\begin{document}

\title[Chain enumeration of $k$-divisible noncrossing partitions]
{Chain enumeration of $k$-divisible noncrossing partitions of classical types}
\author{Jang Soo Kim}
\thanks{The author was supported by the grant ANR08-JCJC-0011.}
\email{kimjs@math.umn.edu}

\begin{abstract}
  We give combinatorial proofs of the formulas for the number of multichains in
  the $k$-divisible noncrossing partitions of classical types with certain
  conditions on the rank and the block size due to Krattenthaler and
  M{\"u}ller. We also prove Armstrong's conjecture on the zeta polynomial of the
  poset of $k$-divisible noncrossing partitions of type $A$ invariant under a
  $180^\circ$ rotation in the cyclic representation.
\end{abstract}

\newtheorem{thm}{Theorem}[section]
\newtheorem{lem}[thm]{Lemma}
\newtheorem{prop}[thm]{Proposition}
\newtheorem{cor}[thm]{Corollary}
\theoremstyle{definition}
\newtheorem{defn}{Definition}[section]
\newtheorem{conj}{Conjecture}[section]
\newtheorem{question}{Question}[section]
\newtheorem{example}{Example}[section]
\theoremstyle{remark}
\newtheorem{remark}{Remark}[section]

\newcommand\notssim[1]{\stackrel{#1}{\not\sim}}
\newcommand\ssim[1]{\stackrel{#1}{\sim}}
\newcommand\taud{\tau_D}
\newcommand\dap{\operatorname{dap}}
\newcommand\cyc{\operatorname{cyc}}
\newcommand\R{\mathbb{R}}
\newcommand\N{\mathbb{Z}^+}
\newcommand\ovpi{\overline{\pi}}
\newcommand\ovsigma{\overline{\sigma}}
\newcommand{\typek}{\operatorname{type}^{(k)}}
\newcommand{\type}{\operatorname{type}}
\newcommand{\typeb}{\overline{\operatorname{type}}}
\newcommand{\ncbb}{\mathfrak{B}}
\newcommand{\ncdd}{\mathfrak{D}}
\newcommand{\nckbb}{\mathfrak{B}^{(k)}}
\newcommand{\nckdd}{\mathfrak{D}^{(k)}}
\newcommand{\nck}{\operatorname{NC}^{(k)}}
\newcommand{\nckb}{\operatorname{NC}^{(k)}_B}
\newcommand{\nckd}{\operatorname{NC}^{(k)}_D}
\newcommand{\pdk}{P^{(k)}_D}
\newcommand{\ovpk}{\overline{P}_D^{(k)}}
\newcommand{\ovp}{\overline{P}}
\newcommand{\tnckd}{\overline{\operatorname{NC}}^{(k)}_D}
\newcommand{\nctk}{\operatorname{NC}^{(2k)}}
\newcommand{\tnc}[2]{\widetilde{\operatorname{NC}}^{(#1)}(#2)}
\newcommand{\nca}{\operatorname{NC}}
\newcommand{\NC}{\operatorname{NC}}
\newcommand{\ncb}{\operatorname{NC}_B}
\newcommand{\ncd}{\operatorname{NC}_D}
\newcommand{\ceil}[1]{\left\lceil #1 \right\rceil}
\newcommand{\floor}[1]{\left\lfloor #1 \right\rfloor}
\newcommand{\rank}{\operatorname{rank}}
\newcommand{\nz}{\operatorname{nz}}
\newcommand{\bk}{\operatorname{bk}}
\newcommand{\ind}{\operatorname{ind}}
\newcommand{\mcc}{\mathbf{c}}
\newcommand{\apa}{\overline{\alpha}}
\newcommand{\apb}{\overline{\beta}}
\newcommand{\nest}{\operatorname{nest}}
\psset{unit=0.5cm, dotsize=5pt, linewidth=0.1pt}

\def\vput#1{\pnode(#1,1){#1} \pscircle*(#1,1){.1} \rput(#1,.5){$#1$}}
\def\vvput#1#2{\pnode(#1,1){#1} \pscircle*(#1,1){.1} \rput(#1,.5){$#2$}}
\def\edge#1#2{\ncarc[arcangle=50]{#1}{#2}}
\def\activevertex#1{\pscircle[linecolor=red](#1,1){.2}}

\def\cput#1#2#3#4{\pnode(#1){#4} \pscircle*(#1){2pt} \uput[#2](#1){$#3$}}
\def\cedge#1#2{\ncline{#1}{#2}}

\newcount\ax \newcount\ay
\newcount\bx \newcount\by
\newcount\cx \newcount\cy
\newcount\dx \newcount\dy
\def\cell(#1,#2)[#3]{
\ax=#2 \ay=#1
\bx=\ax \by=\ay
\cx=\ax \cy=\ay
\dx=\ax \dy=\ay
\advance\bx by-1
\advance\dy by-1
\advance\cx by-1
\advance\cy by-1
\pspolygon(\ax,-\ay)(\bx,-\by)(\cx,-\cy)(\dx,-\dy)(\ax,-\ay)
\rput(\number\cx.5,\number-\cy.5){$#3$}
}

\def\colnum[#1,#2]{\rput[b](#1.5,.2){$#2$}}
\def\rownum[#1,#2]{\rput[r](-.2,-#1.5){$#2$}}


\maketitle

\section{Introduction}
\label{sec:introduction}

For a finite set $X$, a {\em partition} of $X$ is a collection of mutually
disjoint nonempty subsets, called \emph{blocks}, of $X$ whose union is $X$. Let
$\Pi(n)$ denote the poset of partitions of $[n]=\{1,2,\ldots,n\}$ ordered by
refinement, i.e.~$\pi\leq\sigma$ if each block of $\sigma$ is a union of some
blocks of $\pi$. There is a natural way to identify $\pi\in\Pi(n)$ with an
intersection of reflecting hyperplanes of the Coxeter group $A_{n-1}$. With this
observation Reiner \cite{Reiner1997} defined partitions of type $B_n$ as
follows.  A \emph{partition of type $B_n$} is a partition $\pi$ of $[\pm
n]=\{1,2,\ldots,n,-1,-2,\ldots,-n\}$ such that if $B$ is a block of $\pi$ then
$-B=\{-x:x\in B\}$ is also a block of $\pi$, and there is at most one block,
called \emph{zero block}, which satisfies $B=-B$. Let $\Pi_B(n)$ denote the
poset of partitions of type $B_n$ ordered by refinement.

A \emph{noncrossing partition of type $A_{n-1}$}, or simply a noncrossing partition,  is a partition $\pi\in\Pi(n)$ with the following property: if integers $a,b,c$ and $d$ with $a<b<c<d$ satisfy $a,c\in B$ and $b,d\in B'$ for some blocks $B$ and $B'$ of $\pi$, then $B=B'$. 

Let $k$ be a positive integer. 
A noncrossing partition is called \emph{$k$-divisible} if the size of each block is divisible by $k$.
Let $\nca(n)$ (resp.~$\nck(n)$) denote the subposet of $\Pi(n)$ (resp.~$\Pi(kn)$) consisting of the noncrossing partitions (resp.~$k$-divisible noncrossing partitions).

Bessis \cite{Bessis2003}, Brady and Watt \cite{Brady2008} defined the
generalized noncrossing partition poset $\NC(W)$ for each finite Coxeter group
$W$. This definition has the property that $\NC(A_{n-1})$ is isomorphic to
$\nca(n)$. Armstrong \cite{Armstrong} defined the poset $\NC^{(k)}(W)$ of
generalized $k$-divisible noncrossing partitions for each finite Coxeter group
$W$. In this definition we have $\NC^{(1)}(W)\cong \NC(W)$ and
$\NC^{(k)}(A_{n-1})\cong\nck(n)$.

For each classical Coxeter group $W$, we have a combinatorial realization of
$\NC^{(k)}(W)$.  Reiner \cite{Reiner1997} defined the poset
$\ncb(n)\subset\Pi_B(n)$ of noncrossing partitions of type $B_n$, which turned
out to be isomorphic to $\NC(B_n)$. This poset is naturally generalized to the
poset $\nckb(n) \subset\Pi_B(kn)$ of $k$-divisible noncrossing partitions of
type $B_n$. Armstrong \cite{Armstrong} showed that $\nckb(n)\cong
\NC^{(k)}(B_n)$. Athanasiadis and Reiner \cite{Athanasiadis2005} defined the
poset $\ncd(n) \subset\Pi_B(n)$ of noncrossing partitions of type $D_n$ and
showed that $\ncd(n)\cong \NC(D_n)$. Krattenthaler \cite{KrattAnnulus} defined
the poset $\nckd(n) \subset\Pi_B(kn)$ of the $k$-divisible noncrossing
partitions of type $D_n$ using a representation on an annulus and showed that
$\nckd(n)\cong \NC^{(k)}(D_n)$; see also \cite{KrattMuller}. Similarly to
$\nck(n)$, the size of each block of $\pi$ in $\nckb(n)$ or $\nckd(n)$ is
divisible by $k$.

In this paper we are mainly interested in the number of multichains in
$\nck(n)$, $\nckb(n)$ and $\nckd(n)$ with some conditions on the rank and the
block size. By a multichain we mean a chain with possible repetitions.

\begin{defn}
  For a multichain $\pi_1\leq\pi_2\leq\cdots\leq\pi_\ell$ in a graded poset $P$
  with maximum element $\hat1$, the \emph{rank jump vector} of this multichain
  is the vector $(s_1,s_2,\ldots,s_{\ell+1})$, where $s_1=\rank(\pi_1)$,
  $s_{\ell+1}=\rank(\hat1)-\rank(\pi_{\ell})$ and
  $s_i=\rank(\pi_i)-\rank(\pi_{i-1})$ for $2\leq i\leq \ell$.
\end{defn}

We note that all the posets considered in this paper are graded with maximum
element, however, they do not necessarily have a minimum element. Also note that
the sum of the integers in a rank jump vector is always equal to the rank of the
maximum element, which is $n-1$ for $\nca(n)$ and $\nck(n)$, $n$ for $\ncb(n)$,
$\ncd(n)$, $\nckb(n)$, and $\nckd(n)$. 

Edelman \cite[Theorem 4.2]{Edelman1980} showed that, if
$s_1+\cdots+s_{\ell+1}=n-1$, the number of multichains in $\nck(n)$ with rank
jump vector $(s_1, s_2,\ldots, s_{\ell+1})$ is equal to
\begin{equation}
  \label{eq:edelman}
\frac{1}{n} \binom{n}{s_1}\binom{kn}{s_2}\cdots\binom{kn}{s_{\ell+1}}.  
\end{equation}
Modifying Edelman's idea of the proof of \eqref{eq:edelman}, Reiner
\cite{Reiner1997} found an analogous formula for the number of multichains in
$\ncb(n)$ with given rank jump vector. Later, Armstrong found the following
formula \cite[Theorem 4.5.7]{Armstrong} by generalizing Reiner's idea: if
$s_1+\cdots+s_{\ell+1}=n$, the number of multichains in $\nckb(n)$ with rank
jump vector $(s_1, s_2,\ldots, s_{\ell+1})$ is equal to
\begin{equation}
  \label{eq:2}
  \binom{n}{s_1}\binom{kn}{s_2}\cdots\binom{kn}{s_{\ell+1}}.  
\end{equation}
Athanasiadis and Reiner \cite[Theorem 1.2]{Athanasiadis2005} proved that, if
$s_1+\cdots+s_{\ell+1}=n$, the number of multichains in $\ncd(n)$ with rank jump
vector $(s_1, s_2,\ldots, s_{\ell+1})$ is equal to
\begin{equation}
  \label{eq:7}
  2\binom{n-1}{s_1}\binom{n-1}{s_2}\cdots\binom{n-1}{s_{\ell+1}}
+\sum_{i=1}^{\ell+1} \binom{n-1}{s_1}\cdots \binom{n-2}{s_i-2}\cdots\binom{n-1}{s_{\ell+1}}.
\end{equation}
To prove \eqref{eq:7}, they \cite[Lemma 4.4]{Athanasiadis2005} showed the following using incidence algebras and the Lagrange inversion formula:  the number of multichains $\pi_1\leq\pi_2\leq\cdots\leq\pi_\ell$ in $\ncb(n)$ with rank jump vector $(s_1,s_2,\ldots,s_{\ell+1})$ such that $d$ is the smallest integer for which $\pi_d$ has a zero block is equal to
\begin{equation}
  \label{eq:14}
\frac{s_d}{n} \binom{n}{s_1}\binom{n}{s_2} \cdots \binom{n}{s_{\ell+1}}.  
\end{equation}
Since \eqref{eq:14} is simple and elegant, it deserves a combinatorial proof. In this paper we prove a generalization of \eqref{eq:14} combinatorially;  see Lemma~\ref{thm:atha}.

The number of noncrossing partitions with given block sizes has been studied as well. In the literature, for instance \cite{ Armstrong, Athanasiadis1998, Athanasiadis2005}, $\type(\pi)$ for $\pi\in\Pi(n)$ is defined to be the integer partition $\lambda=(\lambda_1,\lambda_2,\ldots,\lambda_\ell)$ where the number of parts of size $i$ is equal to the number of blocks of size $i$ of $\pi$. However, to state the results in a uniform way, we will use the following different definition of $\type(\pi)$.

\begin{defn}\label{def:type}
  The \emph{type} of a partition $\pi\in\Pi(n)$, denoted by $\type(\pi)$, is the
  sequence $(b;b_1,b_2,\ldots,b_n)$ where $b_i$ is the number of blocks of $\pi$
  of size $i$ and $b=b_1+b_2+\cdots+b_n$. The \emph{type} of $\pi\in\Pi_B(n)$,
  denoted by $\type(\pi)$, is the sequence $(b;b_1,b_2,\ldots,b_n)$ where $b_i$
  is the number of unordered pairs $(B,-B)$ of nonzero blocks of $\pi$ of size
  $i$ and $b=b_1+b_2+\cdots+b_n$.  For a partition $\pi$ in either $\Pi(kn)$ or
  $\Pi_B(kn)$, if $\type(\pi)=(b;b_1,b_2,\ldots,b_{kn})$ and the size of each
  block of $\pi$ is divisible by $k$, then we define the \emph{$k$-type}
  $\typek(\pi)$ of $\pi$ to be $(b;b_{k},b_{2k},\ldots,b_{kn})$.
\end{defn}

From now on, whenever we write $(b;b_1,b_2,\ldots,b_n)$, it is automatically
assumed that $b=b_1+b_2+\cdots+b_n$.

Note that if $\type(\pi)=(b;b_1,b_2,\ldots,b_n)$ for $\pi$ in $\nca(n)$
(resp.~$\ncb(n)$), then we have $\sum_{i=1}^n i \cdot b_i =n$
(resp.~$\sum_{i=1}^n i\cdot b_i \leq n$). Furthermore, if
$\type(\pi)=(b;b_1,b_2,\ldots,b_n)$ for $\pi$ in $\ncd(n)$, then $\sum_{i=1}^n
i\cdot b_i =n$ or $\sum_{i=1}^n i\cdot b_i \leq n-2$ because $\pi$ cannot have a
zero block of size $1$.

Kreweras \cite[Theorem 4]{Kreweras1972} proved that, if $\sum_{i=1}^ni\cdot b_i
=n$, the number of $\pi\in\nca(n)$ with $\type(\pi)=(b;b_1,b_2,\ldots,b_n)$ is
equal to
\begin{equation}
  \label{eq:8}
\frac{1}{b}\binom{b}{b_1,b_2,\ldots,b_n}\binom{n}{b-1}.
\end{equation}
Athanasiadis \cite[Theorem 2.3]{Athanasiadis1998} proved that, if
$\sum_{i=1}^ni\cdot b_i \leq n$, the number of $\pi\in\ncb(n)$ with
$\type(\pi)=(b;b_1,b_2,\ldots,b_n)$ is equal to
\begin{equation}
  \label{eq:9}
\binom{b}{b_1,b_2,\ldots,b_n}\binom{n}{b-1}.
\end{equation}
Athanasiadis and Reiner \cite[Theorem 1.3]{Athanasiadis2005} proved that the
number of $\pi\in\ncd(n)$ with $\type(\pi)=(b;b_1,b_2,\ldots,b_n)$ is equal to
\begin{equation}
  \label{eq:10}
\binom{b}{b_1,b_2,\ldots,b_n}\binom{n-1}{b-1},
\end{equation}
if $b_1+2b_2+\cdots+n b_n\leq n-2$, and
\begin{equation}
  \label{eq:11}
2\binom{b}{b_1,b_2,\ldots,b_n}\binom{n-1}{b}
+\binom{b-1}{b_1-1,b_2,\ldots,b_n}\binom{n-1}{b-1},
\end{equation}
if $b_1+2b_2+\cdots+n b_n=n$.

Armstrong \cite[Theorem 4.4.4 and Theorem 4.5.11]{Armstrong} generalized
\eqref{eq:8} and \eqref{eq:9} as follows: if $\sum_{i=1}^ni\cdot b_i =n$, the
number of multichains $\pi_1\leq\pi_2\leq\cdots\leq\pi_{\ell}$ in $\nck(n)$ with
$\typek(\pi_1)=(b;b_1,b_2,\ldots,b_n)$ is equal to
\begin{equation}
  \label{eq:12}
\frac{1}{b}\binom{b}{b_1,b_2,\ldots,b_n}\binom{\ell kn}{b-1},
\end{equation}
and, if $\sum_{i=1}^ni\cdot b_i \leq n$, the number of multichains
$\pi_1\leq\pi_2\leq\cdots\leq\pi_{\ell}$ in $\nckb(n)$ with
$\typek(\pi_1)=(b;b_1,b_2,\ldots,b_n)$ is equal to
\begin{equation}
  \label{eq:13}
\binom{b}{b_1,b_2,\ldots,b_n}\binom{\ell kn}{b}.
\end{equation}
 
Using decomposition numbers, Krattenthaler and M{\"u}ller \cite{KrattMuller}
proved certain very general formulas and as corollaries, they obtained the
following three theorems which generalize all the above known results except
\eqref{eq:14}.  In fact, the results in \cite{KrattMuller} are far more general
than the following three theorems and our generalization of \eqref{eq:14} can
also be obtained from one of those.

\begin{thm}\cite[Corollary 12]{KrattMuller}\label{thm:kratt1}
Let $b,b_1,b_2,\ldots,b_n$ and $s_1,s_2,\ldots,s_{\ell+1}$ be nonnegative integers satisfying $\sum_{i=1}^n b_i=b$, $\sum_{i=1}^n i\cdot b_i=n$, $\sum_{i=1}^{\ell+1} s_i = n-1$ and $s_1=n-b$. Then the number of multichains $\pi_1\leq \pi_2\leq \cdots \leq \pi_{\ell}$ in $\nck(n)$ with rank jump vector $(s_1,s_2,\ldots,s_{\ell+1})$ and $\typek(\pi_1)=(b;b_1,b_2,\ldots,b_n)$ is equal to
$$\frac{1}{b} \binom{b}{b_1,b_2,\ldots,b_n} \binom{kn}{s_2} \cdots \binom{kn}{s_{\ell+1}}.$$
\end{thm}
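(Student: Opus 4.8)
The plan is to give a bijective proof that refines the combinatorial models behind Edelman's formula \eqref{eq:edelman} and Kreweras's formula \eqref{eq:8}, so that a single object simultaneously records the $k$-type of the bottom element and the rank jump vector of the whole multichain. Throughout, the hypothesis $s_1=n-b$ is not a constraint but a bookkeeping identity: a partition with $b$ blocks has rank $n-b$ in $\nck(n)$, so fixing $\typek(\pi_1)=(b;b_1,\ldots,b_n)$ automatically fixes $s_1=\rank(\pi_1)$, and the remaining data satisfies $\sum_{i\ge 2}s_i=b-1$.

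First I would peel off $\pi_1$. A multichain $\pi_1\le\pi_2\le\cdots\le\pi_\ell$ in $\nck(n)$ is the same as the pair consisting of $\pi_1$ together with the multichain $\pi_1\le\pi_2\le\cdots\le\pi_\ell\le\hat1$ in the interval $[\pi_1,\hat1]$; since any coarsening of $\pi_1$ is again $k$-divisible, this interval is the same whether computed in $\nck(n)$ or in the full lattice $\nca(kn)$, and via the Kreweras complement it is a direct product of smaller noncrossing partition lattices whose ranks sum to $b-1$. In principle one could now quote a product version of \eqref{eq:edelman}, but the sum over all ways of splitting each $s_i$ among the factors is unpleasant and it hides the multinomial $\binom{b}{b_1,\ldots,b_n}$. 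Instead I would encode the whole configuration linearly and all at once.

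I would aim for an encoding by a pair $\bigl(w,(S_2,\ldots,S_{\ell+1})\bigr)$, where $w$ is a word of length $b$ with $b_i$ letters equal to $i$, recording the cyclic sequence of block sizes of $\pi_1$ read off from a marked block, and $S_i$ is an $s_i$-subset of the $(i-1)$-st of $\ell$ copies of $[kn]$, recording which ``active slots'' are amalgamated when passing from $\pi_{i-1}$ to $\pi_i$. The content of the proof is then: (i) this map is a bijection onto those pairs satisfying a \L ukasiewicz-type dominance (cycle) condition; and (ii) the cyclic group $\mathbb{Z}/b\mathbb{Z}$ acts freely on the set of all pairs by simultaneously rotating $w$ and relabelling the attached data, with exactly one pair in each orbit meeting the dominance condition. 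Granting (i) and (ii), the number of multichains equals $\tfrac1b$ times the number of all pairs, namely $\tfrac1b\binom{b}{b_1,\ldots,b_n}\binom{kn}{s_2}\cdots\binom{kn}{s_{\ell+1}}$; and since the distribution of the selected slots among the $\ell$ copies of $[kn]$ is precisely the rank jump vector $(s_2,\ldots,s_{\ell+1})$, summing over it recovers Armstrong's \eqref{eq:12} via $\binom{\ell kn}{b-1}=\sum\binom{kn}{s_2}\cdots\binom{kn}{s_{\ell+1}}$, which is a useful sanity check (and a way to import \eqref{eq:12} into the argument if one prefers).

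The hard part will be (ii), the cycle-lemma step producing the factor $\tfrac1b$: one must identify exactly which rotation of $w$ (equivalently, which block of $\pi_1$ must be distinguished) makes the amalgamation process legal, and show that no pair is fixed by a nontrivial rotation. The latter is delicate precisely because $w$ alone can be rotation invariant — e.g.\ when every block of $\pi_1$ has size $k$ — so freeness must be extracted from the interaction between the rotation of $w$ and the forced rotation of the subsets $S_i$; this is also where the constraint $\sum_{i\ge 2}s_i=b-1>0$, hence the existence of a nonempty $S_i$ whenever $b>1$, gets used. Making the dictionary in (i) faithful — matching the noncrossing condition and the refinement relations $\pi_{i-1}\le\pi_i$ with the combinatorics of $w$ and the $S_i$ — is the other place requiring care, and the degenerate case $b=1$ should be checked by hand.
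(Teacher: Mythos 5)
Your overall strategy---encode a multichain by a word $w$ recording the block sizes of $\pi_1$ together with subsets $S_i\subseteq[kn]$ recording the coarsening steps, count all such pairs by $\binom{b}{b_1,\ldots,b_n}\binom{kn}{s_2}\cdots\binom{kn}{s_{\ell+1}}$, and extract the factor $\tfrac1b$ from a cycle-lemma argument---is a recognizable and plausible route, and your bookkeeping (that $s_1=n-b$ is forced and $\sum_{i\ge2}s_i=b-1$) is correct. But as written the proposal is a plan whose entire mathematical content sits in the two claims (i) and (ii) that you explicitly defer. In (i) you never say what the dominance condition is, what an ``active slot'' is, or how $(w,S_2,\ldots,S_{\ell+1})$ reconstructs the chain of refinements $\pi_1\le\cdots\le\pi_\ell$; note that the word of block sizes does not determine $\pi_1$ (already for $n=3$, $k=1$, the partitions $\{\{1,2\},\{3\}\}$ and $\{\{1,3\},\{2\}\}$ give the same cyclic word), so the faithfulness of the dictionary is exactly where the noncrossing structure must enter, and it is absent. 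In (ii) you assert that $\mathbb{Z}/b\mathbb{Z}$ acts freely and that each orbit contains exactly one good pair; your own discussion shows freeness fails for $w$ alone and must be rescued by the induced action on the $S_i$, but you only sketch why this works when every block has size $k$. A general argument (arbitrary periods $d\mid b$ of $w$, and the induced shift of $[kn]$ by the total size of a period of $w$) is needed and is not routine. So there is a genuine gap: the proof of the theorem has not actually been carried out.

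It is worth contrasting this with the paper's proof, which gets the factor $\tfrac1b$ without any cycle lemma and hence never faces the freeness issue. There one counts pairs $(B,\mcc)$ where $B$ is a distinguished block of $\pi_1$---manifestly $b$ times the number of multichains $\mcc$---and Proposition~\ref{thm:4} puts these pairs in bijection with cyclic parenthesizations $(L,R_1,\ldots,R_\ell)\in\ovp(kn,\ell)$ in which $B$ becomes the leftover block not enclosed by any parentheses. The sets $R_i$ play the role of your $S_{i+1}$, but the word $w$ and the dominance condition are replaced by the set $L$ of left parentheses, which by Lemma~\ref{thm:16} is freely determined by the $R_i$'s together with an arbitrary assignment of sizes; the multinomial then appears as $\sum_{j}\binom{b-1}{b_1,\ldots,b_j-1,\ldots,b_n}=\binom{b}{b_1,\ldots,b_n}$, summing over the size $j$ of the distinguished block. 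If you want to salvage your approach, the most economical fix is to replace the rotation action by this marked-block double counting: mark a block instead of choosing a rotation, and the delicate ``no pair is fixed by a nontrivial rotation'' step disappears entirely.
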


\begin{thm}\cite[Corollary 14]{KrattMuller}\label{thm:kratt2}
Let $b,b_1,b_2,\ldots,b_n$ and $s_1,s_2,\ldots,s_{\ell+1}$ be nonnegative integers satisfying $\sum_{i=1}^n b_i=b$, $\sum_{i=1}^n i\cdot b_i\leq n$, $\sum_{i=1}^{\ell+1} s_i = n$ and $s_1=n-b$. 
Then the number of multichains $\pi_1\leq \pi_2\leq \cdots \leq \pi_{\ell}$ in $\nckb(n)$ with rank jump vector $(s_1,s_2,\ldots,s_{\ell+1})$ and $\typek(\pi_1)=(b;b_1,b_2,\ldots,b_n)$ is equal to
$$\binom{b}{b_1,b_2,\ldots,b_n} \binom{kn}{s_2} \cdots \binom{kn}{s_{\ell+1}}.$$
\end{thm}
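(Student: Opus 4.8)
The plan is to reduce the type-$B_n$ statement to a direct bijective/enumerative argument that parallels Edelman's and Reiner's proofs of \eqref{eq:edelman} and \eqref{eq:2}, but keeping track of the block-size data. First I would recall the standard model: a multichain $\pi_1\leq\pi_2\leq\cdots\leq\pi_\ell$ in $\nckb(n)$ is encoded by a sequence of "refinement steps'', and by the theory developed for $\nckb(n)$ such a multichain with rank jump vector $(s_1,\ldots,s_{\ell+1})$ corresponds to choosing, at stage $i$ ($2\le i\le \ell+1$), an $s_i$-element subset of a set of size $kn$ (this is exactly what produces the factors $\binom{kn}{s_i}$ in \eqref{eq:2}), together with the data of $\pi_1$ itself, which accounts for the remaining freedom. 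The point of the refinement is that $s_1=\rank(\pi_1)=n-b$ is forced to equal $n$ minus the number $b$ of (unordered pairs of nonzero) blocks of $\pi_1$, so fixing the $k$-type of $\pi_1$ fixes $s_1$; thus the theorem is really the statement that, after one sums \eqref{eq:13}-style data over the fine structure, the contribution of $\pi_1$ of a fixed $k$-type $(b;b_1,\ldots,b_n)$ to the count is $\binom{b}{b_1,\ldots,b_n}$ and the remaining stages contribute $\binom{kn}{s_2}\cdots\binom{kn}{s_{\ell+1}}$ independently.

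The key steps, in order, would be: (1) set up the bijection between multichains in $\nckb(n)$ starting at a fixed $\pi_1$ and sequences of compatible refinements, showing that once $\pi_1$ is fixed the number of ways to extend to a multichain with the prescribed later rank jumps $s_2,\ldots,s_{\ell+1}$ is $\binom{kn}{s_2}\cdots\binom{kn}{s_{\ell+1}}$ — this is the $k$-divisible type-$B$ analogue of Edelman's "geodesics in the noncrossing partition lattice'' argument and should follow from the structure already used by Armstrong to prove \eqref{eq:2} and \eqref{eq:13}; (2) count the $\pi_1\in\nckb(n)$ with $\typek(\pi_1)=(b;b_1,\ldots,b_n)$: this is $\binom{b}{b_1,\ldots,b_n}\binom{kn}{b}$ by \eqref{eq:13} with $\ell=1$, but we need instead the refined count that pairs with step~(1); (3) reconcile (1) and (2) by observing that the $\binom{kn}{b}$ appearing in \eqref{eq:13} is precisely the "last stage'' factor when $\ell=1$ (there $s_2 = n - s_1 = b$), so stripping it off leaves exactly $\binom{b}{b_1,\ldots,b_n}$ as the count of $\pi_1$'s of given $k$-type "up to the cyclic/rotational placement data''; (4) multiply the contributions from (1) and (3) and check that the hypotheses $\sum i\,b_i\le n$ and $s_1 = n-b$ are exactly the consistency conditions making this product well-defined and nonzero.

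Alternatively — and this is probably the cleaner route given the paper's stated aim of \emph{combinatorial} proofs — I would instead derive Theorem~\ref{thm:kratt2} as a specialization of the generalization of \eqref{eq:14} that the author announces as Lemma~\ref{thm:atha}: that lemma refines the count of multichains in $\ncb(n)$ by the first index $d$ at which a zero block appears, and summing the $k$-divisible analogue over all admissible $d$ (or extracting the $d>\ell$, i.e.\ "no zero block until the end'', stratum and combining with the zero-block strata) should collapse to the product formula of Theorem~\ref{thm:kratt2}. Concretely I would: express the desired count as a sum over the "zero-block birth time'' $d$; plug in the refined formula; and recognize the resulting sum of products of binomial/multinomial coefficients as a Vandermonde-type convolution that telescopes to $\binom{b}{b_1,\ldots,b_n}\binom{kn}{s_2}\cdots\binom{kn}{s_{\ell+1}}$.

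The main obstacle I expect is step~(1)/the bijective heart: carefully proving that, with $\pi_1$ fixed of a prescribed $k$-type, the extensions to a multichain with later rank jumps $s_2,\ldots,s_{\ell+1}$ number exactly $\prod_{i\ge 2}\binom{kn}{s_i}$ \emph{independently of which $\pi_1$ of that type we chose}. In type $A$ this independence is the content of Edelman's theorem and uses the fact that every interval $[\pi_1,\hat1]$ in the noncrossing partition lattice is again a product of smaller noncrossing partition lattices; the type-$B$ version must handle the dichotomy between the zero block and the nonzero blocks (an interval above $\pi_1$ in $\nckb(n)$ is a product of type-$A$ pieces coming from the nonzero block pairs and one type-$B$ piece coming from the zero block), and one has to verify that the relevant product of "$kn$-sized'' choice sets reassembles correctly after this splitting. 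Making that splitting bookkeeping precise — especially tracking how the $k$-divisibility constraint distributes across the factors — is where the real work lies; the subsequent binomial-coefficient manipulations should be routine Vandermonde.
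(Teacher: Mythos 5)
Your preferred (second) route is indeed the paper's: Theorem~\ref{thm:kratt2} is deduced from Lemma~\ref{thm:atha} by summing over the index $d$ at which a zero block first appears, and that summation is even easier than the ``Vandermonde-type convolution'' you anticipate --- if $\sum_i i\cdot b_i<n$ then $\pi_1$ necessarily has a zero block, so only the stratum $d=1$ occurs and the lemma already gives the product formula, while if $\sum_i i\cdot b_i=n$ then $d\ge 2$ and $\sum_{d=2}^{\ell+1}s_d/b=(n-s_1)/b=1$. The genuine gap is that this reduction is the trivial part of the argument, and your plan says nothing about how to prove Lemma~\ref{thm:atha} itself, which is the entire content of the paper's Section~\ref{sec:kb}. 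That proof is not a routine analogue of \eqref{eq:14}: it pushes the multichain through the bijection $\psi$ of Section~\ref{sec:interpr-noncr-part-2}, writing $\psi(\pi_i)=(\sigma_i,x_i)$ with $\sigma_1\le\cdots\le\sigma_\ell$ a multichain in $\nck(n)$; it uses Proposition~\ref{thm:12} to show that all decorations $x_i$ are forced once $x_{d-1}$ and $x_d$ are chosen; it counts the admissible pairs $(x_d,x_{d-1})$ over a fixed type-$A$ multichain (the count is $kn-(s_d-1)$, via the identity $e(B)=|B|-t(B)$); and it multiplies by the type-$A$ enumeration of Theorem~\ref{thm:15}. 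Without some substitute for this, the theorem is not proved.

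Your first route also contains a concrete error. Step (1) asserts that for a \emph{fixed} $\pi_1$ the number of extensions to a multichain with later rank jumps $s_2,\ldots,s_{\ell+1}$ is $\binom{kn}{s_2}\cdots\binom{kn}{s_{\ell+1}}$, independently of $\pi_1$. Taking $\ell=1$ in \eqref{eq:2}, the number of $\pi_1\in\nckb(n)$ of rank $s_1$ is $\binom{n}{s_1}\binom{kn}{b}$, so if each such $\pi_1$ admitted that many extensions the total would exceed \eqref{eq:2} by a factor of $\binom{kn}{b}$. Steps (2)--(4) attempt to repair this by counting the $\pi_1$'s ``up to cyclic/rotational placement data,'' but multiplying a count of equivalence classes of $\pi_1$ by a count of extensions of an actual representative does not enumerate multichains unless one exhibits a genuine bijection, which is exactly what is missing. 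The correct Edelman--Reiner-style argument does not factor through fixing $\pi_1$ at all: it encodes the whole multichain at once as a cyclic parenthesization $(L,R_1,\ldots,R_\ell)$, and the multinomial coefficient $\binom{b}{b_1,\ldots,b_n}$ arises from distributing the prescribed block sizes over the right parentheses as in Lemma~\ref{thm:16}, not from enumerating the possible bottom elements $\pi_1$ of a given type.
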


\begin{thm}\cite[Corollary 16]{KrattMuller}\label{thm:kratt3}
Let $b,b_1,b_2,\ldots,b_n$ and $s_1,s_2,\ldots,s_{\ell+1}$ be nonnegative integers satisfying $\sum_{i=1}^n b_i=b$, $\sum_{i=1}^n i\cdot b_i\leq n$, $\sum_{i=1}^n i \cdot b_i \ne n-1$, $\sum_{i=1}^{\ell+1} s_i = n$ and $s_1=n-b$. 
Then the number of multichains $\pi_1\leq \pi_2\leq \cdots \leq \pi_{\ell}$ in $\nckd(n)$ with rank jump vector $(s_1,s_2,\ldots,s_{\ell+1})$ and $\typek(\pi_1)=(b;b_1,b_2,\ldots,b_n)$ is equal to
$$\binom{b}{b_1,b_2,\ldots,b_n} \binom{k(n-1)}{s_2} \cdots \binom{k(n-1)}{s_{\ell+1}},$$
if $b_1+2b_2+\cdots+n b_n\leq n-2$, and
\begin{multline*}
 2\binom{b}{b_1,b_2,\ldots,b_n} \binom{k(n-1)}{s_2} \cdots \binom{k(n-1)}{s_{\ell+1}}\\
 + \sum_{i=2}^{\ell+1} \frac{s_i-1}{b-1} \binom{b-1}{b_1-1,b_2,\ldots,b_n} 
\binom{k(n-1)}{s_2} \cdots \binom{k(n-1)}{s_i-1} \cdots \binom{k(n-1)}{s_{\ell+1}},
\end{multline*}
if $b_1+2b_2+\cdots+n b_n= n$.
\end{thm}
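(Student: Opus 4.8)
The plan is to prove Theorem~\ref{thm:kratt3} by reducing the type $D$ count to a type $B$ count, exploiting the combinatorial description of $\nckd(n)$ as a subposet of $\Pi_B(kn)$ together with the generalization of \eqref{eq:14} promised as Lemma~\ref{thm:atha}. Recall that the difference between $\nckb(n)$ and $\nckd(n)$ lies entirely in how zero blocks are treated: in type $D$ a zero block must have size at least $2$ (in fact at least $2k$ after the $k$-divisibility condition is imposed), and the annular representation of Krattenthaler distinguishes partitions with a ``small'' zero block (or none at all) from those with a genuine zero block whose positive and negative halves interleave around the annulus. So I would split the multichains $\pi_1\le\cdots\le\pi_\ell$ in $\nckd(n)$ into two classes: those in which no $\pi_i$ has a zero block, and those in which some $\pi_i$ does. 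The first class should biject with multichains in a ``type $A$-like'' structure living on $n-1$ points — this is exactly where the $\binom{k(n-1)}{s_j}$ factors and the single leading multinomial $\binom{b}{b_1,\ldots,b_n}$ come from — and accounts for (part of) both displayed formulas. The factor of $2$ in the second displayed formula reflects the two ways the zero block can wrap the annulus, matching the $2\binom{n-1}{b}$ phenomenon in \eqref{eq:11} and the leading $2\binom{n-1}{s_1}\cdots$ in \eqref{eq:7}.

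For the second class — multichains acquiring a zero block — I would let $d$ be the smallest index such that $\pi_d$ has a zero block, mirroring the setup of \eqref{eq:14}. Once $\pi_d,\ldots,\pi_\ell$ all carry a zero block, the relevant count over the ``tail'' of the chain is governed by a type $B$ computation, while the ``head'' $\pi_1\le\cdots\le\pi_{d-1}$ (no zero block yet) is again type $A$-like on $n-1$ points. The key input here is the generalization of \eqref{eq:14}, Lemma~\ref{thm:atha}: it supplies, combinatorially, the weight $\frac{s_d}{n}\binom{n}{s_1}\cdots$ (or its type-refined analogue with the multinomial $\binom{b}{b_1,\ldots,b_n}$ inserted and the block-size data $(b;b_1,\ldots,b_n)$ prescribed) for the first appearance of a zero block. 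Summing the contribution over the location $d\in\{2,\ldots,\ell+1\}$ of the first zero block, and using the hypothesis $s_1=n-b$ to pin down $\rank(\pi_1)$ and hence the head count, should collapse the sum into $\sum_{i=2}^{\ell+1}\frac{s_i-1}{b-1}\binom{b-1}{b_1-1,b_2,\ldots,b_n}\binom{k(n-1)}{s_2}\cdots\binom{k(n-1)}{s_i-1}\cdots\binom{k(n-1)}{s_{\ell+1}}$. The shift $b_1\mapsto b_1-1$ in the multinomial comes from the zero block absorbing one ``virtual singleton,'' and the index shift $s_i\mapsto s_i-1$ reflects the rank unit consumed when the zero block is created; the condition $\sum i\,b_i = n$ (rather than $\le n-2$) is precisely what forces the zero block to be present in the limiting configuration and thus produces the two-term formula rather than the one-term one.

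Concretely the steps are: (1) set up the annular/combinatorial model for $\nckd(n)$ and its refinement by whether a zero block is present, carefully tracking how $\type^{(k)}$ and the rank jump vector behave under this dichotomy; (2) handle the zero-block-free case by a direct bijection with a type $A$ structure on $n-1$ nodes, yielding $\binom{b}{b_1,\ldots,b_n}\prod_{j\ge2}\binom{k(n-1)}{s_j}$ and, when $\sum i\,b_i=n$ forces wrap-around, the factor $2$; (3) in the zero-block case, condition on the first index $d$ at which a zero block appears, apply the type-refined version of Lemma~\ref{thm:atha} to the head-to-$d$ piece and the Armstrong-type formula \eqref{eq:13} (suitably localized) to the tail, and (4) perform the summation over $d$, verifying that the binomial identities telescope to the stated second term. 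The main obstacle I anticipate is step~(3): getting Lemma~\ref{thm:atha} into exactly the refined form needed here — with the full $k$-type $(b;b_1,\ldots,b_n)$ prescribed and with the rank jumps of the tail decoupled cleanly from those of the head — and then matching the resulting weight $\frac{s_d}{n}$ (or $\frac{s_d-1}{b-1}$ after reindexing) against the target summand, including the delicate bookkeeping of the $b\mapsto b-1$, $b_1\mapsto b_1-1$, and $s_i\mapsto s_i-1$ shifts. The exclusion $\sum i\,b_i\ne n-1$ in the hypotheses should be exactly the degenerate case where a would-be zero block has size $1$, which the model forbids, so that case must be checked to be genuinely vacuous rather than contributing a spurious term.
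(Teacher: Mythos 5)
Your overall architecture --- reduce type $D$ to type $B$, condition on where a zero block first appears, and invoke Lemma~\ref{thm:atha} --- contains several of the right ingredients, but the central decomposition is the wrong one, and the two terms of the formula do not split along the line you draw. You partition the multichains according to whether some $\pi_i$ has a zero block. The paper instead partitions them (in the case $\sum_i i\cdot b_i=n$) according to whether some $\pi_j$ has an \emph{annular} block, i.e.\ a nonzero block meeting both circles of the Krattenthaler annulus. These are genuinely different dichotomies: a chain whose early elements have annular blocks may later acquire a zero block, and a chain with no annular blocks need never acquire one. A concrete failure of your attribution: take $\ell=1$ and $\sum_i i\cdot b_i=n$. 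Then $\typek(\pi_1)=(b;b_1,\ldots,b_n)$ forces the nonzero blocks of $\pi_1$ to exhaust all of $[\pm kn]$, so $\pi_1$ has no zero block and your ``second class'' is empty; yet the second term, which reduces to $\binom{b-1}{b_1-1,b_2,\ldots,b_n}\binom{k(n-1)}{s_2-1}$, is nonzero whenever $b_1\geq 1$. That term in fact counts the elements with \emph{no} annular block (whose inner circle splits into a pair $B,-B$ determined by the outer blocks), not elements with a zero block. Similarly, in the case $\sum_i i\cdot b_i\leq n-2$ every $\pi_j$ has a zero block, so your ``first class'' is empty there, contrary to your claim that it supplies that formula; that case is handled by deleting the inner circle and quoting Theorem~\ref{thm:kratt2} for $\nckb(n-1)$.

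The correct accounting in the case $\sum_i i\cdot b_i=n$ is: (i) multichains with at least one annular element are counted by an explicit bijection ($\tau_D'$, Proposition~\ref{thm:19}) with pairs consisting of a doubly labeled cyclic parenthesization on $k(n-1)$ points and a sign $\epsilon\in\{-1,1\}$; the factor $2$ is this sign, recording the two ways the annular \emph{nonzero} blocks attach to the inner circle, and this class includes chains that do eventually develop zero blocks. (ii) Multichains with no annular element project, by deleting the inner circle, to multichains in $\nckb(n-1)$ with $\typek(\pi_1')=(b-1;b_1-1,b_2,\ldots,b_n)$ and rank jump vector $(s_1,\ldots,s_i-1,\ldots,s_{\ell+1})$, where $i$ is the index of the \emph{projected} chain; here $i$ ranges over $2,\ldots,\ell+1$, with $i=\ell+1$ meaning no zero block ever appears, and Lemma~\ref{thm:atha} applied to the projection yields exactly the $i$-th summand. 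So Lemma~\ref{thm:atha} is indeed the engine behind the sum, but it is applied to the type-$B$ shadows of the annulus-free chains (where the relevant ``zero block'' may never occur), not to type-$D$ chains conditioned on their own zero blocks. Your proposal would have to be reorganized around the annular/non-annular dichotomy, and the bijective count of the annular class supplied, before your steps (3) and (4) could go through.
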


Krattenthaler and M{\"u}ller's proofs of Theorems~\ref{thm:kratt1},
\ref{thm:kratt2} and \ref{thm:kratt3} are not completely combinatorial because
they used the Lagrange-Good inversion formula, see \cite[Theorem 1]{KrattMuller}
or \cite{Good1960}. Especially, in the introduction in \cite{KrattMuller}, they
wrote that Theorems~\ref{thm:kratt1} and \ref{thm:kratt2} seem amenable to
combinatorial proofs, however, to find a combinatorial proof of
Theorem~\ref{thm:kratt3} seems rather hopeless.  In this paper, we will give
combinatorial proofs of Theorems~\ref{thm:kratt1}, \ref{thm:kratt2} and
\ref{thm:kratt3}.

This paper is organized as follows. In Section~\ref{sec:interpr-noncr-part} we
recall the definition of $\ncb(n)$ and $\ncd(n)$. In
Section~\ref{sec:interpr-noncr-part-2} we recall the bijection $\psi$ in
\cite{KimNoncrossing2} between $\ncb(n)$ and the set $\ncbb(n)$ of pairs
$(\sigma,x)$, where $\sigma\in\nca(n)$ and $x$ is either $\emptyset$, an edge of
$\sigma$, or a block of $\sigma$. Here an \emph{edge} of $\sigma$ is a pair
$(i,j)$ of integers with $i<j$ such that $i$ and $j$ are in the same block of
$\sigma$ which does not contain any integer between them.  Then we find a
necessary and sufficient condition for two pairs $(\sigma_1,x_1)\in\ncbb(n)$ and
$(\sigma_2,x_2)\in\ncbb(n)$ to satisfy $\psi^{-1}(\sigma_1,x_1)\leq
\psi^{-1}(\sigma_2,x_2)$ in the poset $\ncb(n)$. This property plays a crucial
role in the proof of Theorem~\ref{thm:kratt3}. In
Section~\ref{sec:k-divis-noncr} we prove Theorem~\ref{thm:kratt1} by modifying
the argument of Edelman \cite{Edelman1980}. For $0<r<k$, we consider the
subposet $\nck(n;r)$ of $\nca(nk+r)$ consisting of the partitions $\pi$ such
that all but one block of $\pi$ have sizes divisible by $k$. Then we prove a
chain enumeration formula for $\nck(n;r)$, which is similar to
Theorem~\ref{thm:kratt1}. In Section~\ref{sec:kb} we prove a generalization of
\eqref{eq:14} and Theorem~\ref{thm:kratt2}. In Section~\ref{sec:armstr-conj} we
prove that the poset $\tnc{2k}{2n+1}$ suggested by Armstrong \cite{Armstrong} is
isomorphic to the poset $\nctk(n;k)$ defined in
Section~\ref{sec:k-divis-noncr}. Using these results, we prove Armstrong's
conjecture on the zeta polynomial of $\tnc{2k}{2n+1}$ and answer the question on
rank-, type-selection formulas \cite[Conjecture 4.5.14 and Open Problem
4.5.15]{Armstrong}. In Section~\ref{sec:kd} we prove Theorem~\ref{thm:kratt3}.
All the arguments in this paper are purely combinatorial.

\section{Noncrossing partitions of classical types}
 \label{sec:interpr-noncr-part}

For integers $n,m$, let $[n,m]=\{n,n+1,\ldots,m\}$ and $[n]=\{1,2,\ldots,n\}$.

 Recall that $\Pi(n)$ denotes the poset of partitions of $[n]$ and $\Pi_B(n)$
 denotes the poset of partitions of type $B_n$.  For simplicity, we will write a
 partition $\pi\in\Pi_B(n)$ in the following way:
$$\pi= \{ \pm \{1,-3,6\}, \{2,4,-2,-4\}, \pm\{5,8\}, \pm\{7\} \},$$ which means 
$$\pi= \{ \{1,-3,6\}, \{-1,3,-6\}, \{2,4,-2,-4\}, \{5,8\}, \{-5,-8\}, \{7\}, \{-7\} \}.$$

The \emph{circular representation} of $\pi\in\Pi(n)$ is the drawing obtained as
follows. Arrange $n$ vertices around a circle which are labeled with the
integers $1,2,\ldots,n$. For each block $B$ of $\pi$, draw the convex hull of
the vertices whose labels are the integers in $B$. For an example, see
Figure~\ref{fig:circular}. It is easy to see that the following definition
coincides with the definition of a noncrossing partition in the introduction:
$\pi$ is a \emph{noncrossing partition} if the convex hulls in the circular
representation of $\pi$ do not intersect.

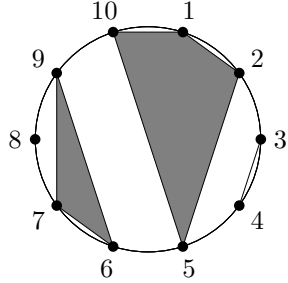
\begin{figure}
  \centering
\psset{unit=.3cm,linewidth=0.1pt}
\begin{pspicture}(-5,-5)(5,5)
  \pscircle(0,0){5}
\psline(5.000,0.0000)(4.045,-2.939)
\pscircle(0,0){5}
\pspolygon[fillstyle=solid,fillcolor=gray](-1.545,4.755)(1.545,4.755)(4.045,2.939)(1.545,-4.755)
\pscircle(0,0){5}
\pspolygon[fillstyle=solid,fillcolor=gray](-1.545,-4.755)(-4.045,-2.939)(-4.045,2.939)
\cput{1.545,4.755}{72.00}{1}{1}
\cput{4.045,2.939}{36.00}{2}{2}
\cput{5.000,0.0000}{0.0000}{3}{3}
\cput{4.045,-2.939}{-36.00}{4}{4}
\cput{1.545,-4.755}{-72.00}{5}{5}
\cput{-1.545,-4.755}{-108.0}{6}{6}
\cput{-4.045,-2.939}{-144.0}{7}{7}
\cput{-5.000,0.0000}{-180.0}{8}{8}
\cput{-4.045,2.939}{-216.0}{9}{9}
\cput{-1.545,4.755}{-252.0}{10}{10}
\end{pspicture}
 \caption{The circular representation of $\{\{1,2,5,10\}, \{3,4\}, \{6,7,9\}, \{8\}\}$.}
  \label{fig:circular}
\end{figure}

Let $\pi\in\Pi_B(n)$. The \emph{type $B$ circular representation} of $\pi$ is
the drawing obtained as follows. Arrange $2n$ vertices on a circle which are
labeled with the integers $1,2,\ldots,n,-1,-2,\ldots,-n$. For each block $B$ of
$\pi$, draw the convex hull of the vertices whose labels are the integers in
$B$. For an example, see Figure~\ref{fig:circular-B}. Note that the type $B$
circular representation of $\pi\in\Pi_B(n)$ is invariant, if we do not concern
the labels, under a $180^\circ$ rotation, and the zero block of $\pi$, if it
exists, corresponds to a convex hull containing the center. Then $\pi$ is a
\emph{noncrossing partition of type $B_n$} if the convex hulls do not intersect.

\begin{figure}
  \centering
\psset{unit=.3cm,linewidth=0.1pt}
  \begin{pspicture}(-5,-5)(5,5)
\pscircle(0,0){5}
\pspolygon[fillstyle=solid,fillcolor=gray](1.545,4.755)(4.045,-2.939)(-1.545,4.755)
\pspolygon[fillstyle=solid,fillcolor=gray](-1.545,-4.755)(-4.045,2.939)(1.545,-4.755)
\pscircle(0,0){5}
\psline(4.045,2.939)(5.000,0.0000)
\psline(-4.045,-2.939)(-5.000,0.0000)
\cput{1.545,4.755}{72.00}{1}{1}
\cput{-1.545,-4.755}{252.0}{-1}{-1}
\cput{4.045,2.939}{36.00}{2}{2}
\cput{-4.045,-2.939}{216.0}{-2}{-2}
\cput{5.000,0.0000}{0.0000}{3}{3}
\cput{-5.000,0.0000}{180.0}{-3}{-3}
\cput{4.045,-2.939}{-36.00}{4}{4}
\cput{-4.045,2.939}{144.0}{-4}{-4}
\cput{1.545,-4.755}{-72.00}{5}{5}
\cput{-1.545,4.755}{108.0}{-5}{-5}
\end{pspicture}
  \begin{pspicture}(-9,-5)(5,5)
\pscircle(0,0){5}
\pspolygon[fillstyle=solid,fillcolor=gray](-1.545,-4.755)(-4.045,2.939)(-1.545,4.755)(1.545,4.755)(4.045,-2.939)(1.545,-4.755)
\pscircle(0,0){5}
\psline(4.045,2.939)(5.000,0.0000)
\psline(-4.045,-2.939)(-5.000,0.0000)
\cput{1.545,4.755}{72.00}{1}{1}
\cput{-1.545,-4.755}{252.0}{-1}{-1}
\cput{4.045,2.939}{36.00}{2}{2}
\cput{-4.045,-2.939}{216.0}{-2}{-2}
\cput{5.000,0.0000}{0.0000}{3}{3}
\cput{-5.000,0.0000}{180.0}{-3}{-3}
\cput{4.045,-2.939}{-36.00}{4}{4}
\cput{-4.045,2.939}{144.0}{-4}{-4}
\cput{1.545,-4.755}{-72.00}{5}{5}
\cput{-1.545,4.755}{108.0}{-5}{-5}
  \end{pspicture}
  \caption{The type $B$ circular representations of $\{\pm\{1,4,-5\},\pm\{2,3\}\}$ and 
$\{\{1,4,5,-1,-4,-5\},\pm\{2,3\}\}$.}
  \label{fig:circular-B}
\end{figure}
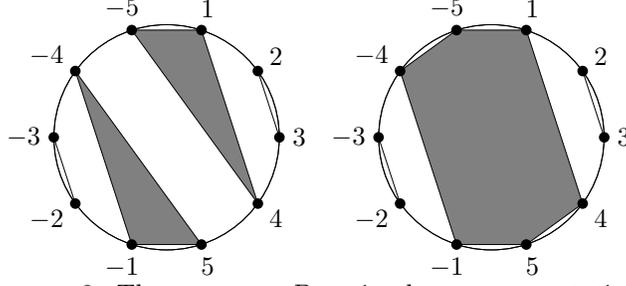

Let $\pi\in\Pi_B(n)$. The \emph{type $D$ circular representation} of $\pi$ is
the drawing obtained as follows. Arrange $2n-2$ vertices labeled with
$1,2,\ldots,n-1,-1,-2,\ldots,-(n-1)$ on a circle and put a vertex labeled with
$\pm n$ at the center.  For each block $B$ of $\pi$, draw the convex hull of the
vertices whose labels are in $B$. Then $\pi$ is a \emph{noncrossing partition of
  type $D_n$} if the convex hulls do not intersect in their interiors and if
there is a zero block $B$ then $\{n,-n\}\subsetneq B$. For an example, see
Figure~\ref{fig:circ-D}.

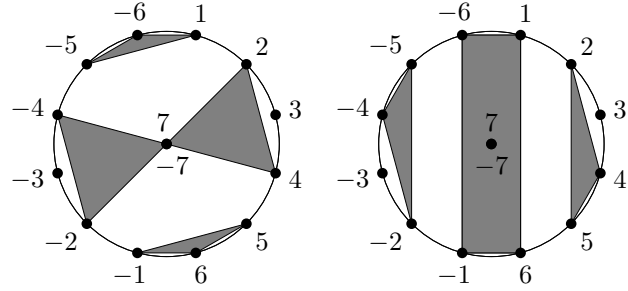
\begin{figure}
  \centering
\psset{unit=.3cm,linewidth=0.1pt}
  \begin{pspicture}(-5,-5)(5,5)
\pscircle[fillstyle=solid](0,0){0}
\pscircle(0,0){5}
\pspolygon[fillstyle=solid,fillcolor=gray](3.536,3.536)(4.830,-1.294)(0.0000,0.0000)
\pspolygon[fillstyle=solid,fillcolor=gray](-3.536,-3.536)(-4.830,1.294)(0.0000,0.0000)
\pscircle[fillstyle=solid](0,0){0}
\pscircle(0,0){5}
\pspolygon[fillstyle=solid,fillcolor=gray](3.536,-3.536)(1.294,-4.830)(-1.294,-4.830)
\pspolygon[fillstyle=solid,fillcolor=gray](-3.536,3.536)(-1.294,4.830)(1.294,4.830)
\cput{1.294,4.830}{75.00}{1}{1}
\cput{-1.294,-4.830}{255.0}{-1}{-1}
\cput{3.536,3.536}{45.00}{2}{2}
\cput{-3.536,-3.536}{225.0}{-2}{-2}
\cput{4.830,1.294}{15.00}{3}{3}
\cput{-4.830,-1.294}{195.0}{-3}{-3}
\cput{4.830,-1.294}{-15.00}{4}{4}
\cput{-4.830,1.294}{165.0}{-4}{-4}
\cput{3.536,-3.536}{-45.00}{5}{5}
\cput{-3.536,3.536}{135.0}{-5}{-5}
\cput{1.294,-4.830}{-75.00}{6}{6}
\cput{-1.294,4.830}{105.0}{-6}{-6}
\cput{0.0000,0.0000}{100}{7}{7}
\cput{0.0000,0.0000}{280}{-7}{-7}  
  \end{pspicture}
 \begin{pspicture}(-9,-5)(5,5)
\pscircle[fillstyle=solid](0,0){0}
\pscircle(0,0){5}
\pspolygon[fillstyle=solid,fillcolor=gray](1.294,4.830)(1.294,-4.830)(-1.294,-4.830)(-1.294,4.830)
\pspolygon[fillstyle=solid,fillcolor=gray](-1.294,-4.830)(-1.294,4.830)(1.294,4.830)(1.294,-4.830)
\pscircle[fillstyle=solid](0,0){0}
\pscircle(0,0){5}
\pspolygon[fillstyle=solid,fillcolor=gray](3.536,3.536)(4.830,-1.294)(3.536,-3.536)
\pspolygon[fillstyle=solid,fillcolor=gray](-3.536,-3.536)(-4.830,1.294)(-3.536,3.536)
\cput{1.294,4.830}{75.00}{1}{1}
\cput{-1.294,-4.830}{255.0}{-1}{-1}
\cput{3.536,3.536}{45.00}{2}{2}
\cput{-3.536,-3.536}{225.0}{-2}{-2}
\cput{4.830,1.294}{15.00}{3}{3}
\cput{-4.830,-1.294}{195.0}{-3}{-3}
\cput{4.830,-1.294}{-15.00}{4}{4}
\cput{-4.830,1.294}{165.0}{-4}{-4}
\cput{3.536,-3.536}{-45.00}{5}{5}
\cput{-3.536,3.536}{135.0}{-5}{-5}
\cput{1.294,-4.830}{-75.00}{6}{6}
\cput{-1.294,4.830}{105.0}{-6}{-6}
\cput{0.0000,0.0000}{90}{7}{7}
\cput{0.0000,0.0000}{270}{-7}{-7}
  \end{pspicture}
  \caption{The type $D$ circular representations of $\{ \pm\{1,-5,-6\}$,
    $\pm\{2,4,-7\}$, $\pm\{3\} \}$ and $\{ \{1,6,7,-1,-6,-7\}$, $\pm\{2,4,5\}$,
    $\pm\{3\} \}$.}
  \label{fig:circ-D}
\end{figure}

Let $\pi\in\Pi(n)$. An \emph{edge} of $\pi$ is a pair $(i,j)$ of integers with
$i<j$ such that $i,j\in B$ for a block $B$ of $\pi$ and there is no other
integer $k$ in $B$ with $i<k<j$. The \emph{standard representation} of $\pi$ is
the drawing obtained as follows. Arrange the integers $1,2,\ldots,n$ on a
horizontal line. For each edge $(i,j)$ of $\pi$, connect the integers $i$ and
$j$ with an arc above the horizontal line. For an example, see
Figure~\ref{fig:standard}. Then $\pi$ is a noncrossing partition if and only if
the arcs in the standard representation do not intersect.

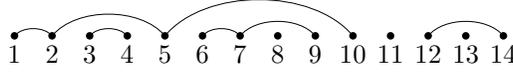
\begin{figure}
  \centering
  \begin{pspicture}(1,0.5)(14,2) \vput{1} \vput{2} \vput{3} \vput{4}  \vput{5} \vput{6} \vput{7} \vput{8} \vput{9} \vput{10} \vput{11} \vput{12} \vput{13} \vput{14} 
    \edge12 \edge25 \edge5{10} \edge34 \edge67 \edge79 \edge{12}{14}
  \end{pspicture}
  \caption{The standard representation of $\{\{1,2,5,10\}$, $\{3,4\}$,
    $\{6,7,9\}$, $\{8\}$, $\{11\}$, $\{12,14\}$, $\{13\}\}$.}
  \label{fig:standard}
\end{figure}

Let $\pi\in\Pi_B(n)$. The \emph{standard representation} of $\pi$ is the drawing
obtained as follows. Arrange the integers $1,2,\ldots,n,-1,-2,\ldots,-n$ on a
horizontal line. Then connect the integers $i$ and $j$ with an arc above the
horizontal line for each pair $(i,j)$ of integers such that $i$ and $j$ are in
the same block $B$ of $\pi$ and there is no other integer in $B$ between $i$ and
$j$ on the horizontal line. For an example, see Figure~\ref{fig:stand-B}. Then
$\pi$ is a noncrossing partition of type $B_n$ if and only if the arcs in the
standard representation do not intersect.

Let $\ncb(n)$ (resp.~$\ncd(n)$) denote the subposet of $\Pi_B(n)$ consisting of
the noncrossing partitions of type $B_n$ (resp.~type $D_n$). Note that for
$\pi\in\ncb(n)$ or $\pi\in\ncd(n)$, we have $\rank(\pi)=n-\nz(\pi)$, where
$\nz(\pi)$ denotes the number of unordered pairs $(B,-B)$ of nonzero blocks of
$\pi$.

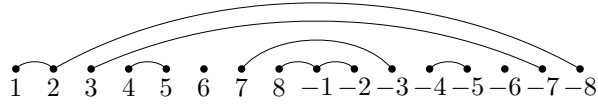
\begin{figure}
  \centering
  \begin{pspicture}(1,0.5)(16,2.5) 
\vput{1} \vput{2} \vput{3} \vput{4}  \vput{5} \vput{6} \vput7 \vput8
\vvput{9}{-1} \vvput{10}{-2} \vvput{11}{-3} \vvput{12}{-4} \vvput{13}{-5} \vvput{14}{-6} \vvput{15}{-7} \vvput{16}{-8} 
\edge12 \edge45
\edge{9}{10} \edge{12}{13}
\ncarc[arcangle=30]{2}{16}
\ncarc[arcangle=25]{3}{15}
\edge{7}{11} \edge89
 \end{pspicture}
\caption{The standard representation of $\{\pm\{1,2,-8\},\pm\{3,-7\}, \pm\{4,5\}, \pm\{6\}\}$.}
  \label{fig:stand-B}
\end{figure}

\section{Interpretation of noncrossing partitions of type $B_n$}
\label{sec:interpr-noncr-part-2}

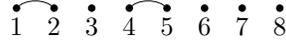
\begin{figure}
  \centering
  \begin{pspicture}(1,0.5)(8,1) 
\vput{1} \vput{2} \vput{3} \vput{4}  \vput{5} \vput{6} \vput7 \vput8
\edge12 \edge45 
\end{pspicture}
\caption{The partition $\eta$ corresponding to the partition $\pi$ in
  Figure~\ref{fig:stand-B}.}
  \label{fig:phi}
\end{figure}

\begin{figure}
  \centering
  \begin{pspicture}(1,0.5)(8,2) 
\vput{1} \vput{2} \vput{3} \vput{4}  \vput{5} \vput{6} \vput7 \vput8
\edge12 \edge45
\edge37 \edge28
\end{pspicture}
\caption{The partition $\sigma$ corresponding to the partition $\pi$ in
  Figure~\ref{fig:stand-B}.}
  \label{fig:psi}
\end{figure}

Let $\ncbb(n)$ denote the set of pairs $(\sigma, x)$, where $\sigma\in\nca(n)$
and $x$ is either $\emptyset$, an edge of $\sigma$, or a block of $\sigma$. Note
that for each $\sigma\in\nca(n)$, we have $n+1$ choices for $x$ with
$(\sigma,x)\in\ncbb(n)$. Thus $\ncbb(n)$ is essentially the same as
$\nca(n)\times[n+1]$.

Let us recall the bijection $\psi:\ncb(n)\rightarrow\ncbb(n)$ in \cite{KimNoncrossing2}.

Given $\pi\in\ncb(n)$, let $\eta$ be the partition of $[n]$ obtained from $\pi$
by removing all the negative integers and let $X$ be the set of blocks $B$ of
$\eta$ such that $\pi$ has a block containing $B$ and at least one negative
integer. Note that $\eta\in\nca(n)$ and each block $B$ in $X$ is
\emph{nonnested} in $\eta$, i.e.~$\eta$ does not have an edge $(i,j)$ with
$i<\min(B)\leq\max(B)<j$. For example, if $\pi$ is the partition in
Figure~\ref{fig:stand-B}, then $\eta$ is the partition in Figure~\ref{fig:phi}
and $X=\{\{1,2\}, \{3\}, \{7\}, \{8\}\}$.

Now assume that $X$ has $k$ blocks $A_1,A_2,\ldots,A_k$ with
$\max(A_1)<\max(A_2)<\cdots<\max(A_k)$. Since all the blocks in $X$ are
nonnested in $\eta$, we have $\max(A_i)<\min(A_{i+1})$ for all $i\in[k-1]$.  Let
$\sigma$ be the partition obtained from $\eta$ by taking the union of $A_i$ and
$A_{k+1-i}$ for all $i=1,2,\ldots,\floor{k/2}$. Let
$$x = \left\{
  \begin{array}{ll}
    \emptyset, & \mbox{if $k=0$;} \\
    (\max(A_{k/2}), \min(A_{k/2+1})), & \mbox{if $k\ne0$ and $k$ is even;}\\
    A_{(k+1)/2}, & \mbox{if $k$ is odd.}
  \end{array} \right. $$
Note that $(\sigma,x)\in\ncbb(n)$. We define $\psi(\pi)$ to be the pair
$(\sigma,x)$.  For example, if $\pi$ is the partition in
Figure~\ref{fig:stand-B}, then $\psi(\pi) = (\sigma,x)$, where $\sigma$ is the
partition in Figure~\ref{fig:psi} and $x$ is the edge $(3,7)$.

The following theorem is proved by the author in \cite{KimNoncrossing2}. For
the sake of self-containedness, we include a short proof.

\begin{thm}\label{thm:simbij} \cite{KimNoncrossing2}
  The map $\psi:\ncb(n)\to\ncbb(n)$ is a bijection. Moreover, for
  $\pi\in\ncb(n)$ with $\type(\pi)=(b;b_1,b_2,\ldots,b_n)$ and
  $\psi(\pi)=(\sigma,x)$, we have $\type(\sigma)=\type(\pi)$ if $\pi$ does not
  have a zero block; and $\type(\sigma)=(b+1;b_1,\ldots,b_i+1,\ldots,b_n)$ if
  $\pi$ has a zero block of size $2i$.
\end{thm}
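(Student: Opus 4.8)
The plan is to prove that $\psi$ is a bijection by exhibiting an explicit inverse, and then track how block sizes transform under the construction. First I would describe the reverse map $\psi^{-1}:\ncbb(n)\to\ncb(n)$. Given $(\sigma,x)\in\ncbb(n)$, the idea is to "unfold" $\sigma$ into a type $B$ partition. If $x=\emptyset$, set $\pi$ to be the partition whose blocks are $B$ and $-B$ for each block $B$ of $\sigma$ (no zero block). If $x$ is an edge $(i,j)$ of $\sigma$, then the block of $\sigma$ containing $i$ and $j$ should be split at that edge into a "left part" $L$ (elements $\le i$) and a "right part" $R$ (elements $\ge j$); in $\pi$ we create the two blocks $L\cup(-R)$ and $R\cup(-L)$, and for every other block $B$ of $\sigma$ we put $B$ and $-B$. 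If $x$ is a block $B_0$ of $\sigma$, then in $\pi$ we make $B_0\cup(-B_0)$ the zero block, and for every other block $B$ we put $B$ and $-B$. The main content is to check that this $\pi$ is genuinely noncrossing of type $B_n$, that its image under $\psi$ recovers $(\sigma,x)$, and conversely. The crux is recognizing how the sequence $A_1,\dots,A_k$ of nonnested blocks of $\eta$ that meet negative integers, arranged by increasing maxima, gets paired up: the outermost pair $(A_1,A_k)$ nests the next pair $(A_2,A_{k-1})$, and so on, so taking the union of $A_i$ with $A_{k+1-i}$ produces exactly the blocks of $\sigma$ that are "broken" or that come from the zero block.

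Next I would verify the bijection claim carefully. The key structural fact is that the blocks of $\pi$ containing a negative integer but not forming a zero block come in pairs $C, -C$ where $C\cap[n]$ and $(-C)\cap[n]$ are two of the nonnested blocks $A_i$; because these $A_i$ are mutually nonnested and linearly ordered by their spans, the pairing forced by the noncrossing condition on $\pi$ is the "rainbow" pairing $A_i\leftrightarrow A_{k+1-i}$, which is precisely what $\psi$ reverses. If $k$ is odd the central block $A_{(k+1)/2}$ must be self-paired, hence gives the zero block; if $k$ is even the innermost gap between $A_{k/2}$ and $A_{k/2+1}$ is where the edge $x$ sits. Checking noncrossingness of $\psi^{-1}(\sigma,x)$ amounts to the observation that nesting an interval inside the gap at an edge, or inside a block, never creates a crossing in the standard representation; this is a short planar argument on arcs. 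I expect this verification to be mostly routine once the inverse is written down correctly — the main obstacle is bookkeeping the three cases ($\emptyset$, edge, block) consistently and making sure the "nonnested" condition is exactly what makes the unfolding well-defined and noncrossing.

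Finally I would establish the type statement. If $\pi$ has no zero block, then $\psi(\pi)=(\sigma,x)$ with $x$ either $\emptyset$ or an edge, and in both subcases $\sigma$ is obtained from $\eta$ by merging the $A_i$ in pairs; since merging two blocks $A_i$ and $A_{k+1-i}$ into one block of $\sigma$ mirrors exactly the merging of $C=A_i\cup(-A_{k+1-i})$-type blocks of $\pi$ into single $\Pi_B$-pairs, one sees that the multiset of sizes of unordered pairs $(B,-B)$ of nonzero blocks of $\pi$ equals the multiset of block sizes of $\sigma$; hence $\type(\sigma)=\type(\pi)$. (Concretely: a nonzero block $B$ of $\pi$ of size $i$ either already is a block $A\subseteq[n]$ of $\eta$ with $|A|=i$, in which case $A$ is a block of $\sigma$ of size $i$; or it is $C = A\cup(-A')$ with $|A|+|A'|=i$, and then $A\cup A'$ is a block of $\sigma$ of size $i$. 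Either way, counting is preserved, and $b$ is unchanged.) If $\pi$ has a zero block of size $2i$, then $k$ is odd and the zero block is $A_{(k+1)/2}\cup(-A_{(k+1)/2})$ with $|A_{(k+1)/2}|=i$; in $\sigma$ this contributes the extra block $A_{(k+1)/2}$ of size $i$, while all other nonzero pairs of $\pi$ correspond as before to blocks of $\sigma$. Hence $\sigma$ has one more block, of size $i$, than the nonzero-pair data of $\pi$, i.e. $\type(\sigma)=(b+1;b_1,\dots,b_i+1,\dots,b_n)$, as claimed.
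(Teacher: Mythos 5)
Your overall strategy---construct an explicit inverse $\psi^{-1}$ and read the type statement off its construction---is exactly the paper's, but your inverse map is wrong in the edge and block cases, and this is not a cosmetic slip. When $x$ is an edge $(a,b)$ of $\sigma$, it is not only the block containing $a$ and $b$ that must be unfolded: \emph{every} block $A$ of $\sigma$ that nests over the edge, i.e.\ with $A\cap[a]\ne\emptyset$ and $A\cap[b,n]\ne\emptyset$, must be cut into $A_1=A\cap[a]$ and $A_2=A\cap[b,n]$ and replaced by $\pm(A_1\cup(-A_2))$. Concretely, take $\sigma=\{\{1,4\},\{2,3\}\}\in\nca(4)$ with $x$ the edge $(2,3)$. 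Your recipe leaves $\{1,4\}$ intact and produces $\{\pm\{2,-3\},\pm\{1,4\}\}$, which is not even in $\ncb(4)$ (the arc from $1$ to $4$ crosses the arc from $3$ to $-2$ in the standard representation); the correct preimage is $\{\pm\{2,-3\},\pm\{1,-4\}\}$, and indeed $\psi$ applied to it returns $(\sigma,(2,3))$. The same defect occurs when $x$ is a block $B$: every block with elements both in $[\min(B)]$ and in $[\max(B),n]$ must be cut, not kept as $\pm A$. Your own analysis of the forward map already shows why: the blocks $A_i\cup A_{k+1-i}$ of $\sigma$ with $i<\lceil k/2\rceil$ all nest over the central edge or central block, and each arose from a block $A_i\cup(-A_{k+1-i})$ of $\pi$; an inverse that outputs $\pm(A_i\cup A_{k+1-i})$ for these cannot recover $\pi$ and destroys noncrossingness.

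Once the inverse is corrected in this way it coincides with the paper's, and the rest of your plan goes through: the rainbow-pairing argument justifies that $\psi\circ\psi^{-1}$ and $\psi^{-1}\circ\psi$ are identities, and the type computation is as you say, since $|A_1\cup(-A_2)|=|A_1\cup A_2|$ preserves block sizes while a zero block of size $2i$ corresponds to the extra block $x$ of size $i$ in $\sigma$.
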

\begin{proof}
  It is sufficient to construct the inverse map $\psi^{-1}$. Let
  $(\sigma,x)\in\ncbb(n)$. Then $\psi^{-1}(\sigma,x)$ is the partition
  $\pi\in\ncb(n)$ determined as follows.
\begin{description}
\item[Case 1] $x$ is the empty set $\emptyset$. Then for each
  block $A$ of $\sigma$, $\pm A$ are blocks of $\pi$.
\item[Case 2] $x$ is an edge $(a,b)$ of $\sigma$. For each block $A$ of
  $\sigma$, let $A_1=A\cap[a]$ and $A_2=A\cap [b,n]$. If $A_1=\emptyset$ or
  $A_2=\emptyset$, then $\pm A$ are blocks of $\pi$. Otherwise, $\pm(A_1 \cup
  (-A_2))$ are blocks of $\pi$.
\item[Case 3] $x$ is a block $B$ of $\sigma$. Then $\pi$ has zero block
  $B\cup(-B)$. For each block $A\ne B$ of $\sigma$, let $A_1=A\cap[\min(B)]$ and
  $A_2=A\cap [\max(B),n]$. If $A_1=\emptyset$ or $A_2=\emptyset$, then $\pm A$
  are blocks of $\pi$. Otherwise, $\pm(A_1 \cup (-A_2))$ are blocks of $\pi$.
\end{description}
It is easy to check that this is the inverse map of $\psi$. 
The `moreover' statement is clear from the construction of $\psi^{-1}$. 
\end{proof}

Now we will find a necessary and sufficient condition for $(\sigma_1,x_1)$,
$(\sigma_2,x_2)\in\ncbb(n)$ to satisfy $\psi^{-1}(\sigma_1,x_1) \leq
\psi^{-1}(\sigma_2,x_2)$ in $\ncb(n)$.

For a partition $\pi$ in $\Pi(n)$ or $\Pi_B(n)$, we write $i\ssim{\pi}j$ if $i$
and $j$ are in the same block of $\pi$ and $i\notssim{\pi}j$ otherwise.  Note
that if $\psi(\pi)=(\sigma,x)$, then we have $i\ssim{\sigma} j$ if and only if
$i\ssim{\pi} j$ or $i\ssim\pi -j$.  The \emph{length} of an edge $(i,j)$ is
defined to be $j-i$. It is straightforward to check the following lemmas.

\begin{lem}\label{lem:t1}
  Let $\psi(\pi)=(\sigma,x)$. Then we have the following properties.
  \begin{enumerate}
  \item $x=\emptyset$ if and only if there is no integer $i$ with $i\ssim\pi -i$
    and there are no positive integers $i,j$ with $i\ssim\pi -j$.
  \item $x$ is an edge if and only if there is no integer $i$ with $i\ssim\pi
    -i$ and there are two positive integers $i,j$ with $i\ssim\pi -j$. In this
    case, there is a unique pair $(a,b)$ of integers $a<b$ such that $b-a$ is
    minimal subject to the condition $a\ssim\pi -b$. Then $(a,b)$ is in fact an
    edge of $\sigma$ and we have $x=(a,b)$.
  \item $x$ is a block if and only if there is an integer $i$ with $i\ssim\pi -i$. In this case, $x$ is the block consisting of the positive integers $i$ with $i\ssim\pi -i$.
 \end{enumerate}
\end{lem}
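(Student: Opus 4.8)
Lemma~\ref{lem:t1} characterizes when the second coordinate $x$ of $\psi(\pi)=(\sigma,x)$ is $\emptyset$, an edge, or a block, in terms of the relation $\ssim\pi$ on $[\pm n]$, and identifies $x$ explicitly in each case.

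The plan is to unwind the explicit definition of $\psi$ given just before the lemma, keeping careful track of the set $X=\{A_1,\dots,A_k\}$ of nonnested blocks of $\eta$ that get a negative integer in $\pi$, ordered by $\max(A_1)<\cdots<\max(A_k)$. The key structural fact, already noted in the construction, is the equivalence $i\ssim\sigma j \iff (i\ssim\pi j \text{ or } i\ssim\pi -j)$ for positive $i,j$; I will use it freely. The trichotomy for $x$ is governed entirely by the parity and size of $k=|X|$: $x=\emptyset \iff k=0$; $x$ is an edge $\iff k$ is even and nonzero; $x$ is a block $\iff k$ is odd. So each of the three items of the lemma amounts to translating ``$k=0$'', ``$k$ even, $k\ne0$'', ``$k$ odd'' into the language of the relation $i\ssim\pi -j$.

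\textbf{Item (1).} Here I would argue that $k=0$ means no block of $\eta$ acquires a negative integer, i.e.\ $\pi$ has no zero block (equivalently no $i$ with $i\ssim\pi -i$) and no nonzero block containing both a positive integer and a negative integer (equivalently no positive $i,j$ with $i\ssim\pi -j$). Conversely, if $k\ge1$ then $A_1$ is a block of $\eta$ lying in some block $B$ of $\pi$ with a negative integer; either $B$ is a zero block, giving some $i$ with $i\ssim\pi-i$, or $B$ is a nonzero block, and since $B$ contains $A_1\subseteq[n]$ and a negative element $-m$ it yields positive $i,j=m$ with $i\ssim\pi-j$.

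\textbf{Items (2) and (3).} For these I would first observe that ``there is an integer $i$ with $i\ssim\pi -i$'' holds precisely when $\pi$ has a zero block, which by the construction of $\psi^{-1}$ (Case~3) is exactly the case $k$ odd; in that case the zero block is $B\cup(-B)$ where $x=B$, so $x$ is the set of positive integers $i$ with $i\ssim\pi-i$, giving (3). For (2), the remaining case is: no zero block, but $k$ even and positive, i.e.\ some nonzero block of $\pi$ mixes signs; I would then show the pair $(a,b)$ with $a<b$, $a\ssim\pi-b$, and $b-a$ minimal is well-defined and equals $(\max(A_{k/2}),\min(A_{k/2+1}))$. The point is that $\sigma$ is built by merging $A_i$ with $A_{k+1-i}$; the ``innermost'' merge is $A_{k/2}$ with $A_{k/2+1}$, and in $\pi$ this corresponds to $\max(A_{k/2})\ssim\pi -\min(A_{k/2+1})$ with nothing strictly between them (by the nonnestedness and the ordering $\max(A_1)<\cdots<\max(A_k)$, so $\max(A_{k/2})<\min(A_{k/2+1})$), and any other sign-mixing pair is ``outer'', hence of strictly larger length. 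That $(a,b)$ is an edge of $\sigma$ follows because $a=\max(A_{k/2})$ and $b=\min(A_{k/2+1})$ are consecutive elements of the merged block $A_{k/2}\cup A_{k/2+1}$ of $\sigma$.

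\textbf{Main obstacle.} The only delicate point is the minimality/uniqueness of the pair $(a,b)$ in item (2): one must rule out, for a sign-mixing pair $a\ssim\pi -b$, the existence of a shorter such pair coming from a \emph{different} block of $\pi$ or from a non-adjacent pair within the same block. This is where the precise ordering of the $A_i$ by their maxima, together with the fact that each $A_i$ is nonnested in $\eta$ (so the $A_i$ occupy disjoint, linearly ordered intervals of $[n]$), does the work: any positive $i$ with $i\ssim\pi -j$ for some positive $j$ must have $i\in A_p$ and $j\in A_q$ with $p+q=k+1$, and among all such pairs the length $j-i$ is minimized exactly when $\{p,q\}=\{k/2,\,k/2+1\}$ and $i,j$ are taken as close as possible, namely $i=\max(A_{k/2})$, $j=\min(A_{k/2+1})$. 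I expect the write-up of this to be a short case-check once the interval structure of $X$ is spelled out; the rest is routine bookkeeping. As the lemma is stated as ``straightforward to check,'' I would keep the proof brief, emphasizing the interval picture and the parity of $k$.
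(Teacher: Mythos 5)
The paper gives no proof of this lemma at all---it is dismissed with ``it is straightforward to check the following lemmas''---and your unwinding of the definition of $\psi$ via the size and parity of $k=|X|$ is exactly the intended argument; the trichotomy, the identification of the zero block with $A_{(k+1)/2}$, and the minimality/uniqueness of the pair $(\max(A_{k/2}),\min(A_{k/2+1}))$ all check out. The only point worth making explicit in a write-up is the fact that the sign-mixing blocks of $\pi$ pair $A_p$ with $-A_{k+1-p}$, which does not follow from the definition of $\sigma$ alone (that merging is just a definition) but is cleanly obtained from the explicit form of $\psi^{-1}$ in Theorem~3.1, as you already do for item~(3).
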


\begin{lem}\label{lem:t2}
  Let $\psi(\pi)=(\sigma,x)$. For any integers $i,j$ with $i<j$ and
  $i\ssim{\sigma} j$, we have the following properties.
  \begin{enumerate}
  \item If $x=\emptyset$, then $i\ssim{\pi}j$ and $i\notssim{\pi}-j$.
  \item If $x$ is an edge $(a,b)$ of $\sigma$, then $i\notssim\pi j$ and $i\ssim\pi -j$ if $i\leq a<b\leq j$; and $i\ssim\pi j$ and $i\notssim\pi -j$ otherwise.
  \item If $x$ is a block $B$ of $\sigma$, then $i\ssim\pi j$ and $i\ssim\pi -j$
    if $\{i,j\}\subset B$; $i\notssim\pi j$ and $i\ssim\pi -j$ if
    $\{i,j\}\not\subset B$ and $i< \min B\leq \max B< j$; and $i\ssim\pi j$ and
    $i\notssim\pi -j$ otherwise.
\end{enumerate}
\end{lem}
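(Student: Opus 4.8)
The plan is to read the conclusion off directly from the explicit description of $\psi^{-1}$ given in the proof of Theorem~\ref{thm:simbij}. Write $\pi=\psi^{-1}(\sigma,x)$ and let $A$ be the block of $\sigma$ containing both $i$ and $j$. In each case the blocks of $\pi$ produced from $A$ are either $\pm A$ or $\pm(A_1\cup(-A_2))$ for the explicit splitting $A=A_1\sqcup A_2$ of the construction (in Case~3, when $A$ equals the distinguished block $B$, the relevant block of $\pi$ is instead the zero block $B\cup(-B)$). Since the blocks of $\pi$ obtained from distinct blocks of $\sigma$ are pairwise disjoint, it suffices to locate $i$, $j$ and $-j$ among them, which is what I would do.

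First I would record the structural fact that makes the splitting well behaved. If $x$ is the edge $(a,b)$, then because $(a,b)$ is an edge of the noncrossing partition $\sigma$, every block of $\sigma$ is either contained in $\{a+1,\dots,b-1\}$ or disjoint from it: a block with an element strictly between $a$ and $b$ and also an element outside that range would cross the block of $\sigma$ containing $a$ and $b$. Likewise, if $x$ is the block $B$, then every block $A\ne B$ of $\sigma$ is either contained in $\{\min B+1,\dots,\max B-1\}$ or disjoint from $\{\min B,\dots,\max B\}$, by the same crossing argument against $B$. Hence, for the block $A$ containing $i$ and $j$: either $A$ lies strictly inside the cut interval, in which case $\pm A$ are blocks of $\pi$ and we get $i\ssim{\pi}j$, $i\notssim{\pi}-j$; or $A$ avoids the cut interval, so that $A=A_1\sqcup A_2$ with $A_1=A\cap[a]$ (resp.\ $A\cap[\min B]$) to the left of $A_2=A\cap[b,n]$ (resp.\ $A\cap[\max B,n]$).

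It then remains to split on which of $A_1,A_2$ contain $i$ and $j$. Since $i<j$ and $A_1$ lies entirely to the left of $A_2$, the only possibilities are $i,j\in A_1$, or $i,j\in A_2$, or $i\in A_1$ and $j\in A_2$. In the last case the hypotheses $i\le a<b\le j$ (Case~2) or $i<\min B\le\max B<j$ (Case~3) hold, and $i$ and $-j$ both lie in $A_1\cup(-A_2)$ while $j$ lies in $-(A_1\cup(-A_2))$, giving $i\notssim{\pi}j$ and $i\ssim{\pi}-j$. In the other two cases one checks that $i$ and $j$ lie in a common block of $\pi$ (namely $A_1\cup(-A_2)$ or its negative) while $-j$ lies in the opposite one, and that the defining inequalities $i\le a<b\le j$ / $i<\min B\le\max B<j$ fail, so we are in the ``otherwise'' alternative, as required. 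Case~1 is immediate, and in Case~3 one additionally disposes of the subcase $A=B$ directly: then $\{i,j\}\subset B$ and the zero block $B\cup(-B)$ contains $i$, $j$ and $-j$.

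The argument is entirely routine; the only point that needs a little care is the bookkeeping in Case~3, where the distinguished block $B$ produces the zero block rather than a pair $\pm(\cdots)$, and where one must confirm that the dichotomy ``$A$ inside / $A$ outside the interval $\{\min B,\dots,\max B\}$'' is forced by noncrossingness exactly as for an edge. I do not anticipate any real obstacle.
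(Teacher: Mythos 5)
Your proof is correct, and it is essentially the verification the paper intends: the paper offers no argument for this lemma beyond declaring it "straightforward to check," and the natural check is exactly what you do — read the three cases off the explicit description of $\psi^{-1}$ from the proof of Theorem~\ref{thm:simbij}, using noncrossingness to force the dichotomy between a block lying inside the cut interval (giving $\pm A$) and a block split as $A_1\sqcup A_2$ (giving $\pm(A_1\cup(-A_2))$). No gaps; the handling of the zero-block subcase and of the two-sided splitting is exactly the bookkeeping that needs to be done.
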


\begin{prop}\label{thm:12}
  Let $\psi(\pi_1)=(\sigma_1,x_1)$ and $\psi(\pi_2)=(\sigma_2,x_2)$. Then
  $\pi_1\leq\pi_2$ if and only if $\sigma_1\leq\sigma_2$ and one of the
  following conditions holds:
 \begin{enumerate}
  \item\label{item:1} $x_1=x_2=\emptyset$.
\item\label{item:3} $x_2$ is an edge $(a,b)$ of $\sigma_2$ and $x_1$ is the unique minimal length edge $(i,j)$ of $\sigma_1$ with $i\leq a<b\leq j$ if such an edge exists; and $x_1=\emptyset$ otherwise. 
\item\label{item:2} $x_2$ is a block of $\sigma_2$ and $x_1$ is a block of
  $\sigma_1$ with $x_1\subset x_2$.
  \item\label{item:5} $x_2$ is a block of $\sigma_2$ and $x_1$ is an edge $(i,j)$ of $\sigma_1$ with $i,j\in x_2$.
  \item $x_2$ is a block of $\sigma_2$ and $x_1$ is the minimal length edge $(i,j)$ of $\sigma_1$ with $i<\min(x_2)\leq\max(x_2)<j$ if such an edge exists; and $x_1=\emptyset$ otherwise.
 \end{enumerate}
\end{prop}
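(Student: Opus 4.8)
The plan is to prove the equivalence by unwinding the construction of $\psi^{-1}$ and its local description in Lemmas~\ref{lem:t1} and \ref{lem:t2}, reducing the partition-order condition $\pi_1\leq\pi_2$ in $\ncb(n)$ to statements about when $i\ssim{\pi_1}j$ forces $i\ssim{\pi_2}j$. Recall that $\pi_1\leq\pi_2$ in $\Pi_B(n)$ means every block of $\pi_2$ is a union of blocks of $\pi_1$, equivalently: for all $i,j\in[\pm n]$, if $i\ssim{\pi_1}j$ then $i\ssim{\pi_2}j$. Since $i\ssim{\sigma}j$ holds exactly when $i\ssim{\pi}j$ or $i\ssim{\pi}-j$, the first observation is that $\pi_1\leq\pi_2$ implies $\sigma_1\leq\sigma_2$; this is immediate because $i\ssim{\sigma_1}j$ gives $i\ssim{\pi_1}j$ or $i\ssim{\pi_1}-j$, and either way $i\ssim{\pi_2}(\pm j)$, hence $i\ssim{\sigma_2}j$. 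So throughout we may assume $\sigma_1\leq\sigma_2$ and focus on pinning down the relationship between $x_1$ and $x_2$.

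Next I would argue that $x_2$ must be "at least as large" as $x_1$ in the evident sense, by a case analysis on the type of $x_2$ ($\emptyset$, edge, block), using Lemma~\ref{lem:t1} to read off the sign-structure of $\pi_2$. The key mechanism: $\pi_1\leq\pi_2$ forbids "undoing" any negative adjacency, so if $\pi_1$ already has some $i\ssim{\pi_1}-i$ (i.e.\ $x_1$ is a block) then $\pi_2$ must also have a zero block, so $x_2$ is a block; and if $\pi_1$ has positive $i,j$ with $i\ssim{\pi_1}-j$ (i.e.\ $x_1$ is an edge) then either $\pi_2$ has such a pair or $\pi_2$ has a zero block, so $x_2$ is an edge or a block. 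This produces the five admissible pairings $(x_1,x_2)$ listed. Within each case, I would then verify the precise combinatorial description of $x_1$ in terms of $x_2$ and $\sigma_1$: for instance in case~\eqref{item:3}, if $x_2=(a,b)$ then by Lemma~\ref{lem:t2}(2) applied to $\pi_2$ we have $a\ssim{\pi_2}-b$, and since $\sigma_1\leq\sigma_2$ and $a<b$ lie in a common block of $\sigma_2$, there is a (possibly empty) $\sigma_1$-chain linking them; the minimal-length $\sigma_1$-edge $(i,j)$ with $i\le a<b\le j$, if it exists, is forced to be the one with $i\ssim{\pi_1}-j$ by the definition of $\psi$ (the unique minimal-length negative adjacency becomes $x_1$), and if no such $\sigma_1$-edge exists then no negative adjacency of $\pi_1$ can straddle the gap, forcing $x_1=\emptyset$. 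The block cases \eqref{item:2}, \eqref{item:5}, and the last case are handled similarly using Lemma~\ref{lem:t2}(3), noting that the positive integers $i$ with $i\ssim{\pi_1}-i$ form the block $x_1$ and must all land in the zero block of $\pi_2$, i.e.\ in $x_2$.

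For the converse, I would assume $\sigma_1\leq\sigma_2$ together with one of the five conditions and check directly that $i\ssim{\pi_1}j \implies i\ssim{\pi_2}j$ for all $i,j\in[\pm n]$. It suffices to treat $i,j$ both positive or $i$ positive and $j$ negative (replacing a block $B$ by $-B$ as needed), and in each of those subcases the sign-pattern of $\pi_1$ at $(i,j)$ and of $\pi_2$ at $(i,j)$ is completely determined by Lemma~\ref{lem:t2} from $\sigma_1\leq\sigma_2$ and the shape/position of $x_1$ relative to $x_2$; the verification becomes a finite check that the "straddling" region cut out by $x_1$ in $\sigma_1$ is contained in the one cut out by $x_2$ in $\sigma_2$, which is exactly what each of the five conditions guarantees (e.g.\ in case~\eqref{item:3}, the interval $[i,j]$ straddled by the $\sigma_1$-edge $x_1$ contains $[a,b]$, so anything $\pi_1$ "flips" is also "flipped" by $\pi_2$).

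The main obstacle I anticipate is bookkeeping rather than conceptual: making sure the case analysis is exhaustive and that the boundary behavior is right — in particular the delicate point in cases~\eqref{item:3} and the final case that $x_1$ is forced to be \emph{the minimal-length} straddling edge (not just some straddling edge), which relies on how $\psi$ selects the innermost negative adjacency; and, dually, handling the "otherwise $x_1=\emptyset$" branches where one must certify that $\pi_1$ has \emph{no} negative adjacency at all in the relevant region. I would organize the proof so that Lemmas~\ref{lem:t1} and \ref{lem:t2} do all the local work, leaving only the inclusion-of-regions comparison to be spelled out.
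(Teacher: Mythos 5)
Your proposal follows essentially the same route as the paper: reduce $\pi_1\leq\pi_2$ to implications between the relations $\ssim{\pi_i}$, note that $\pi_1\leq\pi_2$ forces $\sigma_1\leq\sigma_2$, and then pin down $x_1$ in terms of $x_2$ case by case via Lemmas~\ref{lem:t1} and \ref{lem:t2} (your treatment of the case $x_2=(a,b)$, establishing that $i\ssim{\pi_1}-j$ iff $i\ssim{\sigma_1}j$ and $i\leq a<b\leq j$ and then invoking the minimality in Lemma~\ref{lem:t1}, is exactly the one case the paper writes out). The paper likewise leaves the remaining cases as a ``straightforward verification,'' so your plan is a faithful, correct match.
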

\begin{proof}
  This is a straightforward verification using the above two lemmas. For
  instance, let us prove one direction in the second case.

  Assume that $\pi_1\leq\pi_2$ and $x_2$ is an edge $(a,b)$ of
  $\sigma_2$. Clearly, we have $\sigma_1\leq\sigma_2$.  For two positive
  integers $i,j$ with $i<j$, we claim that $i\ssim{\pi_1} -j$ if and only if
  $i\ssim{\sigma_1} j$ and $i\leq a<b\leq j$.

  Let $i\ssim{\pi_1} -j$.  Then we have $i\ssim{\sigma_1} j$.  Since
  $\pi_1\leq\pi_2$, we have $i\ssim{\pi_2} -j$ and by Lemma~\ref{lem:t1} we get
  $i\leq a<b\leq j$.  Conversely, let $i\ssim{\sigma_1} j$ and $i\leq a<b\leq
  j$. Since $\sigma_1\leq\sigma_2$, we have $i\ssim{\sigma_2} j$ and by
  Lemma~\ref{lem:t2} we get $i\notssim{\pi_2} j$ and $i\ssim{\pi_2} -j$. On the
  other hand, $i\ssim{\sigma_1} j$ implies $i\ssim{\pi_1} j$ or $i\ssim{\pi_1}
  -j$. Since $\pi_1\leq\pi_2$, we cannot have $i\ssim{\pi_1} j$. Thus we get
  $i\ssim{\pi_1} -j$.

  Thus we have $i\ssim{\pi_1} -j$ if and only if $i\ssim{\sigma_1} j$ and $i\leq
  a<b\leq j$. By Lemma~\ref{lem:t1}, $x_1$ is the minimal length edge $(i,j)$ of
  $\sigma_1$ subject to $i\ssim{\pi_1} -j$, if such an edge exists; and
  $x_1=\emptyset$ otherwise. Thus we are done.

The other cases can be shown similarly.
\end{proof}

\section{$k$-divisible noncrossing partitions of type $A$}
\label{sec:k-divis-noncr}

Let $k$ be a positive integer.  A noncrossing partition $\pi\in\nca(kn)$ is
\emph{$k$-divisible} if the size of each block is divisible by $k$. Let
$\nck(n)$ denote the subposet of $\nca(kn)$ consisting of $k$-divisible
noncrossing partitions.  Then $\nck(n)$ is a graded poset with rank function
$\rank(\pi)=n-\bk(\pi)$, where $\bk(\pi)$ is the number of blocks of $\pi$.

In order to prove \eqref{eq:edelman}, Edelman \cite{Edelman1980} found a
bijection between the set of pairs $(\mcc, a)$ of a multichain $\mcc:\pi_1\leq
\pi_2\leq \cdots \leq \pi_{\ell+1}$ in $\nck(n)$ with rank jump vector
$(s_1,s_2,\ldots,s_{\ell+1})$ and an integer $a\in[n]$ and the set of
$(\ell+1)$-tuples $(L, R_1,R_2,\ldots,R_\ell)$ with $L\subset [n]$, $|L|=n-s_1$,
$R_i\subset [kn]$, and $|R_i|=s_{i+1}$ for $i\in[\ell]$. This bijection has been
extended to the noncrossing partitions of type $B_n$ \cite{Armstrong,
  Reiner1997} and type $D_n$ \cite{Athanasiadis2005}.

In this section we prove Theorem~\ref{thm:kratt1} by modifying the idea of Edelman.
Let us first introduce several notations. 

\subsection{The cyclic parenthesization}

Let $P(n)$ denote the set of pairs $(L,R)$ of subsets $L,R\subset [n]$ with the
same cardinality.  Let $(L,R)\in P(n)$. We can identify $(L,R)$ with the
\emph{cyclic parenthesization} of $(L,R)$ defined as follows.  We place a left
parenthesis before the occurrence of $i$ for each $i\in L$ and a right
parenthesis after the occurrence of $i$ for each $i\in R$ in the sequence
$1,2,\ldots,n$. We consider this sequence in cyclic order.

For  $x\in R$, the \emph{size of $x$} is defined to be the number of integers enclosed by $x$ and its corresponding left parenthesis, which are not enclosed by any other matching pair of parentheses. The \emph{type} of $(L,R)$, denoted by $\type(L,R)$, is defined to be $(b;b_1,b_2,\ldots,b_n)$, where $b_i$ is the number of $x\in R$ whose sizes are equal to $i$ and $b=b_1+b_2+\cdots+b_n$. 

\begin{example}\label{ex:1}
Let $(L,R)=(\{2,3,9,11,15,16\}, \{1,4,5,8,9,12\})\in P(16)$. Then the cyclic parenthesization is the following:
\begin{equation}
  \label{eq:16}
1) \:\: (2 \:\: (3 \:\: 4) \:\: 5) \:\: 6 \:\: 7 \:\: 8) \:\: (9) \:\: 10 \:\: (11 \:\: 12) \:\: 13 \:\: 14 \:\: (15 \:\: (16
\end{equation}
Since we consider \eqref{eq:16} in cyclic order, the right parenthesis of $1$ is matched with the left parenthesis of $16$ and the right parenthesis of $8$ is matched with the left parenthesis of $15$. The sizes of $5$ and $8$ in $R$ are $2$ and $4$ respectively. We have $\type(L,R)=(6;1,4,0,1,0,\ldots,0)$. 
\end{example}

Let $\ovp(n)$ denote the set of elements $(L,R)\in P(n)$ such that if
$\type(L,R)=(b;b_1,b_2,\ldots,b_n)$ then $\sum_{i=1}^n i b_i <n$. Thus we have
$(L,R)\in\ovp(n)$ if and only if there is at least one integer in the cyclic
parenthesization of $(L,R)$ which is not enclosed by any matching pair of
parentheses.

We define a map $\tau$ from $\ovp(n)$ to the set of pairs $(B,\pi)$, where
$\pi\in\nca(n)$ and $B$ is a block of $\pi$ as follows. Let $(L,R)\in
\ovp(n)$. Find a matching pair of parentheses in the cyclic parenthesization of
$(L,R)$ which do not enclose any other parenthesis. Remove the integers enclosed
by these parentheses, and make a block of $\pi$ with these integers. Repeat this
procedure until there is no parenthesis. Since $(L,R)\in\ovp(n)$, we have at
least one remaining integer after removing all the parentheses. These integers
also form a block of $\pi$ and $B$ is defined to be this block.

\begin{example}
  Let $(L,R)$ be the pair in Example~\ref{ex:1}. Note that
  $(L,R)\in\ovp(16)$. Then $\tau(L,R)=(B,\pi)$, where 
$$\pi = \{\{1,16\}, \{2,5\}, \{3,4\}, \{6,7,8,15\}, \{9\}, \{11,12\},
\{10,13,14\}\}$$ and $B=\{10,13,14\}$.
\end{example}

\begin{prop}
  The map $\tau$ is a bijection between $\ovp(n)$ and the set of pairs $(B,\pi)$, where $\pi\in\nca(n)$ and $B$ is a block of $\pi$. Moreover, if $\tau(L,R)=(B,\pi)$, $\type(\pi)=(b;b_1,b_2,\ldots,b_n)$ and $|B|=j$, then $\type(L,R)=(b-1,b_1,\ldots,b_j-1,\ldots,b_n)$. 
\end{prop}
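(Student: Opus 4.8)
The plan is to show that $\tau$ is a bijection by constructing its inverse explicitly, and then to track how the type is affected.

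First I would describe the inverse map. Given a pair $(B,\pi)$ with $\pi\in\nca(n)$ and $B$ a block of $\pi$, I want to produce $(L,R)\in\ovp(n)$. The idea is that each non-$B$ block $A$ of $\pi$ should become a matching pair of parentheses enclosing exactly the integers of $A$ (but not the integers of any block nested strictly inside $A$), while the block $B$ contributes \emph{no} parentheses — its integers are precisely the ones left unenclosed. Concretely, for a block $A\neq B$, since $\pi$ is noncrossing, the integers of $A$ partition into maximal "runs" in cyclic order once the integers of blocks nested inside $A$ are removed; one declares the smallest surviving integer of $A$ to carry a left parenthesis and the largest surviving integer of $A$ to carry a right parenthesis. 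Doing this for every block $A\neq B$ gives sets $L,R\subset[n]$ of equal cardinality $\bk(\pi)-1$, and because $B$ is nonempty the resulting cyclic parenthesization has at least one unenclosed integer, so $(L,R)\in\ovp(n)$. I would then verify $\tau\circ(\text{inverse})=\mathrm{id}$ and $(\text{inverse})\circ\tau=\mathrm{id}$: the key point in both directions is that the recursive "remove an innermost matching pair" procedure in the definition of $\tau$ recovers exactly the blocks, and that the noncrossing condition guarantees the parenthesization is well-formed (every left parenthesis has a unique matching right parenthesis in cyclic order) and that distinct blocks never interleave.

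I expect the main obstacle to be the bookkeeping around cyclicity and nesting: one must make precise what "the integers enclosed by a matching pair, not enclosed by any other matching pair" means, check that the recursive removal in $\tau$ is confluent (the resulting partition does not depend on the order in which innermost pairs are removed), and — most delicately — confirm that for $(L,R)\in\ovp(n)$ the output really is \emph{noncrossing}. The last point follows because two blocks produced by $\tau$ come from two matching pairs of parentheses, and in a well-formed parenthesization two matching pairs are either disjoint or nested, which translates exactly into the noncrossing condition on the circular representation; the block $B$, consisting of the unenclosed integers, is likewise noncrossing with all the others since those integers lie "outside" every matching pair. I would phrase this via the circular representation on $n$ vertices so that "enclosed by" literally becomes "inside the convex hull."

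Finally, for the "moreover" statement: by the construction of the inverse, each block $A\neq B$ of $\pi$ of size $i$ contributes exactly one element of $R$ whose size (in the sense defined for cyclic parenthesizations, i.e.\ the number of integers it and its partner enclose but no inner pair encloses) equals $i$, while $B$ contributes nothing to $R$. Hence if $\type(\pi)=(b;b_1,\ldots,b_n)$ and $|B|=j$, the multiset of sizes recorded by $R$ is obtained from $(b_1,\ldots,b_n)$ by deleting one part equal to $j$, giving $\type(L,R)=(b-1;b_1,\ldots,b_j-1,\ldots,b_n)$, as claimed. This last part is essentially immediate once the inverse map has been set up correctly, so the real work is concentrated in the well-definedness and bijectivity verification described above.
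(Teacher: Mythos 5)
Your overall strategy is the same as the paper's: prove bijectivity by exhibiting the inverse map explicitly, sending $(B,\pi)$ to the cyclic parenthesization in which each block $A\neq B$ contributes one matching pair of parentheses and $B$ consists of the unenclosed integers; the ``moreover'' part then falls out of the construction exactly as you say. However, the one step you make fully explicit is stated incorrectly, and it is precisely the cyclic bookkeeping you yourself flagged as delicate. You place the left parenthesis before the \emph{smallest} integer of $A$ and the right parenthesis after the \emph{largest}. That is only correct when $A$ lies in a linear interval that does not wrap around relative to $B$. Take the paper's Example: $\pi=\{\{1,16\},\{2,5\},\{3,4\},\{6,7,8,15\},\{9\},\{11,12\},\{10,13,14\}\}$ with $B=\{10,13,14\}$. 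For $A=\{6,7,8,15\}$ the correct parentheses are a left one before $15$ and a right one after $8$ (after deleting the nested blocks $\{1,16\},\{2,5\},\{3,4\}$, the cyclic run of $A$ is $15,6,7,8$); your rule instead puts a left parenthesis before $6$ and a right one after $15$, and that pair would enclose $9,10,\ldots,14$, in particular all of $B$, so applying $\tau$ to your $(L,R)$ does not return $(B,\pi)$. The fix is what the paper does: repeatedly choose a block $A\neq B$ whose elements are consecutive in the \emph{current} cyclic sequence (after earlier blocks have been deleted) and record the first and last elements of that cyclic run --- equivalently, the endpoints of $A$ within the cyclic gap of its parent in the nesting forest rooted at $B$ --- rather than $\min A$ and $\max A$. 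With that correction the rest of your argument (well-formedness of the parenthesization, noncrossingness of the output via ``two matching pairs are disjoint or nested,'' confluence of innermost-pair removal, and the type count giving $(b-1;b_1,\ldots,b_j-1,\ldots,b_n)$) goes through as you describe and matches the paper's proof.
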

\begin{proof}
Let $\tau(L,R)=(B,\pi)$. It is clear that $\pi\in\nca(n)$. It is sufficient to find the inverse map of $\tau$. 

Let $\pi\in\nca(n)$ and $B$ be a block of $\pi$. We will find $L$ and $R$ as follows. First we start with $L=R=\emptyset$ and the cyclic sequence $P$ which is initially $1,2,\ldots,n$. If $\pi$ has more than one block, then let $A$ be a block of $\pi$ not equal to $B$ such that the elements of $A$ are consecutive in the cyclic sequence $P$. Note that we can find such a block because $\pi$ is a noncrossing partition. We add the first and the last integers of $A$ in the cyclic sequence $P$ to $L$ and $R$ respectively. And then we remove the block $A$ from $\pi$ and remove the integers in $A$ in the cyclic sequence $P$. Repeat this procedure until $\pi$ has only one block $B$. Then the resulting sets $L$ and $R$ satisfy $\tau(L,R)=(B,\pi)$. Thus $\tau$ is a bijection.

The `moreover' statement is obvious from the construction of $\tau$.
\end{proof}

We define $P(n,\ell)$ to be the set of $(\ell+1)$-tuples $(L,R_1,R_2,\ldots,R_\ell)$ such that
$L,R_1,R_2,\ldots,R_\ell\subset[n]$ and $|L|=|R_1|+|R_2|+\cdots+|R_\ell|$. 
Similarly, we can consider the \emph{labeled cyclic parenthesization} of $(L,R_1,R_2,\ldots,R_\ell)$ by placing a left parenthesis before $i$ for each $i\in L$ and right parentheses $)_{j_1})_{j_2}\cdots )_{j_t}$ labeled with $j_1<j_2<\cdots<j_t$ after $i$ if $R_{j_1} ,R_{j_2} ,\ldots,R_{j_t}$ are the sets containing $i$ among $R_1,R_2,\ldots,R_\ell$. For each element $x\in R_i$, the \emph{size of $x$} is defined in the same way as in the case of $(L,R)$. We define the \emph{type} of $(L,R_1,R_2,\ldots,R_\ell)$ similarly to the type of $(L,R)$.

\begin{example}
Let $T=(L,R_1,R_2)=(\{2,4,5\}, \{2\}, \{2,6\})\in P(7,2)$. Then the labeled cyclic parenthesization of $T$ is the following:
\begin{equation}
  \label{eq:16b}
1 \:\: (2)_1)_2 \:\: 3 \:\: (4 \:\: (5 \:\: 6)_2 \:\: 7
\end{equation}
 Furthermore the size of $2\in R_1$ is $1$, the size of $2\in R_2$ is $3$ and the size of $6\in R_2$ is $2$. Thus the type of $T$ is $(3;1,1,1,0,\ldots,0)$.
\end{example}

\begin{lem}\label{thm:16}
Let $b,b_1,b_2,\ldots,b_n$ and $c_1,c_2,\ldots,c_\ell$ be nonnegative integers with $b=b_1+b_2+\cdots+b_n=c_1+c_2+\cdots+c_\ell$. Then
  the number of elements $(L,R_1,R_2,\ldots,R_\ell)$ in $P(n,\ell)$ with type $(b;b_1,b_2,\ldots,b_n)$ and $|R_i|=c_i$ for $i\in[\ell]$ is equal to
$$\binom{b}{b_1,b_2,\ldots,b_n}\binom{n}{c_1}\binom{n}{c_2}\cdots\binom{n}{c_\ell}.$$
\end{lem}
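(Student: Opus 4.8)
The plan is to set up an explicit bijection between the set being counted and $W(b;b_1,\dots,b_n)\times\binom{[n]}{c_1}\times\cdots\times\binom{[n]}{c_\ell}$, where $\binom{[n]}{c}$ denotes the collection of $c$-element subsets of $[n]$ and $W(b;b_1,\dots,b_n)$ denotes the set of words $w_1w_2\cdots w_b$ over the alphabet $[n]$ in which the letter $i$ occurs exactly $b_i$ times. Since $|W(b;b_1,\dots,b_n)|=\binom{b}{b_1,\dots,b_n}$ and $\bigl|\binom{[n]}{c}\bigr|=\binom{n}{c}$, the target set has exactly the asserted cardinality. (One should first note that there is nothing to count unless $b_1+2b_2+\cdots+nb_n\le n$, since $\sum_i i b_i$ equals the number of integers lying directly inside some matched pair of parentheses, and each integer lies directly inside at most one such pair; this inequality holds in every application of the lemma, and it is the case we treat.)

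The bijection keeps the sets $R_1,\dots,R_\ell$ and recodes $L$. Given $(L,R_1,\dots,R_\ell)$ with type $(b;b_1,\dots,b_n)$ and $|R_i|=c_i$, form the labeled cyclic parenthesization, take its non-crossing perfect matching of the $b$ left and $b$ right parentheses, and list the right parentheses as $q_1,q_2,\dots,q_b$ in the order they occur when one reads $1,2,\dots,n$ (ties at a common integer being broken by increasing label). If $\sigma_t$ is the size of $q_t$, put $w=\sigma_1\sigma_2\cdots\sigma_b$; by the definition of the type, $w$ has $b_i$ letters equal to $i$, so $w\in W(b;b_1,\dots,b_n)$. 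Since the data $R_1,\dots,R_\ell$ is unchanged and contributes the factor $\prod_{j=1}^{\ell}\binom{n}{c_j}$ freely, it remains only to show that for each fixed placement of the (labeled) right parentheses the map $L\mapsto w$ is a bijection from $\{L\in\binom{[n]}{b}:\type(L,R_1,\dots,R_\ell)=(b;b_1,\dots,b_n)\}$ onto $W(b;b_1,\dots,b_n)$.

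I would prove this by producing the inverse via a greedy reconstruction. Given the placed right parentheses $q_1,\dots,q_b$ (in the reading order above) and a word $w=\sigma_1\cdots\sigma_b$ with $\sigma_1+\cdots+\sigma_b\le n$, process $q_1,q_2,\dots,q_b$ in turn: when $q_t$ is reached, read backwards from $q_t$ through the integers not yet deleted, place its matching left parenthesis just before the $\sigma_t$-th of them, and then delete this pair together with those $\sigma_t$ integers. This determines a set $L\in\binom{[n]}{b}$. The key point is that at each stage $q_t$ is the first right parenthesis still present, so there is no room for a nested pair to its left; one then checks that the non-crossing matching of the parenthesization built from this $L$ assigns $q_t$ exactly the size $\sigma_t$, so the forward map sends this $L$ back to $w$. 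As the reconstruction makes the only choice available at each step, $L\mapsto w$ is injective, and the hypothesis $\sigma_1+\cdots+\sigma_b\le n$ guarantees that the required integers are always available and the selected left-parenthesis slots distinct, so the procedure never fails. This establishes the bijection, hence the formula.

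The step I expect to require genuine care is the verification that the greedy reconstruction is a two-sided inverse in the cyclic — as opposed to linear — setting: when a block wraps around past $n$, its interior in the reading order can contain right parentheses not yet processed, so the bookkeeping matching used during the construction need not be the canonical matching of the reconstructed parenthesization, and one must argue at the level of the set $L$ of left-parenthesis positions rather than at the level of the matching. The cleanest way to handle this is probably to cut the circle just after an integer enclosed by no pair (which exists precisely when $\sum_i i b_i<n$), reducing to a transparent linear statement, and to treat the boundary case $\sum_i i b_i=n$ separately, for instance by temporarily adjoining a free letter. Everything else is routine bookkeeping.
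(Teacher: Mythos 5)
Your proof is correct and follows essentially the same route as the paper's: fix the sets $R_1,\dots,R_\ell$ (contributing $\prod_{j}\binom{n}{c_j}$) and show that, for fixed right parentheses, $L$ is in bijection with the arrangements of the multiset of sizes (contributing $\binom{b}{b_1,\dots,b_n}$); the paper merely asserts that the tuple is determined by the $R_i$'s and the sizes, whereas you supply the explicit greedy inverse and correctly isolate the only delicate point, namely blocks that wrap around in the cyclic setting. Your instinct to argue at the level of the set $L$ rather than the bookkeeping matching, and to cut the circle at an integer enclosed by no pair, is exactly the right way to discharge that point.
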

\begin{proof}
  Let $(L,R_1,R_2,\ldots,R_\ell)\in P(n,\ell)$ satisfy the conditions. Let $x_{i1},x_{i2},\ldots,x_{ic_i}$ be the elements of $R_i$ with $x_{i1}<x_{i2}<\cdots<x_{ic_i}$. Let $t_{ij}$ be the size of $x_{ij}\in R_i$. Then the sequence 
$$t_{11},\ldots, t_{1c_1}, t_{21},\ldots,t_{2c_2},\ldots, t_{\ell 1},\ldots,
t_{\ell c_\ell}$$
is an arrangement of  $\overbrace{1,\ldots,1}^{b_1},\overbrace{2,\ldots,2}^{b_2},\ldots, \overbrace{n,\ldots,n}^{b_n}$.
It is not difficult to see that $(L,R_1,R_2,\ldots,R_\ell)$ is completely
determined by the $R_i$'s and $t_{ij}$'s. Thus we are done.
\end{proof}

Let $\ovp(n,\ell)$ denote the set of $(L,R_1,R_2,\ldots,R_\ell)\in P(n,\ell)$ such that the type $(b;b_1,b_2,\ldots,b_n)$ of $(L,R_1,R_2,\ldots,R_\ell) $ satisfies $\sum_{i=1}^n i b_i <n$. 

Using $\tau$, we define a map $\tau'$ from $\ovp(n,\ell)$ to the set of pairs $(B,\mcc)$, where $\mcc:\pi_1\leq\pi_2\leq\cdots\leq\pi_\ell$ is a multichain in $\nca(n)$ and $B$ is a block of $\pi_1$ as follows. Let $P=(L,R_1,R_2,\ldots,R_\ell)\in\ovp(n,\ell)$. Applying the same argument as in the case of $\tau$ to the labeled cyclic parenthesization of $P$, we get $(B_1,\pi_1)$. Then remove all the right parentheses in $R_1$ from the cyclic parenthesization and their corresponding left parentheses. By repeating this procedure, we get $(B_i,\pi_i)$ for $i=2,3,\ldots,\ell$. Then we obtain a multichain $\mcc:\pi_1\leq\pi_2\leq\cdots\leq\pi_\ell$ in $\nca(n)$. We define $\tau'(P)=(B_1,\mcc)$.

\begin{prop}\label{thm:4}
  The map $\tau'$ is a bijection between $\ovp(n,\ell)$ and the set of pairs $(B,\mcc)$ where $\mcc:\pi_1\leq\pi_2\leq\cdots\leq\pi_\ell$ is a multichain in $\nca(n)$ and $B$ is a block of $\pi_1$. Moreover, if $\tau'(L,R_1,R_2,\ldots,R_\ell)=(B,\mcc)$, the rank jump vector of $\mcc$ is $(s_1,s_2,\ldots,s_{\ell+1})$, $\type(\pi_1)=(b;b_1,b_2,\ldots,b_n)$ and $|B|=j$, then the type of 
$(L,R_1,R_2,\ldots,R_\ell)$ is $(b-1;b_1,\ldots,b_j-1,\ldots,b_n)$ and $(|R_1|,|R_2|,\ldots,|R_\ell|)=(s_2,s_3,\ldots,s_{\ell+1})$.
\end{prop}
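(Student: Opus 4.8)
The plan is to construct the inverse map of $\tau'$ explicitly, building on the inverse of $\tau$ that was established in the previous proposition, and then read off the ``moreover'' statement from that construction. The strategy mirrors how $\tau'$ itself was defined from $\tau$: $\tau$ handles a single partition together with a distinguished block, and $\tau'$ iterates this through the chain while keeping track of which right parentheses get removed at each step.

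First I would set up the inverse. Given a pair $(B,\mcc)$ with $\mcc:\pi_1\leq\pi_2\leq\cdots\leq\pi_\ell$ a multichain in $\nca(n)$ and $B$ a block of $\pi_1$, I want to recover $(L,R_1,R_2,\ldots,R_\ell)\in\ovp(n,\ell)$. The key observation is that each block $A$ of $\pi_1$ other than $B$ gets ``absorbed'' into a larger block at some first stage: let $\operatorname{ind}(A)$ be the smallest index $i$ such that $A$ is contained in a block of $\pi_{i+1}$ strictly larger than $A$ (this is well-defined when we append the maximum element $\hat1$ as $\pi_{\ell+1}$, which is implicit in the rank jump vector). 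We want such a block $A$ to contribute its minimal and maximal elements (in the appropriate cyclic order, as in the construction of $\tau^{-1}$) to $L$ and to $R_{\operatorname{ind}(A)}$ respectively. Concretely: apply $\tau^{-1}$ to $(B,\pi_1)$ to get a first pair $(L,R)$, but then distribute the elements of $R$ among $R_1,\ldots,R_\ell$ according to the index at which the corresponding block is absorbed; the left endpoints all go into $L$. One must check the nesting structure is compatible — that is, that the cyclic parenthesization so produced, with its labels, actually reconstructs the whole chain via the iterative stripping procedure defining $\tau'$. This compatibility follows because $\mcc$ is a multichain of noncrossing partitions: at each stage the blocks being merged are cyclically consecutive among the surviving integers, which is exactly the condition needed for $\tau^{-1}$ to apply at that stage.

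The main obstacle I expect is verifying that this candidate inverse genuinely lands in $\ovp(n,\ell)$ and is two-sided inverse to $\tau'$ — in particular, checking that the type condition $\sum i\cdot b_i < n$ is preserved (this holds because $B$ is a genuine block of $\pi_1$, so after stripping all parentheses at least $|B|\geq 1$ integers remain uncovered) and that the labels on the right parentheses are correctly ordered and nested so that removing all of $R_1$'s parentheses (with their matching left parentheses) and iterating reproduces $\pi_2,\ldots,\pi_\ell$ exactly. This is essentially a careful bookkeeping argument, analogous to but more involved than the one-step case, and I would present it by induction on $\ell$: the case $\ell=1$ is the previous proposition, and the inductive step peels off $R_1$ and the corresponding merges to reduce to a multichain of length $\ell-1$ in $\nca(n)$ (after noting that $\pi_1\leq\pi_2$ and the distinguished block $B$ remains a block of $\pi_2$, so $(B,\pi_2\leq\cdots\leq\pi_\ell)$ is in the domain for the shorter chain).

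Finally, the ``moreover'' statement is immediate once the construction is in place: each block of $\pi_1$ other than $B$ contributes exactly one element to exactly one $R_i$, and it contributes to $R_i$ precisely when it is absorbed between $\pi_i$ and $\pi_{i+1}$ — so $|R_i|$ equals the number of blocks that disappear at step $i$, which is exactly $\rank(\pi_{i+1})-\rank(\pi_i)=s_{i+1}$, using $\rank(\pi)=n-\bk(\pi)$ and the convention $\pi_{\ell+1}=\hat1$. The type statement follows from the ``moreover'' part of the previous proposition applied stagewise, since the multiset of sizes recorded by the right parentheses of $(L,R_1,\ldots,R_\ell)$ is exactly the multiset of block sizes of $\pi_1$ with one block of size $j=|B|$ removed; hence the type of $(L,R_1,\ldots,R_\ell)$ is $(b-1;b_1,\ldots,b_j-1,\ldots,b_n)$.
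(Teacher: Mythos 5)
Your overall plan (invert $\tau'$ by iterating $\tau^{-1}$ and read the ``moreover'' statement off the construction) is the same as the paper's, but the specific rule you give for distributing the right parentheses among $R_1,\dots,R_\ell$ is wrong. You send the right parenthesis of a block $A$ of $\pi_1$ to $R_{\operatorname{ind}(A)}$, where $\operatorname{ind}(A)$ is the smallest $i$ with $A$ properly contained in a block of $\pi_{i+1}$. But in the definition of $\tau'$ a right parenthesis is deleted at the stage when the pair it belongs to stops delimiting a block, and that pair ``follows'' the growing outer block: a block $A$ of $\pi_1$ can be properly contained in a block of $\pi_2$ merely because a block nested \emph{inside} it was merged in, in which case $A$'s own parenthesis pair must survive to later stages. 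Concretely, take $n=4$, $\ell=2$, $\pi_1=\{\{1,3\},\{2\},\{4\}\}$, $\pi_2=\{\{1,2,3\},\{4\}\}$, $B=\{4\}$. The actual preimage under $\tau'$ is $L=\{1,2\}$, $R_1=\{2\}$, $R_2=\{3\}$, i.e.\ the parenthesization $(1\:(2)_1\:3)_2\:4$. Your rule gives $\operatorname{ind}(\{2\})=\operatorname{ind}(\{1,3\})=1$, hence $R_1=\{2,3\}$ and $R_2=\emptyset$; applying $\tau'$ to that tuple yields the chain $\pi_1\leq\{\{1,2,3,4\}\}$, not $\pi_1\leq\pi_2$, and $|R_1|=2\neq s_2=1$, so the ``moreover'' count fails as well. (Relatedly, the number of blocks $A$ of $\pi_1$ with $\operatorname{ind}(A)=i$ is in general not $\bk(\pi_i)-\bk(\pi_{i+1})$.)

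The repair --- and what the paper actually does --- is to build the labeled parenthesization top-down rather than bottom-up: with $B_i$ the block of $\pi_i$ containing $B$, set $P_\ell=\tau^{-1}(B_\ell,\pi_\ell)$ with every right parenthesis labeled $\ell$, and for $i=\ell-1,\dots,1$ obtain $P_i$ from $P_{i+1}$ by adjoining those parenthesis pairs of $\tau^{-1}(B_i,\pi_i)$ whose left parentheses are not already present, labeling the newly added right parentheses with $i$. Equivalently, a right parenthesis lands in $R_i$ for the \emph{largest} $i$ at which it still delimits a block of $\pi_i$, not the smallest $i$ at which its $\pi_1$-block has grown. A second, smaller slip occurs in your proposed induction: $B$ need not remain a block of $\pi_2$ (take $\pi_1=\{\{1\},\{2\}\}\leq\pi_2=\{\{1,2\}\}$ with $B=\{1\}$); the recursion has to be run with the blocks $B_i\supseteq B$, which is why the paper introduces them.
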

\begin{proof}
  We will find the inverse map of $\tau'$. This can be done by using a similar
  argument as in \cite{Athanasiadis2005,Edelman1980,Reiner1997}. Let
  $\mcc:\pi_1\leq\pi_2\leq\cdots\leq\pi_\ell$ be a multichain in $\nca(n)$ and
  $B$ be a block of $\pi_1$. Let $B_i$ be the block of $\pi_i$ containing
  $B$. Now we construct the corresponding labeled cyclic parenthesization as
  follows.

Let $P_\ell$ be the labeled cyclic parenthesization $\tau^{-1}(B_\ell,\pi_\ell)$ where all the right parentheses are labeled with $\ell$. Then for $i\in[\ell-1]$ we define $P_i$ to be the labeled cyclic parenthesization obtained from $P_{i+1}$ by adding the left parentheses, which are not already in $P_{i+1}$, and the corresponding right parentheses of $\tau^{-1}(B_i,\pi_i)$ and by labeling the newly added right parentheses with $i$. It is not difficult to check that the map $(B,\mcc)\mapsto P_1$ is the inverse map of $\tau'$.

The `moreover' statement is obvious from the construction of $\tau'$.
\end{proof}

\begin{thm}\label{thm:15}
Let $b,b_1,b_2,\ldots,b_n$ and $s_1,s_2,\ldots,s_{\ell+1}$ be nonnegative integers satisfying $\sum_{i=1}^n b_i=b$, $\sum_{i=1}^n i\cdot b_i=n$, $\sum_{i=1}^{\ell+1} s_i = n-1$ and $s_1=n-b$. Then the number of multichains $\pi_1\leq \pi_2\leq \cdots \leq \pi_{\ell}$ in $\nca(n)$ with rank jump vector $(s_1,s_2,\ldots,s_{\ell+1})$, $\type(\pi_1)=(b;b_1,b_2,\ldots,b_n)$ is equal to
$$\frac{1}{b} \binom{b}{b_1,b_2,\ldots,b_n} \binom{n}{s_2} \cdots \binom{n}{s_{\ell+1}}.$$
\end{thm}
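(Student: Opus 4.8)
The plan is to count the pairs $(B,\mcc)$ where $\mcc:\pi_1\leq\pi_2\leq\cdots\leq\pi_\ell$ is a multichain in $\nca(n)$ with rank jump vector $(s_1,s_2,\ldots,s_{\ell+1})$ and $\type(\pi_1)=(b;b_1,b_2,\ldots,b_n)$, and $B$ is a block of $\pi_1$. Since $\pi_1$ has exactly $b$ blocks, each multichain of the desired kind contributes exactly $b$ such pairs; hence the number of multichains equals $1/b$ times the number of such pairs, and it suffices to show that the number of pairs $(B,\mcc)$ equals $\binom{b}{b_1,\ldots,b_n}\binom{n}{s_2}\cdots\binom{n}{s_{\ell+1}}$.

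Next I would invoke Proposition~\ref{thm:4}: the map $\tau'$ is a bijection between $\ovp(n,\ell)$ and the set of pairs $(B,\mcc)$, and under this bijection, if $\type(\pi_1)=(b;b_1,\ldots,b_n)$, $|B|=j$, and the rank jump vector of $\mcc$ is $(s_1,s_2,\ldots,s_{\ell+1})$, then $\tau'^{-1}(B,\mcc)$ has type $(b-1;b_1,\ldots,b_j-1,\ldots,b_n)$ and $(|R_1|,\ldots,|R_\ell|)=(s_2,\ldots,s_{\ell+1})$. So the pairs $(B,\mcc)$ with $B$ of size $j$ correspond bijectively to the elements of $P(n,\ell)$ of type $(b-1;b_1,\ldots,b_j-1,\ldots,b_n)$ with $|R_i|=s_{i+1}$ — and here the constraint $\sum_i i b_i = n$ together with $s_1 = n-b$ forces $\sum_i i\cdot(\text{type entries}) = n-j < n$, so such a tuple automatically lies in $\ovp(n,\ell)$ rather than merely $P(n,\ell)$, which is why the size condition in Proposition~\ref{thm:4} is satisfied. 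Summing over $j$, the total number of pairs $(B,\mcc)$ equals the number of $(L,R_1,\ldots,R_\ell)\in P(n,\ell)$ whose type $(c;c_1,\ldots,c_n)$ satisfies $\sum c_i = b-1$, $|R_i|=s_{i+1}$, and $(c_1,\ldots,c_n)$ is obtained from $(b_1,\ldots,b_n)$ by decrementing one nonzero entry — i.e.\ $(c_1,\ldots,c_n)$ ranges over all such decrements with multiplicity.

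Then I would apply Lemma~\ref{thm:16}, which counts the elements of $P(n,\ell)$ with a fixed type $(c;c_1,\ldots,c_n)$ and fixed $|R_i|$ as $\binom{c}{c_1,\ldots,c_n}\binom{n}{s_2}\cdots\binom{n}{s_{\ell+1}}$. The factor $\binom{n}{s_2}\cdots\binom{n}{s_{\ell+1}}$ is the same for every $j$, so it pulls out, and the remaining sum is $\sum_{j:\,b_j>0}\binom{b-1}{b_1,\ldots,b_j-1,\ldots,b_n}$. The standard multinomial identity $\sum_{j}\binom{b-1}{b_1,\ldots,b_j-1,\ldots,b_n}=\binom{b}{b_1,\ldots,b_n}$ (Pascal's rule for multinomials, obtained by conditioning on which part the last object falls into) collapses this to $\binom{b}{b_1,\ldots,b_n}$, giving the total count $\binom{b}{b_1,\ldots,b_n}\binom{n}{s_2}\cdots\binom{n}{s_{\ell+1}}$ for the pairs $(B,\mcc)$, and dividing by $b$ finishes the proof.

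The only genuinely delicate point is bookkeeping: one must check that every type $(b-1;b_1,\ldots,b_j-1,\ldots,b_n)$ arising from a valid $(B,\mcc)$ does satisfy the $\sum_i i\cdot(\cdot) < n$ condition defining $\ovp(n,\ell)$ — this uses $\sum_i i b_i = n$ and $j\geq 1$ — and conversely that every element of $\ovp(n,\ell)$ with $\sum c_i = b-1$ and the prescribed $|R_i|$ actually produces, under $\tau'^{-1}$'s inverse, a pair $(B,\mcc)$ with $\type(\pi_1)=(b;b_1,\ldots,b_n)$; this forces the decremented-entry type to be a decrement of $(b_1,\ldots,b_n)$, and $j$ is recovered as $|B|$. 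Once this correspondence of types is nailed down, everything else is the two cited results plus the multinomial Pascal identity. I expect no real obstacle beyond this indexing check.
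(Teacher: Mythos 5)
Your proposal is correct and follows essentially the same route as the paper: count pairs $(B,\mcc)$ via the bijection $\tau'$ of Proposition~\ref{thm:4}, apply Lemma~\ref{thm:16} for each possible block size $j$, collapse the sum with the multinomial Pascal identity, and divide by $b$; you even include the same remark that the type condition places all relevant tuples in $\ovp(n,\ell)$ rather than merely $P(n,\ell)$. No gaps.
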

\begin{proof}
By Lemma~\ref{thm:16} and Proposition~\ref{thm:4}, the number of pairs $(B,\mcc)$, where $\mcc$ is a multichain satisfying the conditions and $B$ is a block of $\pi_1$, is equal to
$$\sum_{j=1}^n \binom{b-1}{b_1,\ldots,b_j-1,\ldots,b_n} \binom{n}{s_2} \cdots \binom{n}{s_{\ell+1}} = \binom{b}{b_1,b_2,\ldots,b_n} \binom{n}{s_2} \cdots \binom{n}{s_{\ell+1}}.$$
Since there are $b=\bk(\pi_1)$ choices of $B$ for each $\mcc$, we get the theorem. Note that Lemma~\ref{thm:16} states the number of elements in $P(n,\ell)$. However, by the condition on the type, all the elements in consideration are in $\ovp(n,\ell)$.
\end{proof}

Now we can prove Theorem~\ref{thm:kratt1}.

\begin{proof}[Proof of Theorem~\ref{thm:kratt1}]
  Let $\pi_1\leq\pi_2\leq\cdots\leq\pi_\ell$ be a multichain in $\nck(n)$ with
  rank jump vector $(s_1,s_2,\ldots,s_{\ell+1})$ and
  $\typek(\pi_1)=(b;b_1,b_2,\ldots,b_n)$. Note that this is also a multichain in
  $\nca(kn)$ and $\rank(\pi_1)=kn-b$. Thus the rank jump vector of this
  multichain in $\nca(kn)$ is $(kn-b,s_2,\ldots,s_{\ell+1})$ and
  $\type(\pi_1)=(b; b_1',b_2',\ldots,b_{kn}')$ where $b_{ki}'=b_i$ for $i\in[n]$
  and $b_j'=0$ if $j$ is not divisible by $k$.  By Theorem~\ref{thm:15}, the
  number of such multichains is equal to
$$\frac{1}{b}\binom{b}{b_1',b_2',\ldots,b_{kn}'} \binom{kn}{s_2} \cdots \binom{kn}{s_{\ell+1}}=
\frac{1}{b}\binom{b}{b_1,b_2,\ldots,b_n} \binom{kn}{s_2} \cdots \binom{kn}{s_{\ell+1}}.$$
\end{proof}

\subsection{Augmented $k$-divisible noncrossing partitions of type $A$}

If all the block sizes of a partition $\pi\in\Pi(n)$ are divisible by $k$ then
$n$ must be divisible by $k$. Thus $k$-divisible partitions can be defined only
on $\Pi(kn)$. We extend this definition to partitions in $\Pi(n)$ for integers
$n$ not divisible by $k$ as follows.

Let $k$ and $r$ be integers with $0<r<k$.  A partition $\pi$ of $[kn+r]$ is \emph{augmented $k$-divisible} if the sizes of all but one of the blocks are divisible by $k$. 

Let $\nck(n;r)$ denote the subposet of $\nca(kn+r)$ consisting of the augmented $k$-divisible noncrossing partitions.  Then $\nck(n;r)$ is a graded poset with rank function $\rank(\pi)=n-\bk'(\pi)$, where $\bk'(\pi)$ is the number of blocks of $\pi$ whose sizes are divisible by $k$. We define $\typek(\pi)$ to be $(b;b_1,b_2,\ldots,b_n)$ where $b_i$ is the number of blocks $B$ of size $ki$ and $b=b_1+b_2+\cdots+b_n$. 

\begin{thm}\label{thm:6}
Let $0<r<k$. 
Let $b,b_1,b_2,\ldots,b_n$ and $s_1,s_2,\ldots,s_{\ell+1}$ be nonnegative integers satisfying $\sum_{i=1}^n b_i=b$, $\sum_{i=1}^n i\cdot b_i\leq n$, $\sum_{i=1}^{\ell+1} s_i = n$ and $s_1=n-b$. 
Then the number of multichains $\mcc:\pi_1\leq \pi_2\leq \cdots \leq \pi_{\ell}$ in $\nck(n;r)$ with rank jump vector $(s_1,s_2,\ldots,s_{\ell+1})$ and $\typek(\pi_1)=(b;b_1,b_2,\ldots,b_n)$ is equal to
$$\binom{b}{b_1,b_2,\ldots,b_n} \binom{kn+r}{s_2} \cdots \binom{kn+r}{s_{\ell+1}}.$$
\end{thm}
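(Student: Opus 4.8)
The plan is to reduce Theorem~\ref{thm:6} to the type $A$ result (Theorem~\ref{thm:15}) by a ``de-augmentation'' bijection, following the same philosophy used to deduce Theorem~\ref{thm:kratt1} from Theorem~\ref{thm:15}, but now accounting for the single exceptional block whose size is not divisible by $k$. Given a multichain $\mcc:\pi_1\leq\pi_2\leq\cdots\leq\pi_\ell$ in $\nck(n;r)$ with $\typek(\pi_1)=(b;b_1,\ldots,b_n)$, each $\pi_i$ has a unique block $E_i$ whose size is not divisible by $k$, and $E_1\subseteq E_2\subseteq\cdots\subseteq E_\ell$ since the $E_i$ form a weakly increasing chain and sizes can only merge. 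The exceptional block thus behaves like a marked block threaded through the whole multichain. The natural device is to augment the ground set $[kn+r]$ by $k-r$ ``phantom'' points so that $E_i$ gets padded up to a size divisible by $k$; after this the multichain becomes an honest multichain in $\nck(n+1)$ in which one distinguished block $B_1$ (a block of $\pi_1$, containing the phantoms) is marked. Concretely I would insert $k-r$ new points in a fixed cyclic location adjacent to, say, the largest element, absorb them into every $E_i$, and observe that noncrossingness and the chain relation are preserved, so we land in $\nck(n+1)$ with rank jump vector $(s_1,s_2,\ldots,s_{\ell+1})$ — here the rank convention matches because in $\nck(n;r)$ the rank counts only the $k$-divisible blocks, which is exactly what survives as genuine blocks after padding.

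The key steps, in order: (1) set up the padding map $\Phi$ from pairs $(\mcc,\star)$ — where $\star$ just records that $E_i$ is the exceptional block, which is automatic — to pairs $(B,\mcc')$ where $\mcc'$ is a multichain in $\nck(n+1)$ and $B$ is the block of $\pi_1'$ containing the $k-r$ phantom points; (2) check $\Phi$ is a bijection onto those pairs $(B,\mcc')$ in which $B$ has size divisible by $k$ and at least $k-r$ — equivalently, onto the set of all $(B,\mcc')$ with $\mcc'$ in $\nck(n+1)$ and $B$ an arbitrary block of $\pi_1'$, once we also allow $B$ to be the block of the phantoms when $|E_1|<k$, i.e. track the size carefully; (3) translate types: if $|E_1|=kj+r$ then after padding $B$ has size $k(j+1)$, so $\typek(\pi_1')=(b+1;b_1,\ldots,b_j+1,\ldots,b_n)$ when $j\ge 1$, and $(b+1;b_1+1,b_2,\ldots)$ when $j=0$; in all cases the multiset of $k$-block-sizes of $\pi_1'$ is obtained from $(b_1,\ldots,b_n)$ by incrementing exactly one $b_j$ ($j\ge 1$) or by adding a new block of size $k$; (4) invoke Theorem~\ref{thm:15} for $\nck(n+1)$ to count pairs $(B,\mcc')$, summing over the possible value of $j=|B|/k$, and extract the count for $\mcc$ alone by dividing by the number $b+1$ of blocks of $\pi_1'$ — which precisely cancels the $\frac1{b+1}$ in Theorem~\ref{thm:15}, leaving $\binom{b}{b_1,\ldots,b_n}\binom{k(n+1)}{s_2}\cdots$.

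The one remaining discrepancy is that Theorem~\ref{thm:15} produces binomials $\binom{k(n+1)}{s_i}$, whereas Theorem~\ref{thm:6} wants $\binom{kn+r}{s_i}$, and $k(n+1)=kn+r+(k-r)\ne kn+r$. So the padding-by-$(k-r)$-points idea as stated overcounts the ``right slots.'' The fix — and the step I expect to be the main obstacle — is to do the padding more cleverly so that the $k-r$ phantom points are never available as endpoints of the sets $R_i$ in the Edelman-type encoding: i.e. work directly with the cyclic-parenthesization machinery of this section rather than passing through $\nck(n+1)$ as a black box. Reworking $\tau,\tau'$ (Propositions preceding Theorem~\ref{thm:15}) so that one distinguished block is forced to contain a fixed block of $k-r$ ``frozen'' positions that may never receive parentheses gives a count over $\ovp(kn+r,\ell)$-type objects with exactly $\binom{kn+r}{s_i}$ for each $R_i$ and a multinomial $\binom{b}{b_1,\ldots,b_n}$ already (the exceptional block is not among the $b_i$-counted ones, and its ``size'' is rigid), so no division is needed. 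Thus the cleanest route is: mimic the proof of Theorem~\ref{thm:15} verbatim, but with one block pre-marked as the exceptional block of $\pi_1$ (its size $kj+r$ pinned down once we also mark $j$), apply the analogue of Lemma~\ref{thm:16} and Proposition~\ref{thm:4} in which the marked block contributes no factor and the remaining blocks contribute $\binom{b}{b_1,\ldots,b_n}$, and sum over $j\ge 0$ — the sum collapses because $\sum_{i} i b_i\le n$ leaves room for exactly the needed exceptional sizes and the constraint $s_1=n-b$ makes the count independent of the distribution of the exceptional size. The bijection's well-definedness (that every augmented $k$-divisible noncrossing partition, with its forced exceptional block, is hit exactly once) is the part that needs the most care, and mirrors the verification in Proposition~\ref{thm:4}.
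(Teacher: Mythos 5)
Your final plan --- discard the phantom-point padding, regard $\mcc$ as a multichain in $\nca(kn+r)$, and take the distinguished block $B$ in the cyclic-parenthesization bijection to be the unique block of $\pi_1$ of size $m=kn+r-\sum_i ki\cdot b_i$ --- is exactly the paper's proof: Proposition~\ref{thm:4} and Lemma~\ref{thm:16} apply verbatim (no reworking of $\tau'$ and no frozen positions are needed), the forced choice of $B$ eliminates the division by the number of blocks and deletes the exceptional block's entry from the multinomial, and the sets $R_i\subset[kn+r]$ give precisely the factors $\binom{kn+r}{s_i}$. The only cosmetic difference is that no sum over $j$ is required, since the exceptional block's size $m$ is already determined by $(b;b_1,\ldots,b_n)$.
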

\begin{proof}
  Let $m=kn+r-\sum_{i=1}^n ki\cdot b_{i}$.  Observe that, as an element of
  $\nca(kn+r)$, $\pi_1$ has type $(b+1;b_1',b_2',\ldots,b_{kn+r}')$, where
  $b_{ki}'=b_i$, $b_m'=1$ and $b_j'=0$ if $j\ne m$ and $j\ne ki$ for any
  $i\in[n]$. For a given multichain $\mcc$ satisfying the conditions, let $B$ be
  the unique block of $\pi_1$ whose size is $m$. By Lemma~\ref{thm:16} and
  Proposition~\ref{thm:4}, the number of pairs $(B,\mcc)$ is equal to
$$\binom{b}{b_1',\ldots,b_m'-1,\ldots,b_{kn+r}'} \binom{kn+r}{s_2} \cdots \binom{kn+r}{s_{\ell+1}},$$
which is equal to the desired formula.
\end{proof}

It is not difficult to see that for fixed integers $n$ and $b$, the sum of
$\binom{b}{b_1,b_2,\ldots,b_n}$ over all $n$-tuples $(b_1,b_2,\ldots,b_n)$ of
nonnegative integers such that $\sum_{i=1}^n b_i = b$ and $\sum_{i=1}^n i b_i
\leq n$ is equal to $\binom{n}{b}$.  Thus adding up the expression in
Theorem~\ref{thm:6} for all possible nonnegative integers $b_1,b_2,\ldots,b_n$,
we get the following corollary.

\begin{cor}\label{thm:1}
Let $0<r<k$. 
Let $s_1,s_2,\ldots,s_{\ell+1}$ be nonnegative integers with $s_1+\cdots+s_{\ell+1}=n$. Then the number of multichains in $\nck(n;r)$ with rank jump vector $(s_1, s_2,\ldots, s_{\ell+1})$ is equal to
  $$\binom{n}{s_1}\binom{kn+r}{s_2}\cdots\binom{kn+r}{s_{\ell+1}}.$$
\end{cor}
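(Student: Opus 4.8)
The plan is to derive Corollary~\ref{thm:1} directly from Theorem~\ref{thm:6} by summing over all admissible type vectors $(b_1,b_2,\ldots,b_n)$. Fix the rank jump vector $(s_1,s_2,\ldots,s_{\ell+1})$ with $s_1+\cdots+s_{\ell+1}=n$, and set $b=n-s_1$. Every multichain $\mcc:\pi_1\leq\pi_2\leq\cdots\leq\pi_\ell$ in $\nck(n;r)$ with this rank jump vector has $\rank(\pi_1)=s_1$, hence $\bk'(\pi_1)=n-s_1=b$, so if $\typek(\pi_1)=(b;b_1,\ldots,b_n)$ then $\sum_{i=1}^n b_i=b$ and automatically $\sum_{i=1}^n i\cdot b_i\leq n$ (the $kn+r$ positions are partitioned into $b$ blocks of sizes $ki$ plus possibly one extra block, so $\sum ki\cdot b_i\leq kn+r<k(n+1)$ forces $\sum i\cdot b_i\leq n$). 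Thus the multichains in question are exactly partitioned according to the value of $\typek(\pi_1)$, and the hypotheses of Theorem~\ref{thm:6} are met for each such type vector with $s_1=n-b$.

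Next I would invoke Theorem~\ref{thm:6} for each fixed $(b_1,\ldots,b_n)$ with $\sum b_i=b$ and $\sum i b_i\leq n$, giving the count
\[
\binom{b}{b_1,b_2,\ldots,b_n}\binom{kn+r}{s_2}\cdots\binom{kn+r}{s_{\ell+1}},
\]
and then sum over all such $(b_1,\ldots,b_n)$. The factor $\binom{kn+r}{s_2}\cdots\binom{kn+r}{s_{\ell+1}}$ is independent of the type, so the total is
\[
\left(\sum_{(b_1,\ldots,b_n)} \binom{b}{b_1,b_2,\ldots,b_n}\right)\binom{kn+r}{s_2}\cdots\binom{kn+r}{s_{\ell+1}},
\]
where the sum ranges over nonnegative integer tuples with $\sum b_i=b$ and $\sum i b_i\leq n$. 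By the combinatorial identity already noted in the paragraph preceding Corollary~\ref{thm:1}, this inner sum equals $\binom{n}{b}=\binom{n}{n-s_1}=\binom{n}{s_1}$, which yields the claimed formula.

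The only point requiring a word of justification is the cited identity $\sum\binom{b}{b_1,\ldots,b_n}=\binom{n}{b}$, where the sum is over $(b_1,\ldots,b_n)$ with $\sum b_i=b$ and $\sum i b_i\leq n$. A clean combinatorial argument: a multinomial coefficient $\binom{b}{b_1,\ldots,b_n}$ with $\sum i b_i\leq n$ counts the ways to arrange a word of length $b$ over the alphabet $\{1,2,\ldots,n\}$ using letter $i$ exactly $b_i$ times with total letter-sum at most $n$; equivalently, compositions of an integer $\le n$ into exactly $b$ positive parts. Summing over all such type vectors counts all compositions of all integers $m$ with $b\leq m\leq n$ into exactly $b$ positive parts, which is $\sum_{m=b}^{n}\binom{m-1}{b-1}=\binom{n}{b}$ by the hockey-stick identity. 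I expect this verification to be the only mildly nontrivial step; everything else is bookkeeping, and since the paper states the identity as established, it can simply be quoted. Hence the corollary follows.
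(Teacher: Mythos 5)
Your proof is correct and follows exactly the paper's route: the corollary is obtained by summing the formula of Theorem~\ref{thm:6} over all admissible type vectors and invoking the identity $\sum\binom{b}{b_1,\ldots,b_n}=\binom{n}{b}$, which the paper states without proof in the preceding paragraph. Your composition/hockey-stick justification of that identity is a valid (and welcome) filling-in of the one step the paper leaves to the reader.
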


The \emph{zeta polynomial} $Z(P,\ell)$ of a poset $P$ is the number of
multichains $\pi_1\leq \pi_2\leq\cdots\leq \pi_\ell$ in $P$.  By summing the formula
in Corollary~\ref{thm:1} for all $(\ell+1)$-tuples $(s_1,\ldots,s_{\ell+1})$
with $s_1+\cdots+s_{\ell+1}=n$, we get the zeta polynomial of $\nck(n;r)$.

\begin{cor}\label{thm:2}
Let $0<r<k$.   The zeta polynomial of $\nck(n;r)$ is given by
$$Z(\nck(n;r),\ell)=\binom{n+\ell (kn+r)}{n}.$$
\end{cor}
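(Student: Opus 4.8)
The plan is to derive Corollary~\ref{thm:2} directly from Corollary~\ref{thm:1} by summing over all admissible rank jump vectors. Since $Z(\nck(n;r),\ell)$ counts all multichains $\pi_1\leq\cdots\leq\pi_\ell$ in $\nck(n;r)$ with no restriction on ranks, and since every such multichain has a well-defined rank jump vector $(s_1,\ldots,s_{\ell+1})$ with $s_1+\cdots+s_{\ell+1}=n$ (the rank of the maximum element of $\nck(n;r)$), we have
$$Z(\nck(n;r),\ell)=\sum_{\substack{s_1+\cdots+s_{\ell+1}=n\\ s_i\geq 0}} \binom{n}{s_1}\binom{kn+r}{s_2}\cdots\binom{kn+r}{s_{\ell+1}}.$$

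The remaining step is a routine generating-function or Vandermonde-type identity: the sum on the right is exactly the coefficient extraction showing that the product $(1+x)^n(1+x)^{kn+r}\cdots(1+x)^{kn+r}=(1+x)^{n+\ell(kn+r)}$ has $x^n$-coefficient equal to $\binom{n+\ell(kn+r)}{n}$. Concretely, I would note that $\binom{n}{s_1}\binom{kn+r}{s_2}\cdots\binom{kn+r}{s_{\ell+1}}$ is the number of ways to choose a total of $n$ elements from $\ell+1$ disjoint sets of sizes $n, kn+r, \ldots, kn+r$ (with $\ell$ copies of $kn+r$), namely one size-$n$ set and $\ell$ size-$(kn+r)$ sets, taking $s_i$ from the $i$-th; summing over all compositions $(s_1,\ldots,s_{\ell+1})$ of $n$ counts all $n$-subsets of the disjoint union, which has $n+\ell(kn+r)$ elements. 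Hence the sum equals $\binom{n+\ell(kn+r)}{n}$.

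I do not expect any genuine obstacle here: the combinatorial identity is the standard Vandermonde convolution, and the passage from Corollary~\ref{thm:1} to the zeta polynomial is immediate from the definition of $Z(P,\ell)$ together with the fact (stated in the paper) that $\nck(n;r)$ is graded with maximum element of rank $n$. The only point requiring a word of care is that multichains are counted with the rank jump vector as recorded, so partitioning the set of all multichains according to their rank jump vectors is a genuine partition (each multichain has exactly one rank jump vector), which makes the summation valid. This is essentially a one-line deduction, so the ``proof'' will simply state the summation and invoke Vandermonde's identity.
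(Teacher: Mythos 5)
Your proposal is correct and follows exactly the paper's route: the paper likewise obtains Corollary~\ref{thm:2} by summing the formula of Corollary~\ref{thm:1} over all $(\ell+1)$-tuples $(s_1,\ldots,s_{\ell+1})$ with $s_1+\cdots+s_{\ell+1}=n$, the evaluation of the sum being the Vandermonde convolution you describe.
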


Corollary~\ref{thm:2} will be used to prove Armstrong's conjecture in
Section~\ref{sec:armstr-conj}.

\section{$k$-divisible noncrossing partitions of type $B$}
\label{sec:kb}

Let $\pi\in\ncb(kn)$. We say that $\pi$ is a \emph{$k$-divisible noncrossing partition of type $B_n$} if the size of each block of $\pi$ is divisible by $k$. 

Let $\nckb(n)$ denote the subposet of $\ncb(kn)$ consisting of $k$-divisible noncrossing partitions of type $B_n$. Then $\nckb(n)$ is a graded poset with rank function $\rank(\pi)=n-\nz(\pi)$, where $\nz(\pi)$ denotes the number of unordered pairs $(B,-B)$ of nonzero blocks of $\pi$. 

We can prove Theorem~\ref{thm:kratt2} using a similar method as in the proof of
Theorem~\ref{thm:kratt1}. Instead of doing this, we will prove the following
lemma which implies Theorem~\ref{thm:kratt2}. Note that the following lemma is
also a generalization of \eqref{eq:14}.

For a multichain $\mcc:\pi_1\leq\pi_2\leq\cdots\leq\pi_\ell$ in $\nckb(n)$, the
\emph{index} $\ind(\mcc)$ of $\mcc$ is the smallest integer $d$ such that
$\pi_d$ has a zero block. If there is no such integer $d$, then
$\ind(\mcc)=\ell+1$.

\begin{lem}\label{thm:atha}
Let $b,b_1,b_2,\ldots,b_n$ and $s_1,s_2,\ldots,s_{\ell+1}$ be nonnegative integers satisfying $\sum_{i=1}^n b_i=b$, $\sum_{i=1}^n i\cdot b_i\leq n$, $\sum_{i=1}^{\ell+1} s_i = n$ and $s_1=n-b$. Then the number of multichains $\mcc:\pi_1\leq \pi_2\leq\cdots\leq\pi_{\ell}$ in $\nckb(n)$ with rank jump vector $(s_1,s_2,\ldots,s_{\ell+1})$, $\typek(\pi_1)=(b;b_1,b_2,\ldots,b_n)$ and $\ind(\mcc)=d$ is equal to
$$\binom{b}{b_1,b_2,\ldots,b_n} \binom{kn}{s_2} \cdots \binom{kn}{s_{\ell+1}},$$
if $d=1$, and
$$\frac{s_d}{b} \binom{b}{b_1,b_2,\ldots,b_n} \binom{kn}{s_2} \cdots \binom{kn}{s_{\ell+1}},$$
if $d\geq2$.
\end{lem}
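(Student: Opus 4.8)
The plan is to use the bijection $\psi:\ncb(kn)\to\ncbb(kn)$ from Theorem~\ref{thm:simbij} together with the order-comparison criterion of Proposition~\ref{thm:12}, reducing the enumeration of multichains in $\nckb(n)$ to an enumeration of multichains in $\nca(kn)$ decorated with extra data. Given a multichain $\mcc:\pi_1\leq\cdots\leq\pi_\ell$ in $\nckb(n)$ with $\ind(\mcc)=d$, write $\psi(\pi_i)=(\sigma_i,x_i)$. By Proposition~\ref{thm:12}, $\sigma_1\leq\sigma_2\leq\cdots\leq\sigma_\ell$ is a multichain in $\nca(kn)$, and the sequence $x_1,\ldots,x_\ell$ is highly constrained: since $\pi_d$ is the first partition with a zero block, $x_i$ is $\emptyset$ or an edge for $i<d$ and $x_i$ is a block of $\sigma_i$ for $i\geq d$; moreover once $x_d$ is chosen as a block $B$ of $\sigma_d$, the values $x_{d+1},\ldots,x_\ell$ are forced (each $x_i$ is a block of $\sigma_i$ containing $x_d$), and $x_1,\ldots,x_{d-1}$ are also forced by $x_d$ via the ``minimal length edge'' rule in Proposition~\ref{thm:12}. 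So the only genuine freedom beyond the $A$-type multichain $(\sigma_i)$ is the choice of the block $x_d$ of $\sigma_d$; the multichain $\mcc$ is determined by $(\sigma_1\leq\cdots\leq\sigma_\ell, x_d)$. The $k$-type of $\pi_1$ translates, via the ``moreover'' part of Theorem~\ref{thm:simbij}, into the $k$-type of $\sigma_1$: if $d\geq 2$ then $\pi_1$ has no zero block so $\typek(\sigma_1)=\typek(\pi_1)=(b;b_1,\ldots,b_n)$, while if $d=1$ then $\pi_1$ has a zero block of some size $2ki$ and $\typek(\sigma_1)=(b+1;b_1,\ldots,b_i+1,\ldots,b_n)$, with $x_1$ being a block of $\sigma_1$ of size $ki$.

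For the case $d=1$: the data is a multichain $\sigma_1\leq\cdots\leq\sigma_\ell$ in $\nca(kn)$ together with a chosen block $B$ of $\sigma_1$ of size $ki$ for some $i$, where $\typek(\sigma_1)=(b+1;b_1,\ldots,b_i+1,\ldots,b_n)$ depends on $i$. This is exactly the ``pair $(B,\mcc)$'' counted by Proposition~\ref{thm:4} via the bijection $\tau'$ (after the standard passage from $\nca(kn)$ to the appropriately-$k$-supported type, as in the proof of Theorem~\ref{thm:kratt1}). Summing $\binom{b}{b_1,\ldots,b_i-1+1,\ldots}$... more precisely, the number of such triples with the block size equal to $ki$ is $\binom{b}{b_1,\ldots,b_n}\binom{kn}{s_2}\cdots\binom{kn}{s_{\ell+1}}$ with the $b_i$ in the $i$-th slot replaced appropriately; summing over $i$ and recognizing the multinomial identity $\sum_{j}\binom{b}{b_1,\ldots,b_j-1,\ldots,b_n}=\binom{b-1}{\cdots}$-type telescoping — actually here the cleanest route is: the number of (multichain, chosen block $B$ of $\pi_1$) pairs with $\typek(\pi_1)$ fixed and the size of the removed-block data matching is directly $\binom{b}{b_1,\ldots,b_n}\binom{kn}{s_2}\cdots\binom{kn}{s_{\ell+1}}$, and in the $d=1$ case the block $x_1=B$ is not a free choice but determined by $\mcc$, yet the zero block itself, i.e. the ``$B$'' of Proposition~\ref{thm:4}, is determined by $\pi_1$; so there is no overcounting and we get exactly the claimed formula. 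I will need to argue carefully that each multichain $\mcc$ in $\nckb(n)$ with $\ind(\mcc)=1$ corresponds to exactly one pair $(B,(\sigma_i))$ under $\tau'$ (the zero block of $\pi_1$ gives $B$, and $\sigma_1$ recovers the rest), which forces $s_1 = kn - \rank(\sigma_1)$; since $\rank(\sigma_1)=kn-(b+1)$ one checks $s_1=n-b$ is compatible, using $\rank(\pi_1)=n-\nz(\pi_1)=n-b$.

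For the case $d\geq 2$: now $\typek(\sigma_1)=(b;b_1,\ldots,b_n)$ exactly, and the multichain $(\sigma_i)$ in $\nca(kn)$ ranges over all multichains with this $k$-type and with rank jump vector $(s_1,\ldots,s_{\ell+1})$ — here $s_1=n-b=kn-b-(k-1)n$... rather, as an $A$-type multichain its rank jump vector is $(kn-b, s_2,\ldots,s_{\ell+1})$, matching the hypotheses of Theorem~\ref{thm:15} after the $k$-support translation. The extra datum is the block $x_d$ of $\sigma_d$. The key structural point, which I expect to be the main obstacle, is to show that allowing $x_d$ to be an arbitrary block of $\sigma_d$ is equivalent to the condition $\ind(\mcc)=d$ (not smaller): I must verify that with $x_d$ a block of $\sigma_d$ and $x_1,\ldots,x_{d-1}$ defined by the minimal-length-edge rule of Proposition~\ref{thm:12}, the resulting $\pi_1,\ldots,\pi_{d-1}$ genuinely have no zero block (true because $x_i$ for $i<d$ is an edge or $\emptyset$, never a block, by construction) and that this assignment really does yield a valid multichain in $\nckb(n)$ (i.e. each $\pi_i$ is $k$-divisible — this needs the block sizes of $\pi_i$ to be divisible by $k$, which follows because the block sizes of $\sigma_i$ are multiples of $k$ and $\psi^{-1}$ either keeps a block $\pm A$ or splits-and-recombines $A_1\cup(-A_2)$ whose size equals $|A|$, a multiple of $k$; the zero block from Case 3 has size $2|x_d|$, again a multiple of $k$). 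Granting all this, the count becomes: (number of multichains $(\sigma_i)$ in $\nca(kn)$ with the given $k$-type and rank jumps) times (number of blocks $x_d$ of $\sigma_d$ that can serve as $x_d$ — but $x_d$ being a block of $\sigma_d$ imposes $\rank(\pi_d)=\rank(\sigma_d)-1$... no: $\nz$ drops). The clean way: count pairs $(\mcc, x_d)$ is wrong; instead count pairs (multichain $(\sigma_i)$ in $A$, block $B_d$ of $\sigma_d$) — and crucially $B_d$ is a block of $\sigma_d$, not $\sigma_1$, so I will push $B_d$ back along $\sigma_1\leq\sigma_d$ to the block $B_1$ of $\sigma_1$ containing it; then the pair $(\,(\sigma_i), B_1\,)$ is counted by Proposition~\ref{thm:4} as $\binom{b}{b_1,\ldots,b_n}\binom{kn}{s_2}\cdots\binom{kn}{s_{\ell+1}}$, and the number of $B_d$'s lying over a fixed $B_1$ is the number of blocks of $\sigma_d$ inside $B_1$. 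Summing this over $B_1$ and over $(\sigma_i)$, and using that $\sum_{B_1\text{ block of }\sigma_1}(\#\text{blocks of }\sigma_d\text{ in }B_1)=\bk(\sigma_d)$, together with the rank identity $\bk(\sigma_d)=\bk(\sigma_1)+\sum_{i=2}^{d}s_i = b + (s_2+\cdots+s_d)$ — hmm, this does not obviously collapse to the factor $s_d/b$. The resolution, and the real content of the lemma, is that one should instead directly count triples (multichain $(\sigma_i)$, block $B_{d-1}$ of $\sigma_{d-1}$ that becomes a block of $\sigma_d$ in exactly the step where the zero block first appears), i.e. the new datum is a block created at step $d$; one shows a block $x_d$ of $\sigma_d$ arises as ``the zero block of $\pi_d$ but not of $\pi_{d-1}$'' and, via $\tau/\tau'$, the number of ways equals $s_d$ times the count of the underlying multichain divided by the normalization $b$, exactly as in the derivation of \eqref{eq:14}. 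I would carry this out by refining the bijection $\tau'$ of Proposition~\ref{thm:4} so that instead of recording a block $B$ of $\pi_1$ it records which right parenthesis labeled $d$ (there are $s_d$ of them, as $|R_d|=s_d$) encloses the future zero block, giving the factor $s_d$ directly, while the $1/b$ comes from the overcounting in Proposition~\ref{thm:4} exactly as in Theorem~\ref{thm:15}. Finally I will note the identity, for fixed $n,b$, that $\sum\binom{b}{b_1,\ldots,b_n}=\binom{n}{b}$ over the relevant tuples, which is not needed here but confirms consistency with \eqref{eq:14} upon summing over types, and I will check the $d=\ell+1$ boundary (no zero block ever) separately by the same mechanism with the minimal-edge rule and $x_i\in\{\emptyset,\text{edge}\}$ throughout, recovering Theorem~\ref{thm:kratt2} when summed over $d$.
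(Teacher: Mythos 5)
Your starting point coincides with the paper's (apply $\psi$, pass to the multichain $\sigma_1\leq\cdots\leq\sigma_\ell$ in $\nck(n)$, and control the $x_i$ via Proposition~\ref{thm:12}), but there is a genuine gap at the heart of the $d\geq2$ case, and you essentially admit it when your computation fails to produce the factor $s_d/b$. The error is your claim that once $x_d$ is chosen as a block of $\sigma_d$, the value $x_{d-1}$ is forced by the minimal-length-edge rule. It is not: cases (4) and (5) of Proposition~\ref{thm:12} together allow $x_{d-1}$ to be \emph{any} edge $(u,v)$ of $\sigma_{d-1}$ with $u,v\in x_d$, \emph{or} the minimal-length edge enclosing $x_d$ (or $\emptyset$) --- that is, $1+e(B)$ choices for $x_{d-1}$ when $x_d=B$, where $e(B)$ is the number of edges of $\sigma_{d-1}$ inside $B$. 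Only $x_1,\ldots,x_{d-2}$ are then determined by $x_{d-1}$. This extra freedom is precisely the source of the factor $s_d$: writing $t(B)$ for the number of blocks of $\sigma_{d-1}$ merging into $B$, one has $e(B)=|B|-t(B)$ and $\sum_{B\in\sigma_d}(t(B)-1)=s_d-1$, so the number of admissible pairs $(x_d,x_{d-1})$ is $\sum_{B\in\sigma_d}(1+e(B))=kn-(s_d-1)$.

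The second point you leave unresolved (your aside ``$\nz$ drops'') is also essential: since $x_i$ is a block of $\sigma_i$ exactly for $i\geq d$, the $\sigma$-chain has rank jump vector $(s_1,\ldots,s_d-1,\ldots,s_{\ell+1})$, not $(s_1,\ldots,s_{\ell+1})$. Counting those chains by Theorem~\ref{thm:15} gives $\frac1b\binom{b}{b_1,\ldots,b_n}\binom{kn}{s_2}\cdots\binom{kn}{s_d-1}\cdots\binom{kn}{s_{\ell+1}}$, and multiplying by $kn-s_d+1$ and using $\binom{kn}{s_d-1}(kn-s_d+1)=s_d\binom{kn}{s_d}$ yields the stated formula. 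Your proposed repair --- refining $\tau'$ so as to mark one of the $s_d$ right parentheses labeled $d$ --- is plausible in spirit but is not carried out, and as written your count of pairs (chain, block of $\sigma_d$) with the unmodified rank jump vector would give $\bk(\sigma_d)/b$ times the wrong binomial product. The $d=1$ case in your write-up is also garbled but recoverable: there $x_1$ is a block of $\sigma_1$ of the fixed size $k(n-\sum_i i b_i)$, $\type(\sigma_1)=(b+1;\ldots,b_t+1,\ldots)$, the $\sigma$-chain has first rank jump $s_1-1$, and $\frac{b_t+1}{b+1}\binom{b+1}{b_1,\ldots,b_t+1,\ldots,b_n}=\binom{b}{b_1,\ldots,b_n}$ closes the computation.
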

\begin{proof}
  Recall the bijection $\psi$ in Section~\ref{sec:interpr-noncr-part-2}.  Let
  $\psi(\pi_i)=(\sigma_i,x_i)$ for $i\in[\ell]$. By Proposition~\ref{thm:12}, we
  have $\sigma_i\in\nck(n)$ and
  $\sigma_1\leq\sigma_2\leq\cdots\leq\sigma_{\ell}$. Since $\ind(\mcc)=d$, $x_d$
  is a block of $\sigma_d$ and $x_{d-1}$ is not a block of $\sigma_{d-1}$. By
  Proposition~\ref{thm:12}, $x_1,x_2,\ldots,x_{d-2}$ (resp.~$x_{d+1},\ldots,x_{\ell}$)
  are completely determined by $x_{d-1}$ (resp.~$x_d$) together with
  $\sigma_1,\sigma_2,\ldots,\sigma_{\ell}$.  Moreover, $x_{d-1}$ is one of the
  following:
\begin{itemize}
\item an edge $(u,v)$ of $\sigma_{d-1}$ such that $u,v\in x_d$,
\item the minimal length edge $(u,v)$ of $\sigma_{d-1}$ with $u<\min(x_d)\leq\max(x_d)<v$ if such an edge exists; and $\emptyset$ otherwise.
\end{itemize}
Again by Proposition~\ref{thm:12}, for any choice of $\mcc':\sigma_1\leq\sigma_2\leq\cdots\leq\sigma_{\ell}$, $x_d$ and $x_{d-1}$ with the above conditions, we can construct the corresponding multichain $\pi_1\leq \pi_2\leq\cdots\leq\pi_{\ell}$. 

Assume $d=1$. Then $\pi_1$ has a zero block. Let $2t\cdot k$ be the size of the zero block of $\pi_1$. Then $\type(\sigma_1)=(b+1;b_1,\ldots,b_t+1,\ldots,b_n)$ and the rank jump vector of $\mcc'$ is $(s_1-1,s_2,\ldots,s_{\ell+1})$. Thus the number of such multichains $\mcc'$ is equal to 
$$\frac{1}{b+1} \binom{b+1}{b_1,\ldots, b_t+1, \ldots, b_n} \binom{kn}{s_2} \cdots \binom{kn}{s_{\ell+1}},$$
and the number of choices of $x_1$ is $b_t +1$. By multiplying these two numbers, we get the formula for the case $d=1$.

Now assume $d\geq2$. Then $\type(\sigma_1)=(b;b_1,b_2,\ldots,b_n)$ and the rank jump vector of $\mcc'$ is $(s_1,\ldots,s_d-1,\ldots,s_{\ell+1})$. Thus the number of such multichains $\mcc'$ is equal to 
\begin{equation}
  \label{eq:5}
\frac{1}{b} \binom{b}{b_1,b_2,\ldots,b_n} \binom{kn}{s_2} \cdots\binom{kn}{s_d-1}\cdots \binom{kn}{s_{\ell+1}}.
\end{equation}
Let us fix such a multichain $\mcc'$. Note that $x_d$ can be any block of
$\sigma_d$. If $x_d$ is a block $B$ of $\sigma_d$, then there are $1+e(B)$
choices of $x_{d-1}$, where $e(B)$ is the number of edges $(u,v)$ in
$\sigma_{d-1}$ with $u,v\in B$. Thus there are $\sum_{B\in\sigma_d} (1+e(B))$
choices for $x_d$ and $x_{d-1}$. Note that $B$ is a union of several blocks of
$\sigma_{d-1}$. Let $t(B)$ be the number of blocks of $\sigma_{d-1}$ whose union
is $B$. It is easy to see that $e(B)=|B|-t(B)$ for each block $B$ of
$\sigma_{d}$ and $\rank(\sigma_d)-\rank(\sigma_{d-1})=s_d-1=\sum_{B\in\sigma_d}
(t(B)-1)$.  Thus the number of choices of $x_d$ and $x_{d-1}$ is equal to
\begin{align*}
  \sum_{B\in\sigma_d} (1+e(B)) &=   \sum_{B\in\sigma_d} (1+|B|-t(B)) \\
  &= kn - \sum_{B\in\sigma_d} (t(B)-1) \\
 &= kn - (s_d-1).
\end{align*}
The product of this number and \eqref{eq:5} is equal to the formula for the case $d\geq2$.
\end{proof}

\section{Armstrong's conjecture}
\label{sec:armstr-conj}

Let $\tnc{k}{n}$ denote the subposet of $\nck(n)$ whose elements are fixed under
a $180^\circ$ rotation in the circular representation.

Armstrong \cite[Conjecture 4.5.14]{Armstrong} conjectured the following. Let $n$
and $k$ be integers such that $n$ is even and $k$ is arbitrary, or $n$ is odd
and $k$ is even. Then the zeta polynomial of $\tnc{k}{n}$ is given by
  $$Z(\tnc{k}{n},\ell) = \binom{\floor{(k\ell+1)n/2}}{\floor{n/2}}.$$

If $n$ is even then $\tnc{k}{n}$ is isomorphic to $\nckb(n/2)$, whose zeta polynomial is already known. If both $n$ and $k$ are odd, then $\tnc{k}{n}$ is empty. Thus the conjecture is only for $n$ and $k$ such that $n$ is odd and $k$ is even. In this case, the formula is equivalent to the following:
  $$Z(\tnc{2k}{2n+1},\ell) = \binom{n+\ell(2kn+k)}{n}.$$

\begin{thm}\label{thm:14}
Let $n$ and $k$ be positive integers. Then
  $$\tnc{2k}{2n+1}\cong\nctk(n;k).$$
\end{thm}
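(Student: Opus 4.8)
The plan is to construct an explicit order-preserving bijection between $\tnc{2k}{2n+1}$, the set of $2k$-divisible noncrossing partitions of $\nca(2k(2n+1))$ invariant under a $180^\circ$ rotation, and $\nctk(n;k)$, the augmented $2k$-divisible noncrossing partitions of $\nca(2k\cdot n + k) = \nca(k(2n+1))$. The key structural observation is that a partition $\pi\in\nck(2n+1)$ fixed under the $180^\circ$ rotation of the circular representation on $2k(2n+1)$ vertices must have exactly one block invariant under the rotation (since $2k(2n+1)/2 = k(2n+1)$ is odd multiplied by $k$, and more precisely the rotation has order $2$ with no fixed vertex, so a fixed partition has at most one fixed block, and a counting/parity argument forces exactly one such block to exist); this fixed central block plays the role of the single ``augmented'' block whose size need not be divisible by $2k$ when we pass to the quotient.

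First I would set up coordinates: label the $2k(2n+1)$ cyclic positions so that the rotation $\rho$ sends position $i$ to position $i + k(2n+1)$ modulo $2k(2n+1)$. Then I would take a fundamental domain for $\rho$ — say positions $1, 2, \ldots, k(2n+1)$ — and show that a $\rho$-invariant noncrossing partition $\pi$ is determined by its restriction to this arc together with the data of how the unique $\rho$-invariant block $B_0$ wraps around. Restricting the non-fixed blocks to the fundamental domain and recording $B_0$ (which contributes a single block of the quotient, of size $|B_0|/2$, not necessarily divisible by $2k$) should produce a noncrossing partition of $[k(2n+1)]$ in which all blocks but one have size divisible by $2k$ — i.e.\ an element of $\nctk(n;k)$, where the ``$k$'' in the notation $\nck(n;r)$ with $r=k$ records exactly the residue $k(2n+1) = 2k\cdot n + k$. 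Conversely, given an element of $\nctk(n;k)$, I would ``double'' it: take two rotated copies of the generic blocks and glue the exceptional block with its rotated image into one central $\rho$-invariant block, checking that the noncrossing property is preserved (this is where the noncrossing condition on the annulus/circle does real work — one must verify no two doubled arcs cross, which follows because the exceptional block sits in a single gap of the fundamental domain).

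Next I would verify that this map is a poset isomorphism. Both posets are graded (by Section~\ref{sec:k-divis-noncr}, $\rank$ on $\nck(n;r)$ is $n - \bk'(\pi)$, counting blocks with size divisible by $k$; on $\tnc{2k}{2n+1}$ the rank is inherited from $\nck(2n+1)$ restricted to fixed points), so it suffices to check that refinement is respected in both directions and that the map is a bijection. Refinement is preserved because merging blocks in the quotient corresponds to merging the corresponding $\rho$-orbits of blocks upstairs, and the central block of $\pi$ can only merge with other central-orbit material, matching exactly how the exceptional block of an element of $\nck(n;k)$ behaves under refinement. Bijectivity follows from the fundamental-domain argument: ``restrict then record $B_0$'' and ``double then glue'' are mutually inverse.

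The main obstacle I anticipate is the careful bookkeeping around the unique $\rho$-invariant block: I must show it always exists (a partition of an even cyclic set with no fixed point, invariant under the order-$2$ rotation, with an odd number — here $2n+1$ — of block-orbits of a certain kind, forces a fixed block by parity) and that its size, when halved, lands in the correct range so that the quotient genuinely lies in $\nck(n;k)$ rather than, say, $\nck(n;r)$ for the wrong $r$ or in honest $\nck(\cdot)$. A secondary subtlety is confirming that the noncrossing property transfers cleanly under doubling: one should phrase this via the standard (linear) representation on the fundamental domain and argue that arcs of the exceptional block, which all lie within one residual gap, cannot cross the rotated copies — the key point being that the exceptional block occupies a contiguous-in-the-complement region. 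Once these two points are nailed down, the remaining verifications are routine, and Theorem~\ref{thm:14} follows; combining it with Corollary~\ref{thm:2} (with $r = k$) then immediately yields Armstrong's zeta-polynomial formula $Z(\tnc{2k}{2n+1},\ell) = \binom{n + \ell(2kn+k)}{n}$.
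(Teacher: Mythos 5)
Your overall strategy---quotient $\pi\in\tnc{2k}{2n+1}$ by the $180^\circ$ rotation, with the unique centrally symmetric block becoming the exceptional block of an augmented $2k$-divisible partition---is the same as the paper's, and your parity argument for the existence of that block matches the paper's (the total size $2k(2n+1)$ is divisible by $2k$ but not by $4k$, while each non-fixed orbit $\{B,-B\}$ contributes a multiple of $4k$). The paper, however, does not rebuild the correspondence from scratch: it views $\pi$ as an element of $\ncb(2kn+k)$ and invokes the bijection $\psi:\ncb(m)\to\ncbb(m)$ of Section~\ref{sec:interpr-noncr-part-2} together with Proposition~\ref{thm:12}, which already package both the inverse construction and the order-preservation.

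The genuine gap is in your inverse map. ``Take two rotated copies of the generic blocks and glue the exceptional block with its rotated image'' fails whenever a generic block of $\sigma$ nests over the exceptional block $x$. Concretely, take $k=n=1$ and $\sigma=\{\{1,3\},\{2\}\}\in\operatorname{NC}^{(2)}(1;1)$ with exceptional block $\{2\}$: naive doubling on $6$ cyclic vertices produces $\{1,3\}$, $\{4,6\}$ and the central block $\{2,5\}$, and the chord $1$--$3$ crosses the diameter $2$--$5$; the actual preimage in $\tnc{2}{3}$ is $\{\{1,6\},\{3,4\},\{2,5\}\}$. In general a block $A$ with $A\cap[\min x]\neq\emptyset$ and $A\cap[\max x,m]\neq\emptyset$ must be split at $x$ and recombined across the two halves, exactly as in Case~3 of the paper's description of $\psi^{-1}$; your proposed justification (``arcs of the exceptional block, which all lie within one residual gap, cannot cross the rotated copies'') addresses the wrong crossing, since the obstruction is the two copies of a nesting generic block crossing the central zero block, not the central block's own arcs. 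The same nesting phenomenon makes your forward map ambiguous as written (restricting the single block $\{1,6\}$ to the fundamental domain $\{1,2,3\}$ gives $\{1\}$, of size not divisible by $2k$; one must restrict the whole orbit $\{1,6\}\cup\{3,4\}$), and it breaks the claim that ``restrict then record $B_0$'' and ``double then glue'' are mutually inverse. Until the inverse is repaired---e.g.\ by quoting $\psi^{-1}$ and Proposition~\ref{thm:12} as the paper does---the bijection, and hence the isomorphism, is not established.
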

\begin{proof}
  Let $\pi\in \tnc{2k}{2n+1}$. Since $\pi$ is invariant under a $180^\circ$
  rotation, we can consider $\pi$ as an element of $\ncb(2kn+k)$.  Note that the
  sum of the block sizes of $\pi$ is divisible by $2k$ but not by $4k$.  Since
  all the block sizes are divisible by $2k$ and the nonzero blocks appear in
  pairs, $\pi$ must have a zero block of size divisible by $2k$ but not by
  $4k$. Let $B$ be the zero block of $\pi$. 

  Now consider the map $\psi:\ncb(n)\to\ncbb(n)$ defined in
  Section~\ref{sec:interpr-noncr-part-2}. Let $\psi(\pi)=(\sigma,x)$. Since
  $\pi$ has zero block $B$, by Theorem~\ref{thm:simbij}, $x$ is a block whose
  size is divisible by $k$ but not by $2k$. All the other blocks in $\sigma$
  have sizes divisible by $2k$. Thus $\sigma\in\nctk(n;k)$.  Since $x$ is the
  only block of $\sigma$ whose size is not divisible by $2k$, the map
  $\pi\mapsto \sigma$ is a bijection between $\tnc{2k}{2n+1}$ and
  $\nctk(n;k)$. Moreover, if $\pi<\pi'$ and $\pi'\mapsto \sigma'$, then
  $\sigma<\sigma'$. Thus this map is in fact a poset isomorphism.
\end{proof}

Let $\pi\in \tnc{2k}{2n+1}$. Since we can consider $\pi$ as an element in
$\nckb(2n+1)$, the $k$-type $\typek(\pi)$ is defined.  In other words,
$\typek(\pi)=(b;b_1,b_2,\ldots,b_n)$, where $2b_i$ is the number of blocks of
$\pi$ of size $ki$ which are not invariant under a $180^\circ$ rotation and
$b=b_1+b_2+\cdots+b_n$. 
Note that the map $\pi\mapsto\sigma$ in the proof of Theorem~\ref{thm:14}
satisfies $\typek(\pi)=\typek(\sigma)$.
 
By Theorem~\ref{thm:14}, we get the following results immediately from
Theorem~\ref{thm:6}, Corollaries~\ref{thm:1} and \ref{thm:2}.

\begin{thm}
Let $b,b_1,b_2,\ldots,b_n$ and $s_1,s_2,\ldots,s_{\ell+1}$ be nonnegative integers satisfying $\sum_{i=1}^n b_i=b$, $\sum_{i=1}^n i\cdot b_i\leq n$, $\sum_{i=1}^{\ell+1} s_i = n$ and $s_1=n-b$. Then the number of multichains $\pi_1\leq \pi_2\leq \cdots \leq \pi_{\ell}$ in $\tnc{2k}{2n+1}$ with rank jump vector $(s_1,s_2,\ldots,s_{\ell+1})$ and $\typek(\pi_1)=(b;b_1,b_2,\ldots,b_n)$ is equal to
$$\binom{b}{b_1,b_2,\ldots,b_n} \binom{2kn+k}{s_2} \cdots \binom{2kn+k}{s_{\ell+1}}.$$
\end{thm}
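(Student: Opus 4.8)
The plan is to deduce this theorem as an immediate corollary of Theorem~\ref{thm:14} together with Theorem~\ref{thm:6}, following the remark that precedes the statement. The idea is that Theorem~\ref{thm:14} identifies $\tnc{2k}{2n+1}$ with an instance of the augmented $k$-divisible poset for which the chain enumeration has already been established in Theorem~\ref{thm:6}.

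First I would invoke Theorem~\ref{thm:14}: the assignment $\pi\mapsto\sigma$ constructed in its proof is a poset isomorphism from $\tnc{2k}{2n+1}$ onto $\nctk(n;k)$, the poset of augmented $2k$-divisible noncrossing partitions of $[2kn+k]$. Being a poset isomorphism, it sends a multichain $\pi_1\leq\pi_2\leq\cdots\leq\pi_\ell$ in $\tnc{2k}{2n+1}$ bijectively to a multichain $\sigma_1\leq\sigma_2\leq\cdots\leq\sigma_\ell$ in $\nctk(n;k)$; since both posets are graded with the same rank function $\rank(\pi)=n-\bk'(\pi)$, the isomorphism preserves ranks and hence the rank jump vector $(s_1,s_2,\ldots,s_{\ell+1})$. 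It also preserves the $k$-type, as recorded in the remark following Theorem~\ref{thm:14}: $\typek(\pi_1)=\typek(\sigma_1)$. Therefore the multichains being counted correspond bijectively to the multichains $\sigma_1\leq\cdots\leq\sigma_\ell$ in $\nctk(n;k)$ with rank jump vector $(s_1,\ldots,s_{\ell+1})$ and $\typek(\sigma_1)=(b;b_1,\ldots,b_n)$.

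Next I would apply Theorem~\ref{thm:6}, taking the role of the parameter ``$k$'' there to be $2k$ and the role of ``$r$'' to be $k$. The hypothesis $0<r<k$ of that theorem then reads $0<k<2k$, which holds for every positive integer $k$, and the remaining hypotheses on $b,b_1,\ldots,b_n$ and $s_1,\ldots,s_{\ell+1}$ are exactly those assumed here. Since ``$kn+r$'' with these substitutions equals $2k\cdot n+k=2kn+k$, Theorem~\ref{thm:6} gives the count
$$\binom{b}{b_1,b_2,\ldots,b_n}\binom{2kn+k}{s_2}\cdots\binom{2kn+k}{s_{\ell+1}},$$
which is exactly the asserted formula, completing the argument.

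There is essentially no real obstacle here; the plan is a bookkeeping argument. The two points that deserve a moment of care are: (i) confirming that the isomorphism of Theorem~\ref{thm:14} genuinely transports the \emph{rank jump vector} and the \emph{$k$-type}, both of which have already been observed in the text; and (ii) matching parameters correctly when invoking Theorem~\ref{thm:6}, in particular noting that the relevant ground set is $[2kn+k]$ rather than $[2kn]$, so that the binomial coefficients appearing upstairs are $\binom{2kn+k}{s_i}$ and not $\binom{2kn}{s_i}$.
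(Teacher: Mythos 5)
Your proposal is correct and follows exactly the paper's route: the paper deduces this theorem "immediately" from the isomorphism $\tnc{2k}{2n+1}\cong\nctk(n;k)$ of Theorem~\ref{thm:14} (together with the observation that the map $\pi\mapsto\sigma$ preserves the $k$-type) and the chain count of Theorem~\ref{thm:6} applied with parameters $2k$ and $r=k$. Your parameter bookkeeping, including the ground set $[2kn+k]$ and the resulting binomials $\binom{2kn+k}{s_i}$, matches the paper's.
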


\begin{cor}
Let $s_1,s_2,\ldots,s_{\ell+1}$ be nonnegative integers with $s_1+s_2+\cdots+s_{\ell+1}=n$. Then the number of multichains in $\tnc{2k}{2n+1}$ with rank jump vector $(s_1, s_2,\ldots, s_{\ell+1})$ is equal to
  $$\binom{n}{s_1}\binom{2kn+k}{s_2}\cdots\binom{2kn+k}{s_{\ell+1}}.$$
\end{cor}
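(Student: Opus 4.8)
The plan is to deduce this corollary by summing the preceding theorem over all admissible type vectors $(b_1,b_2,\ldots,b_n)$, exactly parallel to the passage from Theorem~\ref{thm:6} to Corollary~\ref{thm:1}. Fix the rank jump vector $(s_1,s_2,\ldots,s_{\ell+1})$ with $s_1+s_2+\cdots+s_{\ell+1}=n$, and set $b=n-s_1$. A multichain $\pi_1\leq\pi_2\leq\cdots\leq\pi_{\ell}$ in $\tnc{2k}{2n+1}$ with this rank jump vector has $\rank(\pi_1)=s_1$, hence $\typek(\pi_1)=(b;b_1,b_2,\ldots,b_n)$ for some $(b_1,\ldots,b_n)$ with $\sum_i b_i=b$; and because the $b_i$ record block sizes of an honest partition, one automatically has $\sum_i i\cdot b_i\leq n$. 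Thus the set of multichains in question is the disjoint union, over all such $(b_1,\ldots,b_n)$, of the sets counted by the previous theorem.

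First I would invoke the previous theorem to get, for each fixed $(b_1,\ldots,b_n)$ with $\sum b_i=b$ and $\sum i b_i\le n$, the count $\binom{b}{b_1,\ldots,b_n}\binom{2kn+k}{s_2}\cdots\binom{2kn+k}{s_{\ell+1}}$. Summing over all such type vectors, the factors $\binom{2kn+k}{s_i}$ pull out, leaving
$$\left(\sum_{\substack{b_1+\cdots+b_n=b\\ b_1+2b_2+\cdots+nb_n\le n}}\binom{b}{b_1,b_2,\ldots,b_n}\right)\binom{2kn+k}{s_2}\cdots\binom{2kn+k}{s_{\ell+1}}.$$
Then I would apply the combinatorial identity already recorded in the excerpt just before Corollary~\ref{thm:1}: for fixed $n$ and $b$, the sum of $\binom{b}{b_1,\ldots,b_n}$ over all $n$-tuples of nonnegative integers with $\sum b_i=b$ and $\sum i b_i\le n$ equals $\binom{n}{b}$. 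Since $b=n-s_1$, we have $\binom{n}{b}=\binom{n}{s_1}$, which yields the claimed formula $\binom{n}{s_1}\binom{2kn+k}{s_2}\cdots\binom{2kn+k}{s_{\ell+1}}$.

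Alternatively, and even more directly, one can simply quote Theorem~\ref{thm:14}, which gives a poset isomorphism $\tnc{2k}{2n+1}\cong\nctk(n;k)$: multichains in $\tnc{2k}{2n+1}$ with rank jump vector $(s_1,\ldots,s_{\ell+1})$ correspond bijectively to such multichains in $\nctk(n;k)$, and Corollary~\ref{thm:1} applied with the substitutions $k\rightsquigarrow 2k$, $r\rightsquigarrow k$ gives precisely $\binom{n}{s_1}\binom{2kn+k}{s_2}\cdots\binom{2kn+k}{s_{\ell+1}}$, since $2kn+k=(2k)n+k$. This is the route the excerpt signals ("we get the following results immediately from \ldots Corollaries~\ref{thm:1} and \ref{thm:2}"), and it makes the corollary essentially a one-line consequence. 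There is no serious obstacle here; the only point requiring a sentence of care is the verification that $s_1\le n$ forces $b=n-s_1\ge 0$ and that the admissibility constraint $\sum i b_i\le n$ on types is automatic, so that the sum really does range over exactly the index set of the cited multinomial identity — but both are immediate.
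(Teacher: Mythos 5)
Your proposal is correct and, in its second ("alternative") route, coincides exactly with the paper's own argument: the paper deduces this corollary immediately from the isomorphism $\tnc{2k}{2n+1}\cong\nctk(n;k)$ of Theorem~\ref{thm:14} together with Corollary~\ref{thm:1} applied with $k\rightsquigarrow 2k$, $r\rightsquigarrow k$. Your first route (summing the preceding theorem over all admissible type vectors, using the identity $\sum\binom{b}{b_1,\ldots,b_n}=\binom{n}{b}$) is a valid minor variant that merely re-traverses the same summation already performed in passing from Theorem~\ref{thm:6} to Corollary~\ref{thm:1}.
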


\begin{cor}\cite[Conjecture 4.5.14]{Armstrong}
  The zeta polynomial of $\tnc{2k}{2n+1}$ is given by
$$Z(\tnc{2k}{2n+1},\ell)=\binom{n+\ell (2kn+k)}{n}.$$
\end{cor}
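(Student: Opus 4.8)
The plan is to deduce this corollary directly from the poset isomorphism of Theorem~\ref{thm:14} together with the zeta polynomial computation of Corollary~\ref{thm:2}. First I would observe that the zeta polynomial $Z(P,\ell)$ depends only on the isomorphism type of $P$, since it merely counts multichains of length $\ell$ in $P$. Hence the isomorphism $\tnc{2k}{2n+1}\cong\nctk(n;k)$ provided by Theorem~\ref{thm:14} immediately yields
$$Z(\tnc{2k}{2n+1},\ell)=Z(\nctk(n;k),\ell).$$

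Next I would apply Corollary~\ref{thm:2}, which asserts that for integers with $0<r<k$ the zeta polynomial of $\nck(n;r)$ equals $\binom{n+\ell(kn+r)}{n}$. The only subtlety is keeping track of which parameter is which: in the specialization to $\nctk(n;k)$ the superscript $2k$ plays the role of the divisibility parameter ``$k$'' in Corollary~\ref{thm:2}, while the augmented parameter ``$r$'' there is taken to be $k$. Since $k$ is a positive integer, the hypothesis $0<r<k$ becomes $0<k<2k$, which holds. Substituting accordingly gives $Z(\nctk(n;k),\ell)=\binom{n+\ell(2kn+k)}{n}$, and combining this with the previous paragraph completes the proof.

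There is essentially no obstacle at this stage; all the real work is already contained in the results invoked, namely the isomorphism $\tnc{2k}{2n+1}\cong\nctk(n;k)$ of Theorem~\ref{thm:14} (which itself rests on the bijection $\psi$ of Section~\ref{sec:interpr-noncr-part-2} and Theorem~\ref{thm:simbij}) and the chain-enumeration formula of Corollary~\ref{thm:2} (obtained from Theorem~\ref{thm:6} by summing over types and rank jump vectors). The one point deserving a careful sentence is the bookkeeping: verifying the parameter identification when specializing the augmented poset $\nck(n;r)$, and recalling the reduction — carried out just before Theorem~\ref{thm:14} — of Armstrong's original statement (with $n$ odd and $k$ even) to the equivalent form $Z(\tnc{2k}{2n+1},\ell)=\binom{n+\ell(2kn+k)}{n}$. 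With those identifications made explicit, the corollary follows at once.
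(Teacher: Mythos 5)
Your proposal is correct and follows exactly the route the paper takes: the paper derives this corollary immediately from the isomorphism $\tnc{2k}{2n+1}\cong\nctk(n;k)$ of Theorem~\ref{thm:14} combined with Corollary~\ref{thm:2} applied with divisibility parameter $2k$ and remainder $r=k$. Your explicit check of the hypothesis $0<r<k$ (i.e.\ $0<k<2k$) and the parameter bookkeeping are exactly the (unstated) verifications behind the paper's one-line deduction.
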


\section{$k$-divisible noncrossing partitions of type $D$}
\label{sec:kd}

Krattenthaler and M{\"u}ller \cite{KrattMuller} found a combinatorial realization of $k$-divisible noncrossing partitions of type $D_n$. Let us review their definition. 

Let $\pi$ be a partition of $\{1,2,\ldots,kn,-1,-2,\ldots,-kn\}$. We represent $\pi$ using the annulus with the integers $1,2,\ldots,kn-k, -1,-2,\ldots,-(kn-k)$ on the outer circle in clockwise order and the integers $kn-k+1,kn-k+2,\ldots,kn, -(kn-k+1), -(kn-k+2), \ldots,-kn$ on the inner circle in counterclockwise order. We represent each block $B$ of $\pi$ as follows. Let $B=\{a_1,a_2,\ldots,a_u\}$. We can assume that $a_1,a_2,\ldots, a_i$ are arranged on the outer circle in clockwise order and $a_{i+1},a_{i+2},\ldots,a_u$ are arranged on the inner circle in counterclockwise order for some $i$. Then we draw curves, which lie inside of the annulus, connecting $a_j$ and $a_{j+1}$ for all $j\in[u]$, where $a_{u+1}=a_1$. If we can draw the curves for all the blocks of $\pi$ such that they do not intersect, then we call $\pi$ a \emph{noncrossing partition on the $(2k(n-1), 2k)$-annulus}.

A \emph{$k$-divisible noncrossing partition of type $D_n$} is a noncrossing partition $\pi$ on the $(2k(n-1), 2k)$-annulus satisfying the following conditions:
\begin{enumerate}
\item If $B\in\pi$, then $-B\in\pi$.
\item For each block $B\in\pi$ with $u$ elements, if $a_1,a_2,\ldots,a_u$ are the elements of $B$ such that $a_1,a_2,\ldots, a_i$ are arranged on the outer circle in clockwise order and $a_{i+1},a_{i+2},\ldots,a_u$ are arranged on the inner circle in counterclockwise order, then $|a_{j+1}| \equiv |a_j| +1 \mod k$ for all $j\in[u]$, where $a_{u+1}=a_1$.
\item If there is a zero block $B$ of $\pi$, i.e.~$B=-B$, then $B$ contains all
  the integers on the inner circle and at least two integers on the outer circle.
\item If every block contains integers entirely on the inner circle or on the
  outer circle, then the blocks on the inner circle are completely determined by
  the blocks on the outer circle in the following way.  A block on the outer
  circle is \emph{visible} if there is no block between this block and the inner
  circle, in other words, we can connect this block and the inner circle using a
  curve lying inside of the annulus without touching any other block. Take a
  visible block $A$ and assume that $a$ is the last element in $A$ when we read
  the elements in $A$ in clockwise order. More precisely, if we read the
  elements in $A\cup(-A)$ in clockwise order, then $a$ is followed by an element
  in $-A$.  There is a unique positive integer $b$ on the inner circle with
  $a\equiv b \mod k$. In this case the integers on the inner circle are
  partitioned into two blocks $B$ and $-B$, where the elements of $B$ are the
  $k$ consecutive integers on the inner circle ending with $b$ in
  counterclockwise order. Note that $B$ is independent of the choice of $A$.
\end{enumerate}

The fourth condition is natural in the sense that it is the only way of
partitioning the integers on the inner circle into two blocks in such a way that
we can connect visible blocks and blocks on the inner circle so that the
resulting partition is still in $\nckd(n)$. We note that the fourth condition is
mistakenly missing in \cite{KrattMuller} (private communication with Christian
Krattenthaler).

Let $\nckd(n)$ denote the set of $k$-divisible noncrossing partitions of type $D_n$. 
See Figures~\ref{fig:annulus1} and \ref{fig:annulus2} for some elements in $\nckd(n)$. 

\begin{figure}
  \centering
\centering
\scriptsize
\begin{pspicture}(-6,-5.5)(6,5.5) 
\pscircle[fillstyle=solid](0,0){2.50000000000000}
\pscircle(0,0){5}
\pspolygon[fillstyle=solid,fillcolor=gray](3.172,-3.865)(2.357,-4.410)(1.451,-4.785)(0.4901,-4.976)
\pspolygon[fillstyle=solid,fillcolor=gray](-3.172,3.865)(-2.357,4.410)(-1.451,4.785)(-0.4901,4.976)
\pscircle[fillstyle=solid](0,0){2.50000000000000}
\pscircle(0,0){5}
\pspolygon[fillstyle=solid,fillcolor=gray](0.4901,4.976)(1.451,4.785)(2.357,4.410)(3.172,3.865)
\pspolygon[fillstyle=solid,fillcolor=gray](-0.4899,-4.976)(-1.452,-4.785)(-2.357,-4.410)(-3.172,-3.865)
\pscircle[fillstyle=solid](0,0){2.50000000000000}
\pscircle(0,0){5}
\pspolygon[fillstyle=solid,fillcolor=gray](3.865,3.172)(4.410,2.357)(4.410,-2.357)(3.865,-3.172)
\pspolygon[fillstyle=solid,fillcolor=gray](-3.865,-3.172)(-4.410,-2.357)(-4.410,2.357)(-3.865,3.172)
\pscircle[fillstyle=solid](0,0){2.50000000000000}
\pscircle(0,0){5}
\pspolygon[fillstyle=solid,fillcolor=gray](4.785,1.451)(4.976,0.4901)(4.976,-0.4901)(4.785,-1.451)
\pspolygon[fillstyle=solid,fillcolor=gray](-4.785,-1.451)(-4.976,-0.4901)(-4.976,0.4900)(-4.785,1.451)
\cput{0.4901,4.976}{84.38}{1}{1}
\cput{-0.4899,-4.976}{264.4}{-1}{-1}
\cput{1.451,4.785}{73.12}{2}{2}
\cput{-1.452,-4.785}{253.1}{-2}{-2}
\cput{2.357,4.410}{61.88}{3}{3}
\cput{-2.357,-4.410}{241.9}{-3}{-3}
\cput{3.172,3.865}{50.62}{4}{4}
\cput{-3.172,-3.865}{230.6}{-4}{-4}
\cput{3.865,3.172}{39.38}{5}{5}
\cput{-3.865,-3.172}{219.4}{-5}{-5}
\cput{4.410,2.357}{28.12}{6}{6}
\cput{-4.410,-2.357}{208.1}{-6}{-6}
\cput{4.785,1.451}{16.88}{7}{7}
\cput{-4.785,-1.451}{196.9}{-7}{-7}
\cput{4.976,0.4901}{5.625}{8}{8}
\cput{-4.976,-0.4901}{185.6}{-8}{-8}
\cput{4.976,-0.4901}{-5.625}{9}{9}
\cput{-4.976,0.4900}{174.4}{-9}{-9}
\cput{4.785,-1.451}{-16.88}{10}{10}
\cput{-4.785,1.451}{163.1}{-10}{-10}
\cput{4.410,-2.357}{-28.12}{11}{11}
\cput{-4.410,2.357}{151.9}{-11}{-11}
\cput{3.865,-3.172}{-39.38}{12}{12}
\cput{-3.865,3.172}{140.6}{-12}{-12}
\cput{3.172,-3.865}{-50.62}{13}{13}
\cput{-3.172,3.865}{129.4}{-13}{-13}
\cput{2.357,-4.410}{-61.88}{14}{14}
\cput{-2.357,4.410}{118.1}{-14}{-14}
\cput{1.451,-4.785}{-73.12}{15}{15}
\cput{-1.451,4.785}{106.9}{-15}{-15}
\cput{0.4901,-4.976}{-84.38}{16}{16}
\cput{-0.4901,4.976}{95.62}{-16}{-16}
\cput{-0.9567,2.310}{292.5}{17}{17}
\cput{0.9567,-2.310}{472.5}{-17}{-17}
\cput{-2.310,0.9567}{337.5}{18}{18}
\cput{2.310,-0.9567}{517.5}{-18}{-18}
\cput{-2.310,-0.9567}{382.5}{19}{19}
\cput{2.310,0.9568}{562.5}{-19}{-19}
\cput{-0.9567,-2.310}{427.5}{20}{20}
\cput{0.9567,2.310}{607.5}{-20}{-20}
\end{pspicture}  
\begin{pspicture}(-6,-5.5)(6,5.5) 
\pscircle[fillstyle=solid](0,0){2.50000000000000}
\pscircle(0,0){5}
\pspolygon[fillstyle=solid,fillcolor=gray](3.172,-3.865)(2.357,-4.410)(1.451,-4.785)(0.4901,-4.976)(-0.4899,-4.976)(-1.452,-4.785)(-2.357,-4.410)(-3.172,-3.865)
\pspolygon[fillstyle=solid,fillcolor=gray](-3.172,3.865)(-2.357,4.410)(-1.451,4.785)(-0.4901,4.976)(0.4901,4.976)(1.451,4.785)(2.357,4.410)(3.172,3.865)
\pscircle[fillstyle=solid](0,0){2.50000000000000}
\pscircle(0,0){5}
\pspolygon[fillstyle=solid,fillcolor=gray](3.865,3.172)(4.410,2.357)(4.410,-2.357)(3.865,-3.172)(0.9567,-2.310)(2.310,-0.9567)(2.310,0.9568)(0.9567,2.310)
\pspolygon[fillstyle=solid,fillcolor=gray](-3.865,-3.172)(-4.410,-2.357)(-4.410,2.357)(-3.865,3.172)(-0.9567,2.310)(-2.310,0.9567)(-2.310,-0.9567)(-0.9567,-2.310)
\pscircle[fillstyle=solid](0,0){2.50000000000000}
\pscircle(0,0){5}
\pspolygon[fillstyle=solid,fillcolor=gray](4.785,1.451)(4.976,0.4901)(4.976,-0.4901)(4.785,-1.451)
\pspolygon[fillstyle=solid,fillcolor=gray](-4.785,-1.451)(-4.976,-0.4901)(-4.976,0.4900)(-4.785,1.451)
\cput{0.4901,4.976}{84.38}{1}{1}
\cput{-0.4899,-4.976}{264.4}{-1}{-1}
\cput{1.451,4.785}{73.12}{2}{2}
\cput{-1.452,-4.785}{253.1}{-2}{-2}
\cput{2.357,4.410}{61.88}{3}{3}
\cput{-2.357,-4.410}{241.9}{-3}{-3}
\cput{3.172,3.865}{50.62}{4}{4}
\cput{-3.172,-3.865}{230.6}{-4}{-4}
\cput{3.865,3.172}{39.38}{5}{5}
\cput{-3.865,-3.172}{219.4}{-5}{-5}
\cput{4.410,2.357}{28.12}{6}{6}
\cput{-4.410,-2.357}{208.1}{-6}{-6}
\cput{4.785,1.451}{16.88}{7}{7}
\cput{-4.785,-1.451}{196.9}{-7}{-7}
\cput{4.976,0.4901}{5.625}{8}{8}
\cput{-4.976,-0.4901}{185.6}{-8}{-8}
\cput{4.976,-0.4901}{-5.625}{9}{9}
\cput{-4.976,0.4900}{174.4}{-9}{-9}
\cput{4.785,-1.451}{-16.88}{10}{10}
\cput{-4.785,1.451}{163.1}{-10}{-10}
\cput{4.410,-2.357}{-28.12}{11}{11}
\cput{-4.410,2.357}{151.9}{-11}{-11}
\cput{3.865,-3.172}{-39.38}{12}{12}
\cput{-3.865,3.172}{140.6}{-12}{-12}
\cput{3.172,-3.865}{-50.62}{13}{13}
\cput{-3.172,3.865}{129.4}{-13}{-13}
\cput{2.357,-4.410}{-61.88}{14}{14}
\cput{-2.357,4.410}{118.1}{-14}{-14}
\cput{1.451,-4.785}{-73.12}{15}{15}
\cput{-1.451,4.785}{106.9}{-15}{-15}
\cput{0.4901,-4.976}{-84.38}{16}{16}
\cput{-0.4901,4.976}{95.62}{-16}{-16}
\cput{-0.9567,2.310}{292.5}{17}{17}
\cput{0.9567,-2.310}{472.5}{-17}{-17}
\cput{-2.310,0.9567}{337.5}{18}{18}
\cput{2.310,-0.9567}{517.5}{-18}{-18}
\cput{-2.310,-0.9567}{382.5}{19}{19}
\cput{2.310,0.9568}{562.5}{-19}{-19}
\cput{-0.9567,-2.310}{427.5}{20}{20}
\cput{0.9567,2.310}{607.5}{-20}{-20}
\end{pspicture}  
 \caption{Two elements in $\NC_D^{(4)}(5)$. In the left diagram, the integers on the inner circle form two blocks $\pm\{17,18,19,20\}$.}
  \label{fig:annulus1}
\end{figure}
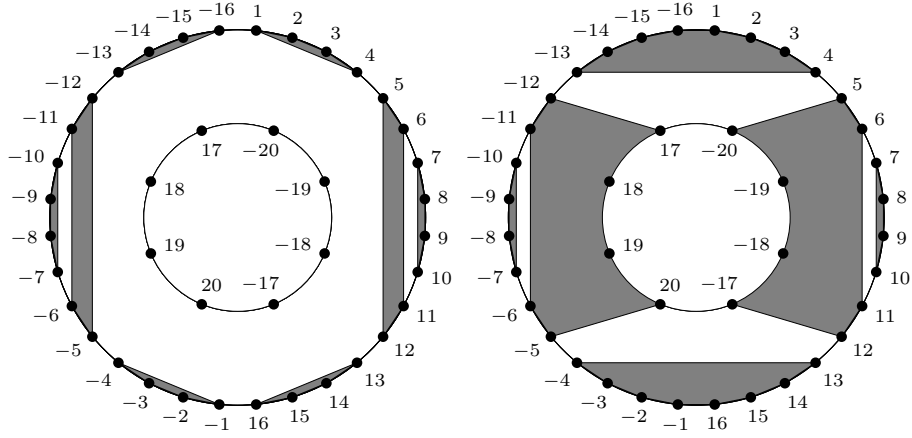

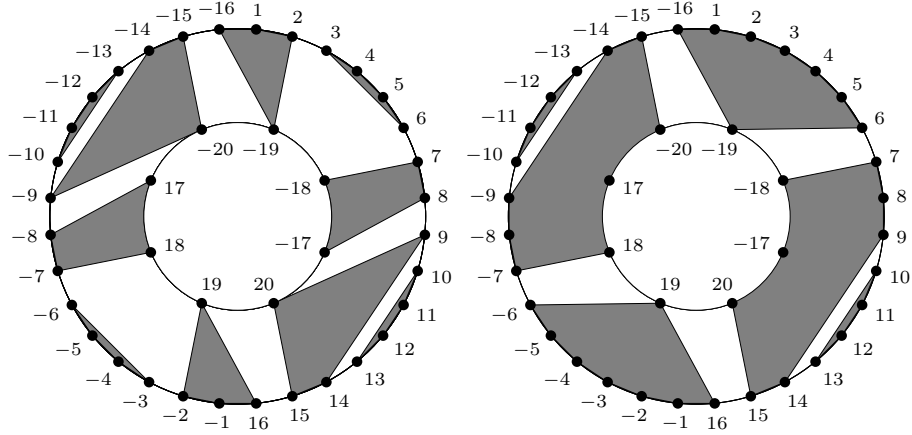
\begin{figure}
  \centering \scriptsize
\begin{pspicture}(-6,-5.5)(6,5.5) 
\pscircle[fillstyle=solid](0,0){2.50000000000000}
\pscircle(0,0){5}
\pspolygon[fillstyle=solid,fillcolor=gray](-0.4901,4.976)(0.4901,4.976)(1.451,4.785)(0.9567,2.310)
\pspolygon[fillstyle=solid,fillcolor=gray](0.4901,-4.976)(-0.4899,-4.976)(-1.452,-4.785)(-0.9567,-2.310)
\pscircle[fillstyle=solid](0,0){2.50000000000000}
\pscircle(0,0){5}
\pspolygon[fillstyle=solid,fillcolor=gray](2.357,4.410)(3.172,3.865)(3.865,3.172)(4.410,2.357)
\pspolygon[fillstyle=solid,fillcolor=gray](-2.357,-4.410)(-3.172,-3.865)(-3.865,-3.172)(-4.410,-2.357)
\pscircle[fillstyle=solid](0,0){2.50000000000000}
\pscircle(0,0){5}
\pspolygon[fillstyle=solid,fillcolor=gray](4.785,1.451)(4.976,0.4901)(2.310,-0.9567)(2.310,0.9568)
\pspolygon[fillstyle=solid,fillcolor=gray](-4.785,-1.451)(-4.976,-0.4901)(-2.310,0.9567)(-2.310,-0.9567)
\pscircle[fillstyle=solid](0,0){2.50000000000000}
\pscircle(0,0){5}
\pspolygon[fillstyle=solid,fillcolor=gray](4.976,-0.4901)(2.357,-4.410)(1.451,-4.785)(0.9567,-2.310)
\pspolygon[fillstyle=solid,fillcolor=gray](-4.976,0.4900)(-2.357,4.410)(-1.451,4.785)(-0.9568,2.310)
\pscircle[fillstyle=solid](0,0){2.50000000000000}
\pscircle(0,0){5}
\pspolygon[fillstyle=solid,fillcolor=gray](4.785,-1.451)(4.410,-2.357)(3.865,-3.172)(3.172,-3.865)
\pspolygon[fillstyle=solid,fillcolor=gray](-4.785,1.451)(-4.410,2.357)(-3.865,3.172)(-3.172,3.865)
\cput{0.4901,4.976}{84.38}{1}{1}
\cput{-0.4899,-4.976}{264.4}{-1}{-1}
\cput{1.451,4.785}{73.12}{2}{2}
\cput{-1.452,-4.785}{253.1}{-2}{-2}
\cput{2.357,4.410}{61.88}{3}{3}
\cput{-2.357,-4.410}{241.9}{-3}{-3}
\cput{3.172,3.865}{50.62}{4}{4}
\cput{-3.172,-3.865}{230.6}{-4}{-4}
\cput{3.865,3.172}{39.38}{5}{5}
\cput{-3.865,-3.172}{219.4}{-5}{-5}
\cput{4.410,2.357}{28.12}{6}{6}
\cput{-4.410,-2.357}{208.1}{-6}{-6}
\cput{4.785,1.451}{16.88}{7}{7}
\cput{-4.785,-1.451}{196.9}{-7}{-7}
\cput{4.976,0.4901}{5.625}{8}{8}
\cput{-4.976,-0.4901}{185.6}{-8}{-8}
\cput{4.976,-0.4901}{-5.625}{9}{9}
\cput{-4.976,0.4900}{174.4}{-9}{-9}
\cput{4.785,-1.451}{-16.88}{10}{10}
\cput{-4.785,1.451}{163.1}{-10}{-10}
\cput{4.410,-2.357}{-28.12}{11}{11}
\cput{-4.410,2.357}{151.9}{-11}{-11}
\cput{3.865,-3.172}{-39.38}{12}{12}
\cput{-3.865,3.172}{140.6}{-12}{-12}
\cput{3.172,-3.865}{-50.62}{13}{13}
\cput{-3.172,3.865}{129.4}{-13}{-13}
\cput{2.357,-4.410}{-61.88}{14}{14}
\cput{-2.357,4.410}{118.1}{-14}{-14}
\cput{1.451,-4.785}{-73.12}{15}{15}
\cput{-1.451,4.785}{106.9}{-15}{-15}
\cput{0.4901,-4.976}{-84.38}{16}{16}
\cput{-0.4901,4.976}{95.62}{-16}{-16}
\cput{-2.310,0.9567}{337.5}{17}{17}
\cput{2.310,-0.9567}{517.5}{-17}{-17}
\cput{-2.310,-0.9567}{382.5}{18}{18}
\cput{2.310,0.9568}{562.5}{-18}{-18}
\cput{-0.9567,-2.310}{427.5}{19}{19}
\cput{0.9567,2.310}{607.5}{-19}{-19}
\cput{0.9567,-2.310}{472.5}{20}{20}
\cput{-0.9568,2.310}{652.5}{-20}{-20}
\end{pspicture}  
\begin{pspicture}(-6,-5.5)(6,5.5) 
\pscircle[fillstyle=solid](0,0){2.50000000000000}
\pscircle(0,0){5}
\pspolygon[fillstyle=solid,fillcolor=gray](-0.4901,4.976)(0.4901,4.976)(1.451,4.785)(2.357,4.410)(3.172,3.865)(3.865,3.172)(4.410,2.357)(0.9567,2.310)
\pspolygon[fillstyle=solid,fillcolor=gray](0.4901,-4.976)(-0.4899,-4.976)(-1.452,-4.785)(-2.357,-4.410)(-3.172,-3.865)(-3.865,-3.172)(-4.410,-2.357)(-0.9567,-2.310)
\pscircle[fillstyle=solid](0,0){2.50000000000000}
\pscircle(0,0){5}
\pspolygon[fillstyle=solid,fillcolor=gray](4.785,1.451)(4.976,0.4901)(4.976,-0.4901)(2.357,-4.410)(1.451,-4.785)(0.9567,-2.310)(2.310,-0.9567)(2.310,0.9568)
\pspolygon[fillstyle=solid,fillcolor=gray](-4.785,-1.451)(-4.976,-0.4901)(-4.976,0.4900)(-2.357,4.410)(-1.451,4.785)(-0.9568,2.310)(-2.310,0.9567)(-2.310,-0.9567)
\pscircle[fillstyle=solid](0,0){2.50000000000000}
\pscircle(0,0){5}
\pspolygon[fillstyle=solid,fillcolor=gray](4.785,-1.451)(4.410,-2.357)(3.865,-3.172)(3.172,-3.865)
\pspolygon[fillstyle=solid,fillcolor=gray](-4.785,1.451)(-4.410,2.357)(-3.865,3.172)(-3.172,3.865)
\cput{0.4901,4.976}{84.38}{1}{1}
\cput{-0.4899,-4.976}{264.4}{-1}{-1}
\cput{1.451,4.785}{73.12}{2}{2}
\cput{-1.452,-4.785}{253.1}{-2}{-2}
\cput{2.357,4.410}{61.88}{3}{3}
\cput{-2.357,-4.410}{241.9}{-3}{-3}
\cput{3.172,3.865}{50.62}{4}{4}
\cput{-3.172,-3.865}{230.6}{-4}{-4}
\cput{3.865,3.172}{39.38}{5}{5}
\cput{-3.865,-3.172}{219.4}{-5}{-5}
\cput{4.410,2.357}{28.12}{6}{6}
\cput{-4.410,-2.357}{208.1}{-6}{-6}
\cput{4.785,1.451}{16.88}{7}{7}
\cput{-4.785,-1.451}{196.9}{-7}{-7}
\cput{4.976,0.4901}{5.625}{8}{8}
\cput{-4.976,-0.4901}{185.6}{-8}{-8}
\cput{4.976,-0.4901}{-5.625}{9}{9}
\cput{-4.976,0.4900}{174.4}{-9}{-9}
\cput{4.785,-1.451}{-16.88}{10}{10}
\cput{-4.785,1.451}{163.1}{-10}{-10}
\cput{4.410,-2.357}{-28.12}{11}{11}
\cput{-4.410,2.357}{151.9}{-11}{-11}
\cput{3.865,-3.172}{-39.38}{12}{12}
\cput{-3.865,3.172}{140.6}{-12}{-12}
\cput{3.172,-3.865}{-50.62}{13}{13}
\cput{-3.172,3.865}{129.4}{-13}{-13}
\cput{2.357,-4.410}{-61.88}{14}{14}
\cput{-2.357,4.410}{118.1}{-14}{-14}
\cput{1.451,-4.785}{-73.12}{15}{15}
\cput{-1.451,4.785}{106.9}{-15}{-15}
\cput{0.4901,-4.976}{-84.38}{16}{16}
\cput{-0.4901,4.976}{95.62}{-16}{-16}
\cput{-2.310,0.9567}{337.5}{17}{17}
\cput{2.310,-0.9567}{517.5}{-17}{-17}
\cput{-2.310,-0.9567}{382.5}{18}{18}
\cput{2.310,0.9568}{562.5}{-18}{-18}
\cput{-0.9567,-2.310}{427.5}{19}{19}
\cput{0.9567,2.310}{607.5}{-19}{-19}
\cput{0.9567,-2.310}{472.5}{20}{20}
\cput{-0.9568,2.310}{652.5}{-20}{-20}
\end{pspicture}  
 \caption{Two elements in $\NC_D^{(4)}(5)$.}
 \label{fig:annulus2}
\end{figure}

Let $\pi\in\nckd(n)$. If a nonzero block $B$ of $\pi$ contains integers both on
the outer circle and on the inner circle, we call $B$ an \emph{annular block} of
$\pi$. Let $\tnckd(n)$ denote the set of $\pi\in\nckd(n)$ containing at least
one annular block.

Consider an $(\ell+1)$-tuple $(L,R_1,\ldots,R_\ell)$ of sets
$L,R_1,\ldots,R_\ell\subset[n]$ with $|L|\leq|R_1|+\cdots+|R_\ell|$. Let
$R=R_1\uplus R_2\uplus\cdots\uplus R_\ell$ be the disjoint union of
$R_1,R_2,\ldots,R_\ell$. We regard the elements in $L$ and $R$ as left and right
parentheses of $1,2,\ldots,n$ in cyclic order. If $|L|< |R|$, then there are
several right parentheses, which do not have corresponding left parentheses. We
call such a right parenthesis \emph{unmatched}.  Let $R'$ be the set of
unmatched right parentheses. Let $f$ be a function from $R'$ to positive
integers. Then we can represent the $(\ell+2)$-tuple $(L,R_1,\ldots,R_\ell,f)$
as a \emph{doubly labeled cyclic parenthesization} as we did in
Section~\ref{sec:k-divis-noncr} together with the additional label $f(x)$ for
each unmatched right parenthesis $x$.

\begin{example}\label{ex:2}
Let $L=\{2,3,5,6,12,13\}$, $R_1=\{4,8,12\}$, $R_2=\{4,9\}$, $R_3=\{1\}$ and $R_4=\{4,7\}$. Then $(L,R_1,R_2,R_3,R_4)$ is represented by the following parenthesization:
\begin{equation}
  \label{eq:21}
1)_3\:\: (2\:\: (3\:\: 4)_1)_2)_4\:\: (5\:\: (6\:\: 7)_4\:\: 8)_1\:\: 9)_2\:\: 10\:\: 11\:\: (12)_1)_3\:\: (13
\end{equation}
The unmatched right parentheses are $4\in R_4$, $9\in R_2$ and $12\in R_3$. Let $f$ be the function on these unmatched right parentheses with values $3,5$ and $2$ respectively. Then $(L,R_1,R_2,R_3,f)$ is represented by the following doubly labeled cyclic parenthesization:
  \begin{equation}
    \label{eq:20}
1)_3\:\: (2\:\: (3\:\: 4)_1)_2)_4^3\:\: (5\:\: (6\:\: 7)_4\:\: 8)_1\:\: 9)_2^5\:\: 10\:\: 11\:\: (12)_1)_3^2\:\: (13
  \end{equation}
\end{example}

Let $P_D(n,\ell)$ denote the set of $(\ell+2)$-tuples $(L,R_1,\ldots,R_\ell, f)$ where $L,R_1,\ldots,R_\ell\subset[n]$, $|L|\leq |R_1|+\cdots+|R_\ell|$ and $f$ is a function from the unmatched right parentheses to positive integers. 

Let $(L,R_1,\ldots,R_\ell, f)\in P_D(n,\ell)$. Let  $R=R_1\uplus R_2\uplus\cdots\uplus R_\ell$ and let $R'$ be the set of unmatched right parentheses.
For each right parenthesis $x\in R$, we define the \emph{size of $x$} as follows. If $x\not\in R'$, then the size of $x$ is defined to be the number of integers enclosed by $x$ and its corresponding left parenthesis, which are not enclosed by any other pair of parentheses. If $x\in R'$, then the size of $x$ is defined to be $f(x)$ plus the number of integers between $x$ and its previous unmatched right parenthesis which are not enclosed by any pair of parentheses.

\begin{example}
Let $(L,R_1,R_2,R_3,R_4,f)$ be the element in $P_D(13,4)$ represented by \eqref{eq:20}. If we draw only the right parentheses labeled with their sizes, then we get the following:
  \begin{equation}
    \label{eq:22}
1)_3^2\:\: 2\:\: 3\:\: 4)_1^2)_2^1)_4^3\:\: 5\:\: 6\:\: 7)_4^2\:\: 8)_1^2\:\: 9)_2^6\:\: 10\:\: 11\:\: 12)_1^1)_3^4\:\: 13
 \end{equation}

Note that we can recover \eqref{eq:20} from \eqref{eq:22}.
\end{example}

Let $\pdk(n,\ell)$ denote the set of $(\ell+2)$-tuples $(L,R_1,\ldots,R_\ell, f)\in P_D(kn-k,\ell)$ with the following conditions. As before, $R=R_1\uplus R_2\uplus\cdots\uplus R_\ell$ and
$R'$ is the set of unmatched right parentheses.
\begin{enumerate}
\item If $|L|=|R|$, then there is at least one integer which is not enclosed by any pair of parentheses.
\item If $|L|<|R|$, then $\sum_{x\in R'}f(x)=k$.
\item The size of each right parenthesis $x\in R$ is divisible by $k$.
\end{enumerate}

For $(L,R_1,\ldots,R_\ell, f) \in\pdk(n,\ell)$, let $\typek(L,R_1,\ldots,R_\ell, f) = (b;b_1,b_2,\ldots,b_n)$, where $b_i$ is the number of right parentheses whose sizes are equal to $ki$ for $i\in[n]$, and $b=b_1+b_2+\cdots+b_n$. Note that if $|L|=|R|$ then $\sum_{i=1}^n i\cdot b_i\leq n-2$, and if $|L|<|R|$ then $\sum_{i=1}^n i\cdot b_i=n$.
 
\begin{lem}
  \label{thm:17}
Let $b,b_1,b_2,\ldots,b_n$ and $c_1,c_2,\ldots,c_\ell$ be nonnegative integers
with $b=b_1+b_2+\cdots+b_n=c_1+c_2+\cdots+c_\ell$, $\sum_{i=1}^n i\cdot
b_i\leq n$ and $\sum_{i=1}^n i\cdot b_i\neq n-1$. Then the number of elements $P=(L,R_1,\ldots,R_\ell, f)$ in $\pdk(n,\ell)$ with $\typek(P)=(b;b_1,b_2,\ldots,b_n)$ and $(|R_1|,|R_2|,\ldots,|R_\ell|)=(c_1,c_2,\ldots,c_\ell)$ is equal to
$$\binom{b}{b_1,b_2,\ldots,b_n} \binom{k(n-1)}{c_1} \cdots \binom{k(n-1) }{c_{\ell}}.$$
\end{lem}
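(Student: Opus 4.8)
The plan is to prove Lemma~\ref{thm:17} by the same bookkeeping as in the proof of Lemma~\ref{thm:16}, the only new feature being the function $f$ attached to the unmatched right parentheses. Fix a $k$-type $(b;b_1,\dots,b_n)$ with $\sum_{i=1}^n i\cdot b_i\le n$ and $\sum_{i=1}^n i\cdot b_i\ne n-1$, and let $c_1,\dots,c_\ell$ be as in the statement. For any $P=(L,R_1,\dots,R_\ell,f)\in\pdk(n,\ell)$ with $\typek(P)=(b;b_1,\dots,b_n)$, the $b$ right parentheses of $P$ have sizes whose sum equals $k\sum_{i=1}^n i\cdot b_i$, while the ground set has $k(n-1)$ integers. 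Hence, if $|L|=|R|$ (every right parenthesis is matched and $f$ is the empty function), condition~(1) forces $k\sum i\cdot b_i<k(n-1)$, i.e.\ $\sum i\cdot b_i\le n-2$; and if $|L|<|R|$ then, as recorded below, $\sum i\cdot b_i=n$. Thus the type alone dictates which of the two regimes occurs, and the excluded value $\sum i\cdot b_i=n-1$ is precisely the one for which neither regime can occur.

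In the regime $\sum_{i=1}^n i\cdot b_i\le n-2$, an element $P\in\pdk(n,\ell)$ of the prescribed type is, after deleting the empty function $f$, nothing but an element of $P(k(n-1),\ell)$ in which every right parenthesis has size divisible by $k$ and $b_i$ of them have size $ki$; condition~(1) is automatic because the sizes sum to at most $k(n-2)<k(n-1)$, so some integer is unenclosed. Applying Lemma~\ref{thm:16} with $n$ replaced by $k(n-1)$ and with the type $(b;b_1',\dots,b_{k(n-1)}')$ in which $b_{ki}'=b_i$ and $b_j'=0$ otherwise (so that $\binom{b}{b_1',\dots,b_{k(n-1)}'}=\binom{b}{b_1,\dots,b_n}$) then gives exactly $\binom{b}{b_1,\dots,b_n}\binom{k(n-1)}{c_1}\cdots\binom{k(n-1)}{c_\ell}$ such $P$, as claimed.

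In the remaining regime $\sum_{i=1}^n i\cdot b_i=n$, I would mimic the proof of Lemma~\ref{thm:16} directly. Associate to $P$ the sets $R_1,\dots,R_\ell$ together with the sequence of sizes of the right parentheses read off in the canonical order (the elements of $R_1$ in increasing order, then those of $R_2$, and so on); this sequence has length $b$ and is an arrangement of the multiset containing $ki$ with multiplicity $b_i$, so there are $\binom{b}{b_1,\dots,b_n}$ possibilities for it and $\binom{k(n-1)}{c_1}\cdots\binom{k(n-1)}{c_\ell}$ independent possibilities for the $R_i$. It remains to check that this association is a bijection. Reading off the sizes is trivial; the substance is the inverse map, which must reconstruct $L$, the set $R'$ of unmatched right parentheses, and the function $f$ from the $R_i$ and the prescribed sizes. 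I would carry this out greedily: insert a left parenthesis so that each right parenthesis of size $ki$ encloses exactly $ki$ hitherto unenclosed integers lying to its left in cyclic order whenever that many are still available, declare unmatched those right parentheses for which that many are not available, and set $f(x)=s(x)$ minus the number of unenclosed integers strictly between $x$ and the preceding unmatched right parenthesis. Since the sizes sum to $kn>k(n-1)$, at least one right parenthesis is left unmatched; and since every integer is either enclosed by a matched pair or unenclosed, the number enclosed plus the number unenclosed equals $k(n-1)$, while the total of all sizes, $kn$, equals the number enclosed plus $\sum_{x\in R'}f(x)$ plus the number unenclosed (the latter being $\sum_{x\in R'}$ of the gaps). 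Subtracting the two identities yields $\sum_{x\in R'}f(x)=k$, so conditions~(2) and~(3) hold automatically and the reconstructed tuple lies in $\pdk(n,\ell)$.

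The step I expect to be the main obstacle is establishing that this greedy reconstruction is well defined and bijective on the \emph{cyclic} parenthesization: unlike the acyclic situation there is no distinguished starting point, so one must argue that the outcome — in particular the set $R'$, which is resolved only after all possible matchings have been performed — is independent of any ordering used to perform the matchings, and that each value $f(x)$ is a positive integer (equivalently, that a right parenthesis declared unmatched always has a gap strictly smaller than its size). I would handle this, as in Lemma~\ref{thm:16}, by showing that once the sizes are fixed the positions of the left parentheses are forced step by step, with the surplus $kn-k(n-1)=k$ of total size over the number of integers being absorbed exactly by the labels $f(x)$ of the unmatched right parentheses.
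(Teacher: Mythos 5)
Your overall strategy coincides with the paper's: the paper disposes of this lemma with the single remark that it follows by ``the same argument as in the proof of Lemma~\ref{thm:16}'', i.e.\ an element of $\pdk(n,\ell)$ is determined by the sets $R_i$ together with the sizes of the right parentheses. Your two-regime split (all parentheses matched when $\sum i\cdot b_i\le n-2$, some unmatched when $\sum i\cdot b_i=n$, the value $n-1$ being impossible) and the accounting identity forcing $\sum_{x\in R'}f(x)=k$ are correct and are exactly the right way to flesh that remark out.

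The one genuine problem is the reconstruction rule you propose in the regime $\sum i\cdot b_i=n$. Declaring a right parenthesis matched ``whenever that many unenclosed integers are still available'' and unmatched otherwise is not the correct criterion, and the order-independence you flag as the main obstacle in fact fails for that rule. Take $k=2$, $n=3$, $\ell=1$, $R_1=\{1,2\}$, with prescribed size $4$ for the parenthesis after $1$ and size $2$ for the parenthesis after $2$ (so $b_1=b_2=1$). At the start each parenthesis has enough available integers to its left, so your rule matches whichever it considers first; the two orders produce $L=\{1\}$ and $L=\{2\}$ respectively, and in either case re-reading the resulting parenthesization gives an innermost pair of size $1$, not the prescribed sizes. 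The unique correct element is $L=\emptyset$ with both parentheses unmatched and $f\equiv 1$. The rule you need is the deadlock criterion implicit in the paper's map $\tau$: a right parenthesis of size $s$ may be closed only when the $s$ not-yet-removed integers ending at it contain no other not-yet-processed right parenthesis, and the unmatched parentheses are those that can never be closed. That procedure is confluent, and the positivity of $f(x)=s(x)-g(x)$ (size minus gap) follows from it: if a stuck parenthesis had $s(x)\le g(x)$, its would-be block would lie entirely in its own arc between consecutive unmatched parentheses and so, once all matched pairs are removed, would contain no other remaining right parenthesis, contradicting that it is stuck. With the rule corrected in this way, the rest of your write-up goes through.
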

\begin{proof}
This can be done by the same argument as in the proof of Lemma~\ref{thm:16}.
\end{proof}

Given $P=(L,R_1,\ldots,R_\ell, f) \in\pdk(n,\ell)$, we define $\taud(P)$ as
follows.  Let $R=R_1\uplus R_2\uplus\cdots\uplus R_\ell$ and $R'$ be the set of
unmatched right parentheses. We will first define $\pi\in\nckd(n)$ as follows.
We place a left parenthesis before each occurrence of $i$ and $-i$ for each
$i\in L$ and right parentheses with subscripts $j_1,j_2,\ldots,j_t$ after each
occurrence of $i$ and $-i$ if $i$ is in $R_{j_1}, R_{j_2},\ldots,R_{j_t}$ with
$j_1<j_2<\cdots<j_t$ in the following infinite cyclic sequence:
\begin{equation}
  \label{eq:18}
\ldots,1,2,\ldots,k(n-1), -1,-2,\ldots,-k(n-1), 1,2,\ldots  
\end{equation}

Find a matching pair of parentheses which do not enclose any other parenthesis in \eqref{eq:18}. Remove the pair of matching parentheses and the integers enclosed by them and all of their infinite copies. The removed integers form a block of $\pi$. We repeat this procedure until there is no matching pair of parentheses. Then we have the following three possibilities. Now consider the $(2k(n-1),2k)$-annulus.

\begin{description}
\item[Case 1] There is no unmatched right parenthesis. This happens when $|L|=|R|$. By definition of $\pdk(n,\ell)$, there are remaining integers. Then these integers together with the integers on the inner circle form a zero block of $\pi$. We define $\taud(P)$ to be the set $\{\pi\}$ consisting of $\pi$ alone.
\item[Case 2] There is an unmatched right parenthesis, and there is no remaining
  integer.  Since the size of each right parenthesis is divisible by $k$, there
  must be only one unmatched right parenthesis $x$ with $f(x)=k$. Assume that
  $x$ was a right parenthesis of $a$. Let $b$ be the unique positive integer
  with $a\equiv b \mod k$ in the inner circle of the $(2k(n-1),2k)$-annulus.
  Then we add the two blocks $B$ and $-B$ to $\pi$, where the elements of $B$
  are the $k$ consecutive integers on the inner circle ending with $b$ in
  counterclockwise order.  We define $\taud(P)$ to be the set $\{\pi\}$
  consisting of $\pi$ alone.
\item[Case 3] There are unmatched right parentheses and remaining integers. Let $Y$ be the set of remaining integers. Let $R'=\{x_1,x_2,\ldots,x_j\}$ with $x_1<x_2<\cdots<x_j$. Then the right parentheses coming from $R'$ divide the initial infinite cyclic sequence \eqref{eq:18} into the $2j$ blocks $\pm X_i$'s, where
  \begin{align*}
    X_1 &= \{-x_j+1,\ldots,-k(n-1),1,2,\ldots,x_1\},\\
   X_i &= \{x_{i-1}+1, x_{i-1}+2,\ldots,x_i\} \quad \mbox{for $i=2,3,\ldots,j$.}
  \end{align*}
  For $i\in[j]$, we claim that $X_i\cap Y\ne\emptyset$. If $X_i\cap
  Y=\emptyset$, then the unmatched right parenthesis $x_i$ has size $f(x_i)$
  which is divisible by $k$. Since $\sum_{x\in R'} f(x)=k$ and $f(x)>0$, we get
  $f(x_i)=k$, $j=1$, and $Y=\emptyset$, which is a contradiction to the
  assumption in this case. Thus we have $X_i\cap Y\ne\emptyset$.

  Let $A_i=X_i\cap Y$ and $a_i=\max(A_i)$ for $i\in[j]$. We define $B_i^+$ and
  $B_i^-$ for $i\in[j]$ as follows. We first define $B_j^+$ and $B_j^-$. There
  is a unique positive integer $c_j$ on the inner circle such that $c_j\equiv
  a_j+1 \mod k$. Let $B_j^+$ (resp.~$B_j^-$) be the set obtained from $A_j$ by
  adding $f(a_j)$ consecutive integers on the inner circle starting from $c_j$
  (resp.~$-c_j$) in counterclockwise order. We define $B_{j-1}^+$
  (resp.~$B_{j-1}^-$) to be the set obtained from $A_{j-1}$ by adding the
  $f(a_{j-1})$ consecutive integers next to the integers used for $B_{j}^+$
  (resp.~$B_j^-$) on the inner circle. Repeat this until we get $B_1^+$ and
  $B_1^-$. Let $\pi^+$ (resp.~$\pi^-$) be the partition obtained from $\pi$ by
  adding $\pm B_1^+, \pm B_2^+,\ldots,\pm B_j^+$ (resp.~$\pm B_1^-, \pm
  B_2^-,\ldots,\pm B_j^-$). We define $\taud(P)$ to be the set
  $\{\pi^+,\pi^-\}$.
\end{description}

\begin{example}
  Let $X$ be the element in $P_D^{(4)}(16)$ represented by 
\begin{equation}
  \label{eq:25}
 (1 \:\: 2 \:\: 3 \:\: 4)_2 \:\: (5 \:\: 6 \:\: (7 \:\: 8 \:\: 9 \:\: 10)_2 \:\: 11 \:\: 12)_1)_2^4 \:\: (13 \:\: 14 \:\: 15 \:\: 16)_2.
\end{equation}
Then $\tau_D^{(4)}(X)=\{\pi\}$, where $\pi$ is the partition represented by the
left diagram in Figure~\ref{fig:annulus1}. Let $Y$ be the element in
$P_D^{(4)}(16)$ represented by 
\begin{equation}
  \label{eq:24}
  1 \:\: 2 \:\: (3 \:\: 4 \:\: 5 \:\: 6)_1)_2^1 \:\: 7 \:\: 8)_1^2 \:\: 9 \:\: (10 \:\: 11 \:\: 12 \:\: 13)_2 \:\: 14 \:\: 15)_2^1 \:\: (16.
\end{equation}
Then $\tau_D^{(4)}(Y)=\{\pi^+,\pi^-\}$, where $\pi^+$ is the partition represented by the left diagram in Figure~\ref{fig:annulus2} and $\pi^-$ is the partition obtained from $\pi^+$ by changing $i$ to $-i$ for each $i$ on the inner circle. 
\end{example}

\begin{prop}\label{thm:20}
Let $\pi\in\nckd(n)$. Then there is a unique $(L,R,f)\in\pdk(n,1)$ such that $\pi\in\taud(L,R,f)$.
\end{prop}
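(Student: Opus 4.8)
The plan is to prove both the existence and uniqueness of the pair $(L,R,f)$ by constructing an explicit inverse to the map $\taud$ in the case $\ell=1$, much as the inverse maps were constructed for $\tau$, $\tau'$ in Section~\ref{sec:k-divis-noncr}. Given $\pi\in\nckd(n)$, I would first split into the three structural cases that parallel the three cases in the definition of $\taud$: (i) $\pi$ has a zero block; (ii) every block of $\pi$ lies entirely on the outer or inner circle (no annular block) and $\pi$ has no zero block, i.e.\ the inner circle forms the two blocks $\pm B$ forced by condition~(4); and (iii) $\pi$ has at least one annular block, i.e.\ $\pi\in\tnckd(n)$.

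In each case the construction of $(L,R,f)$ proceeds by the now-familiar peeling argument. Work on the infinite cyclic sequence \eqref{eq:18}. Repeatedly find a block of $\pi$ all of whose elements lie entirely on the outer circle and are consecutive among the not-yet-removed outer integers (such a block exists because the outer part of $\pi$, after deleting inner integers, is noncrossing); record its first and last elements as a matching pair of parentheses (adding the first to $L$ and the last to $R$), and delete it. This determines all the \emph{matched} right parentheses and their left partners; the condition that each block size is divisible by $k$ forces each such parenthesis to have size divisible by $k$. What remains on the outer circle is either empty (Cases (i) and (ii) above) or a union of annular-block ``outer traces'' $A_1,\dots,A_j$ (Case (iii)). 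In Case (i) we put no further right parenthesis, $|L|=|R|$, and a remaining integer exists since the zero block has size $\geq 2$ on the outer circle, so condition~(1) of $\pdk(n,1)$ holds. In Case (ii) we add a single unmatched right parenthesis after the last outer element $a$ of a visible block, with $f$-value $k$; this matches Case~2 of $\taud$, and condition~(2) holds. In Case (iii), for each annular block we add an unmatched right parenthesis at $\max(A_i)=a_i$ and set $f(a_i)$ to be the number of inner integers that block contributes; the total is $k$ because the inner circle has exactly $2k$ integers split evenly among the $\pm B_i$'s, giving $\sum f(a_i)=k$, so condition~(2) holds; condition~(3) on sizes follows from the mod-$k$ compatibility condition~(2) in the definition of $\nckd(n)$, which guarantees each block size is a multiple of $k$.

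One then checks that $\taud$ applied to this $(L,R,f)$ returns a set containing $\pi$: in Cases (i) and (ii) this is immediate from matching up the two descriptions; in Case (iii) the set $\taud(L,R,f)=\{\pi^+,\pi^-\}$, where $\pi^+$ and $\pi^-$ differ only by the sign convention on which of $\pm B_i$ attaches to $A_i$, and exactly one of these equals $\pi$. For uniqueness, I would argue that any $(L',R',f')\in\pdk(n,1)$ with $\pi\in\taud(L',R',f')$ must agree with the constructed one: the matched pairs of $(L',R',f')$ are forced to be precisely the edges peeled off the outer circle (since removing matched pairs of $\taud$ recovers the non-annular, non-zero blocks of $\pi$ in a unique order-independent way, exactly as in the proof of the bijectivity of $\tau$), the positions of the unmatched right parentheses are forced to be the maxima $a_i$ of the annular outer traces (or the unique $a$ in Case (ii)), and the values $f'(a_i)$ are forced by the sizes of the inner pieces of the corresponding annular blocks. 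Here the crucial input is that in $\pdk(n,1)$ we have $\ell=1$, so $R=R_1$ is a single set and there is no labeling ambiguity among the $R_i$'s; this is what makes the pair genuinely unique.

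The main obstacle I expect is Case (iii): one must verify carefully that the outer traces $A_i$ of the annular blocks, listed by increasing maximum, correspond bijectively to the arcs $X_i$ of \eqref{eq:18} cut out by the unmatched right parentheses, that $A_i=X_i\cap Y$ for the remaining set $Y$, and that the cyclic-consecutivity and mod-$k$ conditions defining $\nckd(n)$ translate exactly into the three conditions defining $\pdk(n,1)$ — in particular that the inner-circle pieces of the annular blocks are forced to be consecutive runs of the correct lengths $f(a_i)$, summing to $k$, and that the two sign choices $\pi^\pm$ are the only ambiguity, resolved by $\pi$ itself. Everything else is bookkeeping along the lines already established for $\tau$ and $\tau'$.
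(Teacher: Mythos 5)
Your construction is essentially the same as the paper's proof: the paper also builds $(L,R,f)$ by repeatedly peeling off a nonzero block of $\pi$ whose outer-circle elements are consecutive, recording a matched pair $|i|\in L$, $|i'|\in R$ when the block lies entirely on the outer circle and only an unmatched right parenthesis at $|i'|$ with $f(|i'|)$ equal to the number of inner-circle elements otherwise, and then declares that $\pi\in\taud(L,R,f)$ and that uniqueness is clear from the construction of $\taud$. Your version is somewhat more detailed than the paper's (in particular your explicit case of a partition with neither a zero block nor an annular block, which the paper's recipe passes over in silence), but the underlying argument and its level of rigor match the paper's.
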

\begin{proof}
  The uniqueness is obvious from the construction of $\taud$. We find $(L,R,f)$
  as follows. Find a nonzero block $B$ of $\pi$ such that the integers in $B$ on
  the outer circle of the annulus are consecutive in clockwise order. If $B$
  does not have an integer on the inner circle, then add $|i|$ to $L$ and $|i'|$
  to $R$ where $i$ and $i'$ are the first and the last integers among the
  consecutive integers. If $B$ has an integer on the inner circle, then we only
  add $|i'|$ to $R$, and define $f(|i'|)$ to be the number of integers of
  $B$ on the inner circle. Remove the integers in $B$ and $-B$ in $\pi$, and
  repeat this until $\pi$ has no nonzero blocks. After finishing this procedure,
  we get $L,R$ and $f$. It is easy to see that $\pi\in\taud(L,R,f)$.

\end{proof}

\begin{example}
  The unique $(L,R,f)$'s for the partitions represented by the left and right
  diagrams in Figures~\ref{fig:annulus1} and \ref{fig:annulus2} are the
  following:
  \begin{equation}
    \label{eq:23}
 (1 \:\: 2 \:\: 3 \:\: 4) \:\: (5 \:\: 6 \:\: (7 \:\: 8 \:\: 9 \:\: 10) \:\: 11 \:\: 12) \:\: (13 \:\: 14 \:\: 15 \:\: 16)
 \end{equation}
\begin{equation}
   \label{eq:26}
 1 \:\: 2 \:\: 3 \:\: 4) \:\: 5 \:\: 6 \:\: (7 \:\: 8 \:\: 9 \:\: 10) \:\: 11 \:\: 12)^4 \:\: (13 \:\: 14 \:\: 15 \:\: 16
\end{equation}
\begin{equation}
   \label{eq:27}
  1 \:\: 2)^1 \:\: (3 \:\: 4 \:\: 5 \:\: 6) \:\: 7 \:\: 8)^2 \:\: 9 \:\: (10 \:\: 11 \:\: 12 \:\: 13) \:\: 14 \:\: 15)^1 \:\: (16
\end{equation}
\begin{equation}
   \label{eq:28}
  1 \:\: 2 \:\: 3 \:\: 4 \:\: 5 \:\: 6)^1 \:\: 7 \:\: 8 \:\: 9 \:\: (10 \:\: 11 \:\: 12 \:\: 13) \:\: 14 \:\: 15)^3 \:\: (16
\end{equation}
\end{example}

Let $\ovpk(n,\ell)$ denote the set of elements $(L,R_1,\ldots,R_\ell,f)$ in $\pdk(n,\ell)$ such that $|L|<|R_1|+\cdots+|R_\ell|$. 
For $P\in\ovpk(n,\ell)$ and $\epsilon\in\{-1,1\}$, we define a multichain $\pi_1\leq\pi_2\leq\cdots\leq\pi_\ell$ in $\nckd(n)$ as follows.

Let $P=(L,R_1,\ldots,R_\ell, f)$. Consider the doubly labeled parenthesization
representing $P$ as shown in \eqref{eq:22}. Then we define $P_1,P_2,\ldots,P_n$
as follows. Let $P_1=P$. For $2\leq i\leq
n$, $P_i$ is obtained from $P_{i-1}$ by removing all the right parentheses in
$R_{i-1}$ and their corresponding left parentheses, if they exist. If unmatched right
parentheses are removed, then their $f$ values are added to the next unmatched
right parentheses in cyclic order, if they exist. For instance, if $P_1$ is
$$1)_2^3\:\: 2\:\: 3)_1^2\:\: 4\:\: 5)_1^1\:\: 6\:\: (7\:\: 8)_1)_2^1\:\: 9\:\: 10)_1^2\:\: 11\:\: (12)_2\:\:,$$
then, in $P_2$ the $f$ values of $3,5\in R_1$ are added to the $f$ value of $8\in R_2$, and the $f$ value of $10\in R_1$ is added to the $f$ value of $1\in R_2$. Thus $P_2$ is
$$1)_2^5\:\: 2\:\: 3\:\: 4\:\: 5\:\: 6\:\: 7\:\: 8)_2^4\:\: 9\:\: 10\:\: 11\:\: (12)_2\:\:.$$
Now consider the sequence of sets
$\tau_D(P_1),\tau_D(P_2),\ldots,\tau_D(P_\ell)$. Observe that at least one of
these sets contains two elements because $P\in\ovpk(n,\ell)$. 

Let $i$ be the smallest integer such that $\tau_D(P_i)$ has two elements. Then
$\tau_D(P_i)=\{\pi^+,\pi^-\}$. Let $\pi_1,\pi_2,\ldots,\pi_{i-1}$ be the unique
elements in $\tau_D(P_1)$, $\tau_D(P_2),\ldots$, $\tau_D(P_{i-1})$
respectively. Let $\pi_i=\pi^+$ if $\epsilon=1$; and $\pi_i=\pi^-$ if
$\epsilon=-1$. There is a unique sequence $\pi_{i+1},\pi_{i+2},\ldots,\pi_\ell$
of elements in $\tau_D(P_{i+1})$, $\tau_D(P_{i+2}),\ldots$, $\tau_D(P_{\ell})$
respectively such that $\pi_i\leq\pi_{i+1}\leq\cdots\leq\pi_\ell$. We define
$\tau_D'(P,\epsilon)$ to be the multichain
$\pi_1\leq\pi_2\leq\cdots\leq\pi_\ell$ in $\nckd(n)$. Note that we have
$\pi_j\in\tnckd(n)$ for at least one $j\in[\ell]$.

\begin{remark}
Our map $\tau_D'$ is a generalization of the map of Athanasiadis and Reiner for $\ncd(n)$ defined in \cite[Proposition 4.3]{Athanasiadis2005}. 
\end{remark}

\begin{example}
  If $P$ is \eqref{eq:25} (resp.~\eqref{eq:24}), then $\tau_D'(P)$ is the multichain consisting of two elements in Figure~\ref{fig:annulus1} (resp.~Figure~\ref{fig:annulus2}). 
\end{example}

\begin{prop}
  \label{thm:19}
The map $\tau_D'$ is a bijection between $\ovpk(n,\ell)\times\{-1,1\}$ and the set of multichains $\pi_1\leq\pi_2\leq\cdots\leq\pi_\ell$ in $\nckd(n)$ with $\pi_j\in\tnckd(n)$ for at least one $j\in[\ell]$. Moreover, if $P=(L,R_1,\ldots,R_\ell,f)$ and $\tau_D'(P,\epsilon)$ is the multichain
$\pi_1\leq\pi_2\leq\cdots\leq\pi_\ell$ with rank jump vector $(s_1,s_2,\ldots,s_{\ell+1})$, then $\typek(P)=\typek(\pi_1)$ and
$(|R_1|,|R_2|,\ldots,|R_\ell|)=(s_2,s_3,\ldots,s_{\ell+1})$. 
\end{prop}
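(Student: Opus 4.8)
The plan is to mirror the proof of Proposition~\ref{thm:4}: construct the inverse of $\taud'$ by ``peeling off'' one level of the multichain at a time, after first pinning down the structure of a multichain in the target set. So let $\mcc:\pi_1\le\cdots\le\pi_\ell$ be a multichain in $\nckd(n)$ with $\pi_j\in\tnckd(n)$ for at least one $j$. The structural facts I would establish first are: (i) a partition in $\nckd(n)$ with a zero block has no annular block, since the zero block occupies the whole inner circle; and (ii) coarsening preserves ``having a zero block'' and, in the absence of a zero block, ``having an annular block'', because the block of $\pi_{j+1}$ containing a given block of $\pi_j$ is again symmetric in the first case, and still meets both circles and is still nonzero in the second. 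Together these force that no $\pi_j$ has a zero block (otherwise $\pi_\ell$ would, contradicting that some $\pi_j$ is annular), and that $J=\{j:\pi_j\in\tnckd(n)\}$ is an up-set, say $J=\{i,i+1,\ldots,\ell\}$. For $j<i$, the fourth defining condition of $\nckd(n)$ then says that the inner circle of $\pi_j$ is split into two blocks $\pm B$ determined by the outer blocks.

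Next I would build the inverse map. For each $j$, Proposition~\ref{thm:20} gives the unique $(L^{(j)},R^{(j)},f^{(j)})\in\pdk(n,1)$ with $\pi_j\in\taud(L^{(j)},R^{(j)},f^{(j)})$; since $\pi_j$ has no zero block this triple has $|L^{(j)}|<|R^{(j)}|$, so it lies in $\ovpk(n,1)$, and $\taud$ is evaluated by Case~2 (a single unmatched right parenthesis of $f$-value $k$) when $j<i$, and by Case~3 (yielding two elements $\{\pi_j^+,\pi_j^-\}$) when $j\ge i$. Put $\epsilon=1$ if $\pi_i=\pi_i^+$ and $\epsilon=-1$ otherwise. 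Because $\pi_j\le\pi_{j+1}$, every block of $\pi_j$ lies in a block of $\pi_{j+1}$, so the doubly labelled parenthesization of $\pi_{j+1}$ arises from that of $\pi_j$ by deleting the matched pairs that get absorbed and merging the absorbed unmatched right parentheses while transferring their $f$-values forward --- exactly the passage $P_j\mapsto P_{j+1}$ in the definition of $\taud'$. Reversing it, I would set $P_\ell:=(L^{(\ell)},R^{(\ell)},f^{(\ell)})$ with its right-parenthesis set renamed $R_\ell$, and for $j=\ell-1,\ldots,1$ form $P_j$ from $P_{j+1}$ by reinserting the left parentheses present in the parenthesization of $\pi_j$ but not in $P_{j+1}$, together with the matching right parentheses labelled $j$ (collected into $R_j$), and redistributing the accumulated $f$-values according to $f^{(j)}$. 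Setting $(L,R_1,\ldots,R_\ell,f):=P_1$, one checks it lies in $\ovpk(n,\ell)$: the right-parenthesis sizes are divisible by $k$ because each $\pi_j\in\nckd(n)$, the identity $\sum_{x\in R'}f(x)=k$ is inherited from the single-level data of any $\pi_j$ with $j\ge i$, and $|L|<|R|$ since some $\pi_j$ is annular. Then $\taud'(P_1,\epsilon)=\mcc$ by construction, and applying $\taud'$ and then this peeling is visibly the identity, giving the bijection.

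For the ``moreover'' part I would track the statistics along the construction: since $\pi_1$ is the chosen element of $\taud(P_1)$ and Case~3 of $\taud$ matches each right parenthesis of size $ki$ with an unordered pair $(B,-B)$ of nonzero blocks of $\pi_1$ of size $ki$, we get $\typek(P_1)=\typek(\pi_1)$; and using $\rank=n-\nz$ on $\nckd(n)$ together with the fact that for a single-level triple in $\ovpk$ the associated partition has $\nz$ equal to the number of its right parentheses, one gets $\nz(\pi_j)=|R_j|+|R_{j+1}|+\cdots+|R_\ell|$, hence $\rank(\pi_{j+1})-\rank(\pi_j)=|R_j|$ for $1\le j\le\ell-1$ and $n-\rank(\pi_\ell)=|R_\ell|$, i.e.\ $(|R_1|,\ldots,|R_\ell|)=(s_2,\ldots,s_{\ell+1})$. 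I expect the technical heart to be the middle step --- checking that the peeling construction is well defined at every stage and genuinely lands in $\ovpk(n,\ell)$, in particular that the $f$-value redistribution is forced and compatible with applying Proposition~\ref{thm:20} level by level, and that the Case~2 / Case~3 dichotomy of $\taud(P_j)$ matches the up-set $J$ found in the first step; the structural first step and the statistic bookkeeping are comparatively routine.
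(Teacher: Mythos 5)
Your overall strategy---invert $\taud'$ by applying Proposition~\ref{thm:20} level by level and then peeling the single-level data back into one element of $\ovpk(n,\ell)$---is exactly the paper's, and the peeling construction you describe coincides with the one in the paper. But your preliminary structural analysis contains a genuine error that your argument then leans on. You claim that no $\pi_j$ in the multichain has a zero block and hence that $J=\{j:\pi_j\in\tnckd(n)\}$ is an up-set $\{i,\ldots,\ell\}$. This is false: the maximum element $\hat1$ of $\nckd(n)$ is a single zero block, so a multichain such as $\pi_1\leq\hat1$ with $\pi_1$ annular lies in the target set while its top element has a zero block. The flaw is in your step (ii): the block of $\pi_{j+1}$ containing an annular block of $\pi_j$ does meet both circles, but it need not be nonzero---it can be (and in the example is) the zero block of $\pi_{j+1}$. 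In truth $J$ is an interval $\{i,\ldots,m\}$: below it $\taud$ is evaluated by Case~2, on it by Case~3, and above it by Case~1 (zero block). Consequently your assertions that every $(L^{(j)},R^{(j)},f^{(j)})$ lies in $\ovpk(n,1)$ (false when $\pi_j$ has a zero block, since then $|L^{(j)}|=|R^{(j)}|$) and that Case~3 applies for all $j\ge i$ do not hold.

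The good news is that the error is in the justification rather than in the construction: Proposition~\ref{thm:20} applies uniformly to all of $\nckd(n)$, the choice of $\epsilon$ only uses the \emph{smallest} annular index (which does exist and does have $|L^{(i)}|<|R^{(i)}|$), and $\pi_1$ cannot have a zero block (else none of the $\pi_j$ would be annular), so $f=f^{(1)}$ still satisfies $\sum_{x\in R'}f(x)=k$ and $|L|<|R|$. What you must repair is the compatibility claim between consecutive levels: when passing from an annular level to a zero-block level, the unmatched right parentheses disappear entirely and their $f$-values are simply discarded (this is the ``if they exist'' clause in the definition of $\taud'$), rather than ``transferred forward'' as you state. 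With the up-set claim deleted and the three-case trichotomy (interval of annular levels, Case~2 below, Case~1 above) acknowledged, your argument becomes the paper's proof.
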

\begin{proof}
  We will find the inverse map of $\tau_D'$. Let
  $\pi_1\leq\pi_2\leq\cdots\leq\pi_\ell$ be a multichain in $\nckd(n)$ with
  $\pi_j\in\tnckd(n)$ for at least one $j\in[\ell]$. By
  Proposition~\ref{thm:20}, there is a unique element
  $(L_i',R_i',f_i)\in\pdk(n,1)$ with $\pi_i\in \tau_D(L_i',R_i',f_i)$ for each
  $i\in[\ell]$. Let $j$ be the smallest integer such that
  $\pi_j\in\tnckd(n)$. Then $\pi_j\in\tau_D(L_j',R_j',f_j)=\{\pi^+,\pi^-\}$. We
  define $\epsilon=1$ if $\pi_j=\pi^+$ and $\epsilon=-1$ otherwise.

  Now we define $P=(L,R_1,R_2,\ldots,R_\ell,f)\in\ovpk(n,\ell)$ as
  follows. First we set $L=L_\ell'$ and $R_\ell=R_\ell'$. Then define
  $R_{\ell-1}$ to be the set of right parentheses in $R_{\ell-1}'$ whose
  corresponding left parentheses, if they exist, are not already in $L$. We add
  these corresponding left parentheses to $L$. Repeat this procedure until we
  get $L,R_1,R_2,\ldots,R_\ell$. Note that the unmatched right parentheses of
  $R=R_1\uplus R_2\uplus \cdots \uplus R_\ell$ are exactly those of $R_1'$. Let
  $f=f_1$.

It is straightforward to check that the map sending $\pi_1\leq\pi_2\leq\cdots\leq\pi_\ell$ to $(P,\epsilon)$ is the inverse map of $\tau_D'$. The `moreover' statement is obvious from the construction of $\tau_D'$. 
\end{proof}

\begin{lem}\label{thm:11}
Let $b,b_1,b_2,\ldots,b_n$ and $s_1,s_2,\ldots,s_{\ell+1}$ be nonnegative integers satisfying $\sum_{i=1}^n b_i=b$, $\sum_{i=1}^n i\cdot b_i=n$, $\sum_{i=1}^{\ell+1} s_i = n$ and $s_1=n-b$. Then the number of multichains $\pi_1\leq \pi_2\leq \cdots \leq \pi_{\ell}$ in $\nckd(n)$ with rank jump vector $(s_1,s_2,\ldots,s_{\ell+1})$ and $\typek(\pi_1)=(b;b_1,b_2,\ldots,b_n)$ such that $\pi_j\in\tnckd(n)$ for at least one $j\in[\ell]$ is equal to
$$2\binom{b}{b_1,b_2,\ldots,b_n} \binom{k(n-1)}{s_2} \cdots \binom{k(n-1)}{s_{\ell+1}}. $$
\end{lem}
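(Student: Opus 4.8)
The plan is to obtain the count directly from the bijection $\tau_D'$ of Proposition~\ref{thm:19} combined with the enumeration of doubly labeled cyclic parenthesizations in Lemma~\ref{thm:17}, so that the lemma becomes essentially a bookkeeping exercise once the machinery of this section is in place.

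First I would note that the multichains to be counted are exactly those of the form $\tau_D'(P,\epsilon)$ for $(P,\epsilon)\in\ovpk(n,\ell)\times\{-1,1\}$, since Proposition~\ref{thm:19} identifies this set with the multichains $\pi_1\leq\cdots\leq\pi_\ell$ in $\nckd(n)$ having $\pi_j\in\tnckd(n)$ for at least one $j$. By the ``moreover'' part of that proposition, if $\tau_D'(P,\epsilon)$ has rank jump vector $(s_1,\ldots,s_{\ell+1})$ then $\typek(\pi_1)=\typek(P)$ and $(|R_1|,\ldots,|R_\ell|)=(s_2,\ldots,s_{\ell+1})$. Hence the multichains in the statement correspond bijectively to the pairs $(P,\epsilon)$ with $\typek(P)=(b;b_1,b_2,\ldots,b_n)$ and $(|R_1|,\ldots,|R_\ell|)=(s_2,\ldots,s_{\ell+1})$. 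Here I would check that the hypothesis $s_1=n-b$ imposes no further restriction: since $\sum_{i=1}^n i\cdot b_i=n$, a partition $\pi_1\in\nckd(n)$ with $\typek(\pi_1)=(b;b_1,\ldots,b_n)$ uses up all $kn$ positive integers in its nonzero blocks and so has no zero block, whence $\rank(\pi_1)=n-\nz(\pi_1)=n-b=s_1$.

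Next I would count the pairs $(P,\epsilon)$. The two choices of $\epsilon\in\{-1,1\}$ account for the factor $2$, so it remains to count $P=(L,R_1,\ldots,R_\ell,f)\in\ovpk(n,\ell)$ with $\typek(P)=(b;b_1,\ldots,b_n)$ and $(|R_1|,\ldots,|R_\ell|)=(s_2,\ldots,s_{\ell+1})$. Because $\sum_{i=1}^n i\cdot b_i=n$, any element of $\pdk(n,\ell)$ with this $k$-type automatically has $|L|<|R|$ (by the remark following the definition of $\typek(L,R_1,\ldots,R_\ell,f)$) and hence already lies in $\ovpk(n,\ell)$; so the count over $\ovpk(n,\ell)$ equals the count over $\pdk(n,\ell)$. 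I would then apply Lemma~\ref{thm:17} with $c_i=s_{i+1}$, whose hypotheses are satisfied: $\sum_{i=1}^n b_i=b$, $\sum_{i=1}^{\ell}c_i=\sum_{i=2}^{\ell+1}s_i=n-s_1=b$, and $\sum_{i=1}^n i\cdot b_i=n$ satisfies both $\leq n$ and $\neq n-1$. This gives $\binom{b}{b_1,b_2,\ldots,b_n}\binom{k(n-1)}{s_2}\cdots\binom{k(n-1)}{s_{\ell+1}}$ admissible $P$, and multiplying by $2$ yields the claimed formula.

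I do not expect a serious obstacle here: the structural content has been front-loaded into Propositions~\ref{thm:20} and \ref{thm:19} and into Lemma~\ref{thm:17}. The only point requiring care is the compatibility of constraints in the second paragraph above — that $s_1=n-b$ together with $\sum i\cdot b_i=n$ forces the absence of a zero block and forces $P\in\ovpk(n,\ell)$ — and the verification that the hypotheses of Lemma~\ref{thm:17} hold for the parameters at hand. If any delicacy remains, it lies in trusting that $\tau_D'$ preserves $\typek$ and matches $(|R_i|)$ with $(s_{i+1})$ exactly as asserted, which is precisely the ``moreover'' statement of Proposition~\ref{thm:19}.
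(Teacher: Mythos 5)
Your proposal is correct and follows essentially the same route as the paper: apply Proposition~\ref{thm:19} to reduce to counting pairs $(P,\epsilon)$ with the prescribed $k$-type and $(|R_1|,\ldots,|R_\ell|)=(s_2,\ldots,s_{\ell+1})$, note that $\sum_i i\cdot b_i=n$ forces $\ovpk(n,\ell)$ and $\pdk(n,\ell)$ to coincide for this type, and conclude by Lemma~\ref{thm:17}. Your extra verifications (no zero block, hypotheses of Lemma~\ref{thm:17}) are correct and only make explicit what the paper leaves implicit.
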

\begin{proof}
  By Proposition~\ref{thm:19}, the number of such multichains is equal to $2$ times the number of $P=(L,R_1,\ldots,R_\ell,f)\in\ovpk(n,\ell)$ with $\typek(P)=(b;b_1,b_2,\ldots,b_n)$ and $(|R_1|,|R_2|,\ldots,|R_\ell|)=(s_2,s_3,\ldots,s_{\ell+1})$. Thus we are done by Lemma~\ref{thm:17}. Note that by the condition $\sum_{i=1}^n i\cdot b_i=n$, if $\typek(P)=(b;b_1,b_2,\ldots,b_n)$, then $P\in\ovpk(n,\ell)$ is equivalent to $P\in\pdk(n,\ell)$.
\end{proof}

Now we can prove Theorem~\ref{thm:kratt3}.

\begin{proof}[Proof of Theorem~\ref{thm:kratt3}]
  Let $\pi_1\leq \pi_2\leq \cdots \leq \pi_{\ell}$ be a multichain satisfying
  the conditions.

  Assume that $b_1+2b_2+\cdots+n b_n \leq n-2$.   Then $\pi_1$ has
  a zero block and $\pi_j\in\tnckd(n)$ for each $j\in[\ell]$. Let $\pi_j'$ be
  the element in $\nckb(n-1)$ obtained from $\pi_j$ by removing all the integers
  on the inner circle. Then $\pi_1'\leq \pi_2'\leq \cdots\leq \pi_{\ell}'$ is a
  multichain in $\nckb(n-1)$ with rank jump vector
  $(s_1-1,s_2,\ldots,s_{\ell+1})$ and $\typek(\pi_1')=(b;b_1,\ldots,b_n)$. Thus
  by Theorem~\ref{thm:kratt2}, the number of such
  multichains is equal to
$$\binom{b}{b_1,b_2,\ldots,b_n} \binom{k(n-1)}{s_2} \cdots \binom{k(n-1)}{s_{\ell+1}}.$$

Now assume that $b_1+2b_2+\cdots+n b_n = n$.  By Lemma~\ref{thm:11}, the number
of multichains with $\pi_j\in\tnckd(n)$ for at least one $j\in[\ell]$ is equal
to
$$2\binom{b}{b_1,b_2,\ldots,b_n} \binom{k(n-1)}{s_2} \cdots \binom{k(n-1)}{s_{\ell+1}}. $$
The remaining case is that $\pi_j\not\in\tnckd(n)$ for all $j\in[\ell]$. Let
$\pi_j'$ be the element in $\nckb(n-1)$ obtained from $\pi_j$ by removing all
the integers on the inner circle. Then $\pi_1'\leq \pi_2'\leq \cdots\leq
\pi_{\ell}'$ is a multichain in $\nckb(n-1)$.  Let $i$ be the index of this
chain, i.e.~the smallest integer such that $\pi_i'$ has a zero block. Since
$b_1+2b_2+\cdots+n b_n = n$, we must have $i\geq2$. The rank jump vector of
$\pi_1'\leq \pi_2'\leq \cdots\leq \pi_{\ell}'$ is
$(s_1,\ldots,s_i-1,\ldots,s_{\ell+1})$ and
$\typek(\pi_1)=(b-1;b_1-1,\ldots,b_n)$. By Lemma~\ref{thm:atha}, the number of
such multichains is equal to
$$\frac{s_i-1}{b-1} \binom{b-1}{b_1-1,b_2,\ldots,b_n} \binom{k(n-1)}{s_2} \cdots \binom{k(n-1)}{s_i-1}\cdots\binom{k(n-1)}{s_{\ell+1}}.$$
Summing the above formula for $i=2,3,\ldots,\ell+1$, we finish the proof.
\end{proof}

\section*{Acknowledgement}

The author is grateful to Christian Krattenthaler for providing the correct
definition of $\nckd(n)$. He is also grateful to the anonymous referees for
their careful reading and helpful comments.

\bibliographystyle{abbrv}

\begin{thebibliography}{10}

\bibitem{Armstrong}
D.~Armstrong.
\newblock Generalized noncrossing partitions and combinatorics of {C}oxeter
  groups.
\newblock {\em Mem. Amer. Math. Soc.}, 2002:no. 949, 2009.

\bibitem{Athanasiadis1998}
C.~A. Athanasiadis.
\newblock On noncrossing and nonnesting partitions for classical reflection
  groups.
\newblock {\em Electron. J. Combin.}, 5:R42, 1998.

\bibitem{Athanasiadis2005}
C.~A. Athanasiadis and V.~Reiner.
\newblock Noncrossing partitions for the group ${D}_n$.
\newblock {\em SIAM J. Discrete Math.}, 18(2):397--417, 2005.

\bibitem{Bessis2003}
D.~Bessis.
\newblock The dual braid monoid.
\newblock {\em Annales scientifiques de l'Ecole normale sup{\'e}rieure},
  36:647--683, 2003.

\bibitem{Brady2008}
T.~Brady and C.~Watt.
\newblock Non-crossing partition lattices in finite reflection groups.
\newblock {\em Trans. Amer. Math. Soc.}, 360:1983--2005, 2008.

\bibitem{Edelman1980}
P.~H. Edelman.
\newblock Chain enumeration and non-crossing partitions.
\newblock {\em Discrete Math.}, 31:171--180, 1980.

\bibitem{Good1960}
I.~J. Good.
\newblock Generalizations to several variables of {L}agrange's expansion, with
  applications to stochastic processes.
\newblock {\em Proc. Cambridge Philos. Soc.}, 56:367--380, 1960.

\bibitem{KimNoncrossing2}
J.~S. Kim.
\newblock New interpretations for noncrossing partitions of classical types.
\newblock \url{http://arxiv.org/abs/0910.2036}.

\bibitem{KrattAnnulus}
C.~Krattenthaler.
\newblock Non-crossing partitions on an annulus.
\newblock in preparation.

\bibitem{KrattMuller}
C.~Krattenthaler and T.~W. M{\"u}ller.
\newblock Decomposition numbers for finite {C}oxeter groups and generalised
  non-crossing partitions.
\newblock {\em Trans. Amer. Math. Soc.}, 362(5):2723--2787, 2010.

\bibitem{Kreweras1972}
G.~Kreweras.
\newblock Sur les partitions non crois{\'e}es d'un cycle.
\newblock {\em Discrete Math.}, 1:333--350, 1972.

\bibitem{Reiner1997}
V.~Reiner.
\newblock Non-crossing partitions for classical reflection groups.
\newblock {\em Discrete Math.}, 177:195--222, 1997.

\end{thebibliography}

\end{document}